\theoremstyle{plain}
\newtheorem{theorem}{Theorem}[section]
\newtheorem{lemma}[theorem]{Lemma}
\newtheorem{proposition}[theorem]{Proposition}
\newtheorem{corollary}[theorem]{Corollary}
\theoremstyle{definition}
\newtheorem{definition}[theorem]{Definition}
\newtheorem{remark}[theorem]{Remark}
\newtheorem{example}[theorem]{Example}
\newtheorem{problem}[theorem]{Problem}
\newcommand{\Y}{\mathbb Y}
\newcommand{\C}{\mathbb C}
\newcommand{\R}{\mathbb R}
\newcommand{\Z}{\mathbb Z}
\newcommand{\F}{\mathbb F}
\newcommand{\FF}{\wt\F}
\newcommand{\Ga}{\Gamma}
\newcommand{\Om}{\Omega}
\newcommand{\Si}{\Sigma}
\newcommand{\al}{\alpha}
\newcommand{\be}{\beta}
\newcommand{\ga}{\gamma}
\newcommand\de{\delta}
\newcommand{\la}{\lambda}
\newcommand{\eps}{\epsilon}
\newcommand{\vp}{\varphi}
\newcommand{\si}{\sigma}
\renewcommand{\th}{\theta}
\newcommand{\om}{\omega}
\newcommand{\Ran}{\operatorname{Ran}}
\newcommand{\rk}{\operatorname{rk}}
\newcommand{\Sym}{\operatorname{Sym}}
\newcommand{\HL}{{\operatorname{HL}}}
\newcommand{\Nil}{\operatorname{Nil}}
\newcommand{\NSin}{\operatorname{NSin}}
\newcommand{\Rad}{\operatorname{Rad}}
\newcommand{\Mat}{\operatorname{Mat}}
\newcommand{\Lat}{\operatorname{Lat}}
\newcommand{\ex}{\operatorname{ex}}
\newcommand{\Herm}{\operatorname{Herm}}
\newcommand\Cyl{\operatorname{Cyl}}
\newcommand\Planch{{\operatorname{Planch}}}
\newcommand\Haar{{\operatorname{Haar}}}
\newcommand\colsp{\operatorname{colsp}}
\newcommand\rowsp{\operatorname{rowsp}}
\newcommand{\wt}{\widetilde}
\newcommand{\wh}{\widehat}
\newcommand{\M}{\mathcal P}
\newcommand{\B}{\mathcal B}
\newcommand{\g}{\mathfrak g}
\newcommand{\gl}{\mathfrak{gl}}
\newcommand{\uu}{\mathfrak u}
\newcommand{\La}{\mathbb L}
\newcommand{\e}{\mathscr E}
\renewcommand{\o}{\mathscr O}
\newcommand{\Li}{L}
\newcommand{\T}{\mathcal T}
\newcommand{\PGL}{\M^{GL(\infty, q)}}
\newcommand{\PUeven}{\M^{U(2\infty, q^2)}}
\newcommand{\PUodd}{\M^{U(2\infty+1, q^2)}}
\newcommand{\GLB}{\mathbb{GLB}}
\newcommand{\UB}{\mathbb{UB}}
\newcommand{\Tr}{\operatorname{Tr}}
\newcommand\ccdot{\,\cdot\,}
\newcommand\glinfty{{GL(\infty,q)}}
\newcommand\m{\boldsymbol{m}}
\newcommand\Harm{\mathcal{H}}
\numberwithin{equation}{section}
\begin{document}

\title[]{Infinite-dimensional groups over finite fields and Hall-Littlewood symmetric functions}

\author{Cesar Cuenca and Grigori Olshanski}

\date{}

\begin{abstract}

The groups mentioned in the title are certain matrix groups of infinite size with elements in a finite field $\F_q$. They are built from finite classical groups and at the same time they are similar to reductive $p$-adic Lie groups. In the present paper, we initiate the study of invariant measures for the coadjoint action of these infinite-dimensional groups. Of special interest for us are ergodic invariant measures, which are a substitute of orbital measures.

We examine first the group $\GLB$, a topological completion of the inductive limit group $\varinjlim GL(n,\F_q)$. The traceable factor representations of $\GLB$ were studied by Gorin, Kerov, and Vershik  \cite{GKV}. We show that there exists a parallel theory for ergodic coadjoint-invariant measures, which is linked with harmonic functions on the ``HL-deformed Young graph''  $\Y^\HL(t)$. Here the deformation means that the edges of the Young graph $\Y$ are endowed with certain formal multiplicities depending on the Hall-Littlewood (HL) parameter $t$ specialized to $q^{-1}$.

This fact serves as a prelude to our main results, which concern topological completions of two inductive limit groups built from finite unitary groups, of even and odd dimension. We show that in this case, coadjoint-invariant measures are linked to some new branching graphs. They are still related to the HL symmetric functions, but in a nonstandard way, and the HL  parameter $t$ takes now the negative value $-q^{-1}$. As an application, we find several families of unitarily invariant measures, including analogues of the Plancherel measure.

\end{abstract}

\maketitle

\tableofcontents

\newpage

\section{Introduction}\label{sect1}

\subsection{Setting of the problem}\label{sect1.1}
Fix  a finite field $\F_q$ with $q$ elements and let $\{G(n)\}$ be any of the classical series of finite groups of Lie type over $\F_q$. The basic example is that of general linear groups $G(n)=GL(n,q):=GL(n,\F_q)$, but one can also consider the unitary, orthogonal or symplectic groups over a finite field. Such groups $G(n)$ form a nested chain, so one can form the inductive limit group $G:=\varinjlim G(n)$. The ``infinite-dimensional groups over $\F_q$''  mentioned in the title are certain topological completions $\overline G\supset G$ of such inductive limit groups. 

In the case of general linear groups, the definition of the topological group $\overline G$ is given in the work  \cite{GKV} by Gorin, Kerov, and Vershik. This group, denoted in \cite{GKV} as $\GLB$, is  formed by the infinite size matrices $g=[g_{ij}]_{i,j=1}^\infty$ over $\F_q$ which have finitely many nonzero entries below the diagonal and are invertible. It is a locally compact, separable, totally disconnected topological group with respect to  a natural nondiscrete topology. That topology is uniquely defined by the condition that the subgroup $\mathbb{B}\subset\GLB$ of upper triangular matrices (which is a profinite group) is an open compact subgroup. 

For the other classical series, the corresponding groups $\overline G$ are defined in a similar way. 
Just like it is the case for Lie groups, one can define for them natural analogues of a Lie algebra, adjoint and coadjoint action.

For example, consider the case $\overline G=\GLB$. Then the corresponding Lie algebra $\bar\g$ is formed by the infinite size matrices $X=[X_{ij}]_{i,j=1}^\infty$ over $\F_q$ with finitely many nonzero entries below the diagonal. It is a locally compact vector space over $\F_q$: the topology is defined so that the subspace of upper triangular matrices (which is a profinite group under addition) is an open compact subgroup. Next, let $\bar\g^*$ be the set of infinite size matrices $Y=[Y_{ij}]_{i,j=1}^\infty$ with a modified finiteness condition: $Y$ must have finitely many nonzero elements both below and on the diagonal. The topology on $\bar\g^*$ is defined in a similar way to the topology of $\bar\g$, but now the subspace of \emph{strictly} upper triangular matrices is an open compact subgroup. The topological vector spaces $\bar\g$ and $\bar\g^*$  are dual to each over: the duality between them is given by the bilinear map
$$
\bar\g\times\bar\g^*\to\F, \qquad (X,Y)\mapsto \Tr(XY).
$$
They are also dual to each other as commutative locally compact groups. Finally, note that the group $\overline G=\GLB$ acts on $\bar\g$ and on $\bar\g^*$ by conjugation, and these are the adjoint and coadjoint actions in question. Again, this definition can be extended to the other series.

We raise the following problem.

\begin{problem}\label{problem1.A}
Let $\overline G$ and $\bar\g^*$ be as above.  Study the invariant Radon measures for the coadjoint action of $\overline G$ on $\bar\g^*$. In particular, describe the ergodic invariant Radon measures. 
\end{problem}

Recall that a Radon measure on a locally compact space is a possibly infinite measure taking finite values on compact subsets. In our situation, all invariant measures turn out to be infinite, with a trivial exception. Note also that in the formulation of the problem, the topological group $\overline G$ can be replaced by its countable subgroup $G\subset \overline G$ (the inductive limit subgroup $\varinjlim G(n)$) ---  the set of invariant measures will be the same. 

\begin{remark}\label{remark1.A}

(a) With the only exception of the delta measure at $\{0\}$, the $\overline G$-invariant Radon measures on $\bar\g^*$ are infinite measures and, in particular, cannot be probability measures. Despite this, there are some points of contacts between our theory and Fulman's probabilistic theory of random matrices over finite fields (see his survey paper \cite{F-2002} and references therein). In particular, in both cases, Hall--Littlewood symmetric functions play a fundamental role. 

(b) In the case of a compact group action on a locally compact space, ergodic invariant measures are the same as orbital measures (that is,  invariant measures concentrated on the orbits). Our situation is very different. Namely, there are plenty of orbital measures, related to arbitrary $\overline G$-orbits in $\bar\g^*$; they are all ergodic, but typically fail to be Radon measures. Thus, our ergodic invariant Radon measures are typically not orbital measures. Unlike invariant Radon measures, orbital measures seem to be bad objects that do not admit a reasonable classification.

(c) The Radon condition that we impose on our measures makes it possible to define their Fourier transforms (see Section \ref{sect10}). This is important for the reason explained in subsection \ref{sect1.2} below.

\end{remark}

\subsection{Motivation}\label{sect1.2}

Problem \ref{problem1.A} is prompted by the work of Gorin-Kerov-Vershik \cite{GKV} on unitary representations of the group $\GLB$ (see also the announcements \cite{VK-1998}, \cite{VK-2007}). Since $\GLB$ is not a type I group, classifying its irreducible representations is a wild problem. However, there is a reasonable substitute of irreducible representations --- these are the traceable factor representations. The latter are defined by the indecomposable traces on the subalgebra $\mathcal A(\GLB)\subset L^1(\GLB)$ formed by the locally constant, compactly supported functions on $\GLB$.  

One of the main results of \cite{GKV} is a solution of the classification problem for the indecomposable traces on $\mathcal A(\GLB)$. Here is its brief description: 

(1) the whole set of indecomposable traces can be partitioned into countably many families;

(2) each family is in a natural one-to-one correspondence with the set $\Harm_+(\Y)$ of nonnegative harmonic functions on the Young graph $\Y$.

By definition, the vertex set of the graph $\Y$ (denoted by the same symbol $\Y$) consists of Young diagrams, and the edges are formed by  pairs $\mu\subset \la$ of Young diagrams which differ by a single box (then we write $\mu\nearrow\la)$. The elements of $\Harm_+(\Y)$ are the functions $\vp:\Y\to\R_{\ge0}$ subject to the harmonicity condition
\begin{equation}\label{eq1.B}
\vp(\mu) = \sum_{\la\in\Y:\, \mu\nearrow\la}\vp(\la), \quad \mu\in\Y.
\end{equation}
From this equation, it is seen that $\Harm_+(\Y)$ is a convex cone, and a general theorem asserts that it is isomorphic to the cone of finite measures on a certain space that is associated with the graph and called its \emph{boundary}. The explicit description of the boundary of $\Y$ is known: it is an infinite-dimensional compact space (the Thoma simplex). This is a classical result (equivalent to Thoma's theorem about finite factor representations of the infinite symmetric group). It follows that the indecomposable functions $\vp\in\Harm_+(\Y)$ (that is, the elements of the extreme rays of the cone) correspond to the points of the Thoma simplex, which in turn depend on countably many continuous parameters. This finally leads to an explicit parametrization of the indecomposable traces on $\mathcal A(\GLB)$. 

The work \cite{GKV} raises a number of open problems, one of which is the study of the traces on $\mathcal A(\overline G)$ for other classical series.  

Problem \ref{problem1.A} is in fact a variation of that problem. To see this, we remark that if elements of $\mathcal A(\overline G)$ are treated as test functions, then the traces are precisely the positive definite distributions on $\overline G$, invariant under the action of $\overline G$ by inner automorphisms. On the other hand, the Fourier transforms of the coadjoint-invariant Radon measures on $\bar\g^*$ are precisely the positive definite distributions on the vector space $\bar\g$, invariant under the adjoint action of $\overline G$. In this interpretation, an analogy between the two kinds of objects becomes apparent (in particular, ergodic invariant measures can be treated as a counterpart of indecomposable traces). 
This is a manifestation of a parallelism between problems referring to Lie groups (linked to characters) and to Lie algebras (linked to conjugacy classes), which arises in a great variety of situations, see e.g. \cite{K2}.

It seems that the study of $\overline G$-invariant measures on $\bar \g^*$  can be both easier and more difficult than the study of traces on $\mathcal A(\overline G)$. On the one hand, in contrast to irreducible characters of finite classical groups, the parametrization of conjugacy classes in all finite-dimensional classical Lie algebras over a finite field is achieved by tools of linear algebra (Wall \cite{Wa}, Burgoyne--Cushman \cite{BC}). On the other hand, as can be seen from the comparison of \eqref{eq1.B} and \eqref{eq1.C}, in the case of invariant measures, some of the combinatorial structures that arise (namely the harmonic functions) may be more involved. 

In finite dimensions, the Fourier transforms of invariant functions on  reductive Lie algebras over finite fields were investigated in a number of works, starting with  Springer's note \cite{Springer}  (see e.g. Lehrer \cite{Lehrer}, Letellier \cite{Letellier}, and references therein). We believe that the infinite-dimensional case opens new perspectives in this direction: the distributions on $\bar\g$ obtained as the Fourier transforms of coadjoint-invariant Radon measures seem to be very interesting objects. 

\subsection{The results}

The body of the paper can be divided into two parts. The material of the first part mainly serves us as a guiding example, while the second part contains the main results.

\subsubsection{}

In the first part of the paper (sections 2--5) we examine the case of the general linear groups, which is simpler than that of other classical groups. 
Thus, in Part 1, $\overline G$ is the group $\GLB$. For the space $\bar\g^*$, we use the alternate notation $\La(q)$. Let $\M$ denote the convex cone of invariant Radon measures on $\La(q)$. We show that $\M$ splits into a direct product of countably many convex cones:
\begin{equation}\label{eq1.A}
\M=\prod_{\si\in\Si}\M_\si \qquad \text{($\Si$ is an index set).}
\end{equation}
This decomposition comes from certain partition of $\La(q)$ into a disjoint union of subsets which are invariant and \emph{clopen} (i.e.  both closed and open).  Moreover, all the cones $\M_\si$ are pairwise isomorphic, so we focus on the description of one of them, a distinguished cone denoted by  $\M_0$: it is formed by the invariant measures supported on the  subset $\Nil(\La(q))\subset\La(q)$ of \emph{pronilpotent matrices} (see the definition at the beginning of subsection \ref{sect3.5}).

We show that $\M_0$ is isomorphic to the cone of nonnegative harmonic functions on a  certain \emph{branching graph}, denoted by  $\Y^{\HL}(q^{-1})$.
For a general parameter $t$ ranging over $(0, 1)$, the branching graph $\Y^\HL(t)$ is a $t$-deformation of the Young graph $\mathbb Y$ in the following sense. The two graphs have common vertices and edges, but in $\Y^{\HL}(t)$, the edges are endowed with certain \emph{formal multiplicities}: these are the coefficients $\psi_{\la/\mu}(t)$ in the simplest Pieri rule (multiplication by the first power sum $p_1$) for the Hall--Littlewood (HL) symmetric functions with parameter $t$. In accordance to this, the harmonic functions in this setting satisfy a deformed version of equation  \eqref{eq1.B}, which is obtained by inserting the coefficients $\psi_{\la/\mu}(t)$ on the right-hand side; it takes the form
\begin{equation}\label{eq1.C}
\vp(\mu) = \sum_{\la\in\Y:\, \mu\nearrow\la}\psi_{\la/\mu}(t)\vp(\la), \quad \mu\in\Y,
\end{equation}
see Section \ref{HL_sec} for details. Thus, the picture looks quite similar to what we described above:  the problem is reduced to finding the boundary of a branching graph --- the HL-deformed graph $\Y^\HL(t)$ with $t=q^{-1}$.
Now we can apply results of  Kerov \cite{Ke1} and Matveev \cite{Mat} to complete the classification. Namely, Kerov suggested an ingenious way to construct harmonic functions corresponding to boundary points (in fact, in a more general setting; see Borodin--Corwin \cite[Sect. 2.2.1]{BorC} for details), and Matveev recently managed to prove that Kerov's construction gives exactly all elements of the boundary.
In this way we are able to obtain a complete answer to Problem \ref{problem1.A} in the case of $\overline G=\GLB$.

Note that the central result of part 1, the isomorphism between the cones $\M_0$ and $\Harm_+(\Y^\HL(q^{-1}))$, is essentially equivalent to 
Theorem 4.6 in \cite{GKV} concerning the so-called central measures on the subgroup $\mathbb U\subset\GLB$ of upper unitriangular matrices. However, we present the material from another perspective and also give a number of other results.

\subsubsection{}

In the second part of the paper (sections 6--9) we examine the case of unitary groups. The field $\F_q$ has a unique quadratic extension $\F_{q^2}$, which makes it possible to define sesquilinear Hermitian forms. In each dimension $N$, there is only one, within equivalence, nondegenerate sesquilinear Hermitian form, and we denote by  $U(N,q^2)$ the corresponding unitary group, i.e., the group of linear transformations preserving this form.
It can be realized as a subgroup of $GL(N,q^2)$ in various ways, depending on the choice of the matrix of the form. For building a $\GLB$-like topological completion $\overline G$ of an inductive limit group $G=\varinjlim G(n)$, we need embeddings $G(n)\to G(n+1)$ which are consistent with Borel subgroups. To satisfy this condition, we have to consider separately two twin series $\{G(n)\}$ and two direct limit unitary groups, which we call informally \emph{even} and \emph{odd} (this term refers to the parity of $N$):
\begin{equation*}
U(2\infty,q^2):=\varinjlim U(2n,q^2), \qquad U(2\infty+1,q^2):=\varinjlim U(2n+1,q^2).
\end{equation*}
In both series, the matrices of sesquilinear forms have $1$'s along the secondary diagonal and $0$'s elsewhere --- this leads to the required embeddings of groups and makes it possible for us to define the two desired topological completions.

Next, we define, in a natural way, the two corresponding Lie algebras, their dual spaces, and the coadjoint actions. Let $\M^{\e}$ and $\M^{\o}$ be the cones formed by the coadjoint-invariant Radon measures (here $\e$ and $\o$ are abbreviations of \emph{even} and \emph{odd}, respectively). We obtain direct product decompositions analogous to \eqref{eq1.A}, with another countable index set $\wt\Si$,
\begin{equation*}
\M^{\e} = \prod_{\si\in\wt\Si}\M^{\e}_{\si}, \qquad \M^{\o} = \prod_{\si\in\wt\Si}\M^{\o}_{\si},
\end{equation*}
as well as two distinguished cones $\M^{\e}_0$ and $\M^{\o}_0$, see Subsection \ref{general_measures_U} (the notation there is a bit different). 

Here a new effect arises: in the first decomposition, some components are isomorphic to $\M^{\e}_0$ and the other are isomorphic to $\M^{\o}_0$, and likewise for the second decomposition. Thus the two versions, even and odd, are intertwined. But we obtain again a reduction of our problem: it suffices to  study the distinguished cones $\M^{\e}_0$ and $\M^{\o}_0$.

In Theorem \ref{thm8.A}, we find two new branching graphs: $\Y^\HL_\e(t)$ and $\Y^\HL_\o(t)$. They are linked with the HL symmetric functions with the \emph{negative} parameter $t\in(-1,0)$. The vertices of $\Y^\HL_\e(t)$ are the Young diagrams of even size, and those of $\Y^\HL_\o(t)$ are the Young diagrams of odd size. The formal edge multiplicities depend on $t$ and are defined from the multiplication by $p_2$ in the HL basis. Theorem \ref{Ustructure} claims that the cones $\M^{\e}_0$ and $\M^{\o}_0$ are isomorphic to the cones of nonnegative harmonic functions on $\Y^\HL_\e(-q^{-1})$ and $\Y^\HL_\o(-q^{-1})$, respectively. 

Theorems \ref{thm8.A} and  \ref{Ustructure} are the main results of the present paper (the main computation used in the proof of Theorem \ref{Ustructure} is deferred to Section \ref{sect9}). The construction of Theorem \ref{thm8.A} is unusual in that the formal edge multiplicities are defined through the multiplication by $p_2$ instead of $p_1$. Another novel phenomenon is the appearance of the Hall--Littlewood functions with negative parameter $t$ (it strongly resembles \emph{Ennola's duality} \cite{E1}, \cite{E2}, \cite{TV}). 

As a first application of the main results, we construct a few examples of invariant measures including an analogue of the Plancherel measure (Section \ref{sec:plancherel}). 

We can construct a large family of invariant measures by different tools (this will be the subject of a future publication), but at present we do not have a complete classification. 

In view of the above, Problem \ref{problem1.A} for the even and odd unitary groups reduces to finding the boundaries of the new branching graphs $\Y^\HL_\e(t)$ and $\Y^\HL_\o(t)$ with $t=-q^{-1}$. The latter problem in turn can be formulated more broadly:

\begin{problem}\label{problem1.B}
Find the boundaries of the even and odd HL-deformed branching graphs $\Y^\HL_\e(t)$ and $\Y^\HL_\o(t)$ with parameter $t\in(-1,0)$.
\end{problem} 

\subsubsection{} The last Section \ref{sect10} may be viewed as an appendix. Here we present a simple general result linking coadjoint-invariant Radon measures on $\bar\g^*$ to generalized spherical  representations of the semidirect product group $\overline G\ltimes\bar\g$. (In Lie theory, such semidirect products are called \emph{Takiff groups}.)

\subsection{Acknowledgements}

We are grateful to the referee for the careful reading of the manuscript.
This project started while the first author (C.~C.) worked at California Institute of Technology. The research of the second author (G.~O.) was supported by the Russian Science Foundation, project 20-41-09009.

\section{The finite general linear group: preliminaries}\label{sect2}

\subsection{The group $GL(n,q)$ and the Lie algebra $\gl(n,q)$}\label{sect2.1}

Let $q$ be the power of a prime number.
Fix the finite field $\F:=\mathbb F_q$ with $q$ elements.
Denote by $GL(n, q)$ the group of invertible $n\times n$ matrices with entries in $\F$.
Denote by $\Mat_n(q) = \Mat_n(\F)$ the associative algebra of square matrices of order $n$, over the finite field $\F$; the corresponding Lie algebra, with commutator $[X, Y] = XY - YX$, is denoted $\gl(n, q)$.
The group $GL(n, q)$ is called the \emph{general linear group} over $\F$, and $\gl(n, q)$ acts as its Lie algebra.
The adjoint action of $GL(n, q)$ on $\gl(n, q)$ is matrix conjugation. The invariant bilinear form $(X,Y)\mapsto \Tr(XY)$ on $\gl(n,q)$ allows one to identify the vector space $\gl(n,q)$ with its dual, so we may identify the adjoint and coadjoint actions of $GL(n,q)$.

Let us agree that $GL(0, q) := \{1\}$ is the trivial group and $\gl(0, q) := \{0\}$ is the zero Lie algebra.

\subsection{Partitions}\label{sect2.2}
We recall some notions related to integer partitions, which  will be used throughout the paper. 

A partition is an infinite sequence $\la = (\la_1, \la_2, \cdots)$ of nonnegative integers such that $\la_1 \geq \la_2 \geq \dots \geq 0$ and only finitely many terms are distinct from $0$. We use lowercase Greek letters to denote partitions, e.g. $\la, \mu$. The size of the partition $\la$ is $|\la| := \sum_{i\geq 1}{\la_i}$. Further, we will use the following standard notation (Macdonald \cite[Ch. I]{Mac}):
\begin{equation}\label{eq2.A}
\ell(\la) := \max\{ j \mid \la_j \neq 0 \},\quad  n(\la) := \sum_{i \geq 1}{(i-1)\la_{i}},\quad m_i(\la) := \#\{ j \geq 1 \mid \la_j = i \}, \ i\geq 1.
\end{equation}

Following \cite{Mac} we identify partitions with their Young diagrams. We denote by $\Y$ the set of all partitions (=Young diagrams) and write $\Y_n$ for the set of partitions of size $n$. In particular,  $\Y_0$ only contains the zero partition (= empty Young diagram), to be denoted $\emptyset$.

The partition $\la' = (\la_1', \la_2', \cdots)$ corresponding to the transposed Young diagram $\la'$ is given by $\la_k' := \sum_{j \geq k}{m_j(\la)}$, for all $k\geq 1$.

\subsection{Nilpotent conjugacy classes in $\gl(n, q)$}\label{orbits_GL}\label{sect2.3}

By a conjugacy class in $\gl(n,q)$ we mean a $GL(n,q)$-orbit in this space. A conjugacy class will be called nilpotent if it consists of nilpotent matrices. The set of nilpotent matrices in $\gl(n,q)$ will be denoted by $\Nil(\gl(n, q))$.  

We say that  a nilpotent matrix $X\in\Nil(\gl(n, q))$ has \emph{Jordan type $\la\in\Y_n$} if its Jordan blocks have lengths $\la_1,\la_2,\cdots$\,. This gives a parametrization of nilpotent conjugacy classes in $\gl(n,q)$ by partitions of size $n$. The class corresponding to a partition $\la\in\Y_n$ will be denoted by $\{\la\}$. In this notation,  the decomposition of $\Nil(\gl(n, q))$ into conjugacy classes is written as
\begin{equation*}
\Nil(\gl(n, q)) = \bigsqcup_{\la\in\Y_n}{\{\la\}}.
\end{equation*}

\subsection{General conjugacy classes in $\gl(n,q)$}\label{sect2.4}

Let $\T_n$ denote the set of conjugacy classes in $\gl(n,q)$. A matrix $X\in\gl(n, q))$ belonging to a class $\tau\in\T_n$ is said to be \emph{of type $\tau$}. 

Let $\NSin(\gl(n,q))\subset\gl(n,q)$ be the subset of nonsingular matrices (that is, matrices with nonzero determinant). It is $GL(n,q)$-invariant. Let  $\Si_n\subset\T_n$ be the subset of conjugacy classes contained in $\NSin(\gl(n,q))$. Elements of $\Si_n$ will be called \emph{nonsingular classes} or \emph{nonsingular types}.

\begin{lemma}\label{lemma2.A}
There is a natural bijection 
$$
\T_n\leftrightarrow \bigsqcup_{s=0}^n (\Si_s\times\Y_{n-s}).
$$
\end{lemma}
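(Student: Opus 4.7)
The plan is to realize the bijection via the \emph{Fitting decomposition}. For any $X\in\gl(n,q)$ acting on $V=\F^n$, define the $X$-invariant subspaces
$$
V_0(X) := \bigcup_{k\geq 1}\Ker(X^k), \qquad V_*(X) := \bigcap_{k\geq 1}\Ran(X^k).
$$
Standard linear algebra (valid over any field) shows $V = V_0(X)\oplus V_*(X)$, with $X|_{V_0(X)}$ nilpotent and $X|_{V_*(X)}$ invertible. Setting $s(X):=\dim V_*(X)$, I obtain a nonsingular type $\sigma(X)\in\Si_{s(X)}$ from $X|_{V_*(X)}$ and a nilpotent type in $\gl(n-s(X),q)$, which by Subsection \ref{sect2.3} corresponds to a unique partition $\la(X)\in\Y_{n-s(X)}$.

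First I would check that the triple $(s(X),\sigma(X),\la(X))$ depends only on the conjugacy class of $X$: conjugation by $g\in GL(n,q)$ carries $V_0(X)$ and $V_*(X)$ to $V_0(gXg^{-1})$ and $V_*(gXg^{-1})$, respectively, and the restrictions of $X$ and $gXg^{-1}$ to the corresponding subspaces are conjugate by $g$. This yields a well-defined map
$$
\Phi:\T_n\to\bigsqcup_{s=0}^n(\Si_s\times\Y_{n-s}), \qquad [X]\mapsto (\sigma(X),\la(X)),
$$
where $s$ is encoded by the component of the disjoint union.

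For the inverse map $\Psi$, given $s\in\{0,1,\ldots,n\}$, $\tau\in\Si_s$, and $\la\in\Y_{n-s}$, pick representatives $Y\in\tau\subset\gl(s,q)$ and $Z\in\{\la\}\subset\gl(n-s,q)$, and send $(\tau,\la)$ to the conjugacy class of the block-diagonal matrix $Y\oplus Z\in\gl(n,q)$. The class is independent of the choice of representatives because conjugating $Y$ by $h_1\in GL(s,q)$ and $Z$ by $h_2\in GL(n-s,q)$ amounts to conjugating $Y\oplus Z$ by $h_1\oplus h_2\in GL(n,q)$.

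To show $\Phi$ and $\Psi$ are mutually inverse, I would note that for $Y\oplus Z$ in block-diagonal form with $Y$ invertible and $Z$ nilpotent, the Fitting decomposition recovers exactly the two coordinate summands, so $\Phi\circ\Psi=\id$. Conversely, any $X$ is conjugate to $X|_{V_*(X)}\oplus X|_{V_0(X)}$ after choosing a basis adapted to the Fitting decomposition, giving $\Psi\circ\Phi=\id$. The only nontrivial point is the Fitting decomposition itself, and it poses no serious obstacle: for $k$ large enough, $\Ker(X^k)$ and $\Ran(X^k)$ stabilize, their intersection is $0$ (since $X$ is invertible on $\Ran(X^k)$ for such $k$), and dimensions add to $n$ by the rank--nullity theorem.
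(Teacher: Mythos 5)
Your proposal is correct and follows essentially the same route as the paper: the paper's proof also rests on the (unique, hence conjugation-equivariant) decomposition $V=V'\oplus V_0$ with $X$ invertible on $V'$ and nilpotent on $V_0$, i.e.\ the Fitting decomposition, which you simply spell out in more detail together with the explicit inverse via block-diagonal representatives.
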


(Here we regard $\Si_0$ as a singleton, so that $\Si_0\times\Y_n$ is identified with $\Y_n$.) 

\begin{proof}
It is convenient to regard matrices  $X\in \gl(n,q)$ as linear operators on the vector space $V:=\F^n$.  For any $X$, there is a unique direct sum decomposition $V=V'\oplus V_0$ such that both $V'$ and $V_0$ are $X$-invariant, $X\big|_{V'}$ is invertible,  and $X\big|_{V_0}$ is nilpotent. Because of uniqueness, this decomposition is $GL(n,q)$-invariant. It provides the desired bijection. 
\end{proof} 

By virtue of the lemma, each $\tau\in\T_n$ is represented by a pair $(\si,\la)$, where $\si\in\Si_s$ and $\la\in\Y_{n-s}$ for some $s$, $0\le s\le n$. 

Thus, the parametrization of general conjugacy classes in $\gl(n,q)$ is reduced to the description of the sets $\Si_s$. The latter is given in the remark below, but in fact we do not need it. We will only use the bijection established in the lemma.

\begin{remark}[cf. Macdonald \cite{Mac}, Ch. IV, Sect. 2, or Burgoyne-Cushman \cite{BC}]\label{rem2.A}
Let $\Phi'$ denote the set of irreducible, monic polynomials in $\F[x]$ with nonzero constant term. There is a bijective correspondence between elements of $\Si_s$ and maps $\boldsymbol{\mu}: \Phi'\to \Y$ such that $\boldsymbol{\mu}(f) = \emptyset$ for all but finitely many polynomials $f\in\Phi'$ and
$$
\sum_{f\in\Phi'}\deg(f)|\boldsymbol{\mu}(f)|=s,
$$
where $\deg(f)$ is the degree of $f$.
\end{remark}

\section{Invariant measures on $\La(q)$: generalities}\label{sec_invmeasures}

\subsection{The space $\La(q)$. The groups $GL(\infty,q)$ and $\GLB$}\label{sect3.1}

Let $\Mat_{\infty}(q) = \Mat_{\infty}(\F)$ be the space of infinite matrices $M = [m_{i, j}]_{i, j \geq 1}$ with entries in $\F$.
For $n\in\Z_{\geq 0}$, let $\La_n(q)\subset\Mat_{\infty}(q)$ be the subset of matrices $M\in\Mat_{\infty}(q)$ such that $m_{i, j} = 0$, whenever $i \geq j$ and $i > n$.
In particular, $\La_0(q)$ is the set of strictly upper triangular matrices.
We extend the definition of $\La_n(q)$ to negative values of $n$ as follows: if $n\in\Z_{<0}$, then $\La_{n}(q)\subset \La_0(q)$ is the subgroup formed by the matrices for which the first $|n|$ columns are null.

As an example, matrices in $\La_2(q)$ and in $\La_{-2}(q)$ look like
$$
M=\begin{bmatrix} * & * & * &  * & * &  \\ * & * & * & * & * & \vdots \\ 0 & 0 & 0 &  * & * &  \\ 0 & 0 & 0 & 0 & * & &  \\  & \cdots & & &  & \ddots \end{bmatrix} \quad \text{and} \quad M=\begin{bmatrix} 0& 0 & * &  * & * &  \\ 0 & 0 & * & * & * & \vdots \\ 0 & 0 & 0 &  * & * &  \\ 0 & 0 & 0 & 0 & * & &  \\  & \cdots & & &  & \ddots \end{bmatrix}\!,
$$
respectively (here an asterisk stands for an arbitrary element of $\F$).

Each set $\La_n(q)$ is a vector space over $\F$ and, in particular, a commutative group under addition.
Note that $\La_n(q)$ is contained in $\La_{n+1}(q)$ as a subgroup of finite index $q^{|n+1|}$, for every $n\in\Z$.

\begin{definition}
Let $\La(q)$ be the inductive limit group $\varinjlim \La_n(q)$. As a set, it is the union of all $\La_n(q)$. In other words, $\La(q)$ is the set of almost strictly upper triangular matrices.
\end{definition}

Each $\La_n(q)$ is a profinite group (a projective limit of finite groups); as such it is endowed with the corresponding projective limit topology making it a compact topological group. Equivalently, the topology of $\La_n(q)$ is the topology of pointwise convergence of matrix elements. Evidently, $\La_n(q)$ is an open subgroup of $\La_{n+1}(q)$ for each $n\in\Z$. Next, we equip $\La(q)$ with the inductive limit topology; then it becomes a locally compact topological group. The topology of $\La(q)$ is totally disconnected:  each subgroup $\La_n(q)$ is compact and clopen (both open and closed), and the subgroups $\La_n(q)$ with $n < 0$ form a fundamental system of neighborhoods of $\{0\}$.

The natural inclusions $GL(n, q)\hookrightarrow GL(n+1, q)$ give rise to the inductive limit group $GL(\infty, q) := \varinjlim GL(n, q)$. It is a countable group and it acts on $\La(q)$ by conjugations.

Recall that the group $\GLB$ is formed by invertible infinite size matrices over $\F$ with finitely many entries below the diagonal. As in the case of $\La(q)$, in the group $\GLB$ there is a doubly infinite chain $\{\GLB_n: n\in\Z\}$ of subgroups such that 
$$
\GLB_n\subset \GLB_{n+1}, \quad \bigcap_n \GLB_n=\{e\}, \quad \bigcup_n \GLB_n=\GLB,
$$
and each $\GLB_n$ is a profinite group and has finite index in $\GLB_{n+1}$.
Here $\GLB_0$ is the ``Borel subgroup'' $\mathbb B\subset\GLB$ formed by invertible upper triangular matrices; if $n\ge1$, then $\GLB_n$ is the ``parabolic subgroup'' formed by matrices with $0$'s at all positions $(i,j)$ such that $i>n$ and $j<i$; finally, if $n<0$, then $\GLB_n$ is the subgroup of $\mathbb B$ formed by matrices whose upper left block of size $|n|\times|n|$ is the identity matrix. Again, $\GLB$ is a totally disconnected, locally compact topological group; its topology is uniquely determined by the condition that each $\GLB_n$ is a compact and open subgroup. Note that $GL(\infty,q)$ is a dense subgroup of $\GLB$.

Like $GL(\infty,q)$, the group $\GLB$ acts on $\La(q)$ by conjugation. We regard this as the \emph{coadjoint action} (see Section \ref{sect1.1}).  

\begin{remark} 
One can prove that the action $\GLB\times\La(q)\to\La(q)$ is continuous. We do not use this fact anywhere except at the very end of the illustrative Section \ref{sect10}. Note also that it is this fact that ensures the existence of the orbital measures, which we spoke about in the introduction. 
\end{remark}

\subsection{Radon measures on $\La(q)$}

For any $M\in\Mat_{\infty}(q)$, denote $M^{\{n\}} := [m_{i, j}]_{i, j = 1, \cdots, n}$ its upper-left $n\times n$ corner; it is a matrix from $\gl(n, q)$.   An \emph{elementary cylinder set of level\/ $n\in\Z_{\ge0}$} is a subset of $\La(q)$ of the form
\begin{equation}\label{N_subsets}
\Cyl_n(X):=\{ M \in \La(q) \mid M\in \La_n(q),\ M^{\{n\}} = X \}\subset\La(q),
\end{equation}
where $n\in\Z_{\geq 0}$ and $X\in\gl(n, q)$. If we denote by $X_0$ the unique element from the zero Lie algebra $\gl(0, q)$, we agree that $\Cyl_0(X_0) := \La_0(q)$. 

More generally, a \emph{cylinder set of level\/ $n$} is, by definition, a (finite) union of some elementary cylinder sets of the same level. In particular, $\La_n(q)$ is a cylinder set of level $n$. Note that any cylinder set of level $n$ is also a cylinder set of level $n+1$. In particular, for elementary cylinders we have
$$
\Cyl_n(X)=\bigsqcup_{Y}\Cyl_{n+1}(Y),\quad X\in\gl(n, q),
$$
where the union is over matrices $Y\in\gl(n+1, q)$ whose upper-left $n\times n$ corner is $X$, and whose $(n+1)$-th row has all zeroes.

The elementary cylinder sets are clopen compact sets; they form a base of the topology of $\La(q)$. 

By a \emph{measure} we always mean a countably additive nonnegative set function (we will not need complex or signed measures).  Recall that a \emph{Radon measure} on a locally compact space is a Borel measure (possibly of infinite mass) with the property that it takes finite values on compact subsets. 

\begin{lemma}\label{lemma3.A}
There is a bijective correspondence between Radon measures on $\La(q)$ and finitely-additive, nonnegative set functions on the  cylinder sets of all levels, with finite values. 
\end{lemma}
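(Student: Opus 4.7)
The plan is to construct the claimed bijection directly: in one direction I restrict a Radon measure to cylinder sets, and in the other I extend a finitely additive cylinder function to a Borel measure via Carath\'eodory. The key ingredient making both directions work is that every cylinder set is clopen compact in the totally disconnected space $\La(q)$; this both forces a Radon measure to take finite values on cylinders and allows finite additivity to be upgraded to countable additivity on the algebra they generate.

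The easy direction requires essentially nothing: given a Radon measure $\mu$, set $\nu(C) := \mu(C)$ for every cylinder $C$. Finite additivity is automatic from countable additivity, and each value is finite because cylinders are compact.

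For the reverse direction, I would localize to the compact subgroups $\La_n(q)$, $n \geq 0$. The elementary cylinders of level $m \geq n$ lying in $\La_n(q)$ generate, under finite unions, an algebra $\mathcal{R}_n$ of clopen subsets of $\La_n(q)$; since these elementary cylinders form a base for the pointwise-convergence topology, $\mathcal{R}_n$ generates the Borel $\sigma$-algebra of $\La_n(q)$. The main obstacle, and essentially the only non-bookkeeping step, is to upgrade the finite additivity of $\nu$ on $\mathcal{R}_n$ to countable additivity. This I would handle by the standard compactness trick: if $C = \bigsqcup_{i \geq 1} C_i$ with $C, C_i \in \mathcal{R}_n$ pairwise disjoint, then $\{C_i\}$ is an open cover of the compact set $C$ by mutually disjoint open sets, so all but finitely many $C_i$ are empty, and the identity collapses to a finite sum handled by hypothesis. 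Carath\'eodory's extension theorem then produces a unique finite Borel measure $\mu_n$ on $\La_n(q)$ extending $\nu|_{\mathcal{R}_n}$, of total mass $\sum_{X \in \gl(n,q)} \nu(\Cyl_n(X))$.

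Finally, I glue the family $\{\mu_n\}_{n \geq 0}$ into a Borel measure on $\La(q)$. Consistency $\mu_{n+1}|_{\La_n(q)} = \mu_n$ is automatic from Carath\'eodory uniqueness, since both sides are finite Borel measures on $\La_n(q)$ agreeing on the generating algebra $\mathcal{R}_n$. Because $\La(q) = \bigcup_n \La_n(q)$ is an ascending union of clopen Borel subsets, the consistent family defines a unique Borel measure $\mu$ on $\La(q)$ with $\mu|_{\La_n(q)} = \mu_n$. For the Radon property, any compact $K \subset \La(q)$ is contained in some $\La_m(q)$ by compactness applied to the ascending open cover $\{\La_n(q)\}$, whence $\mu(K) \leq \mu_m(\La_m(q)) < \infty$. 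That the two constructions are mutually inverse follows once again from the uniqueness clause of Carath\'eodory extension, completing the bijection.
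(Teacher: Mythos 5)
Your proof is correct and follows essentially the same route as the paper's: the compactness of cylinder sets forces any disjoint decomposition to be finite (so the Carath\'eodory hypothesis holds trivially), and the Radon property comes from compact subsets of $\La(q)$ being contained in some $\La_n(q)$. The only differences are organizational — you localize to each $\La_n(q)$ and glue the finite measures, and you get the containment of compacta from the ascending open cover $\{\La_n(q)\}$ directly rather than citing the general inductive-limit lemma — neither of which changes the substance of the argument.
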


\begin{proof}
Any Radon measure obviously produces a finitely-additive function on the cylinder sets. Let us show that, conversely, any finitely-additive function on the ring of cylinder sets admits a unique extension to a Radon measure. If an extension exists, then it is unique, because the cylinder sets generate the sigma algebra of Borel sets. To show the existence, we are going to apply the Carath\'eodory extension theorem from measure theory. Its hypothesis is satisfied for the trivial reason that any decomposition of a cylinder set into disjoint open subsets must be \emph{finite} (because all cylinder sets are compact). Thus, the theorem is applicable. It gives us a true (that is, countably-additive) measure on $\La(q)$. Finally, we claim that it satisfies the  Radon condition. Indeed, this follows from the fact that any  compact subset of $\La(q)$ is contained in $\La_n(q)$ with $n$ large enough. This fact, in turn, is a particular case of the following general result: let $X_1\subset X_2\subset \dots$ be an ascending sequence of locally compact Hausdorff spaces such that each $X_n$ is a closed subspace of $X_{n+1}$, and let $X:=\varinjlim X_n$ be their union, endowed with the inductive limit topology; then any compact subset of $X$ is contained in some $X_n$.  For a proof, see e.g. Gl\"ockner \cite[Lemma 1.7, item (d)]{Gloeckner}.
\end{proof}

\subsection{Invariant measures on $\La(q)$}

\begin{definition}
Define $\PGL$ as the set of $GL(\infty, q)$-invariant Radon measures on $\La(q)$.
\end{definition}

\begin{proposition}\label{prop3.A}
Any measure $P\in\PGL$ is automatically $\GLB$-invariant.
\end{proposition}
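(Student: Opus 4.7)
The strategy is to combine Lemma \ref{lemma3.A}, which says that a Radon measure on $\La(q)$ is determined by its values on cylinder sets, with the density of $\glinfty$ in $\GLB$ noted in Subsection \ref{sect3.1}. For each elementary cylinder set $C=\Cyl_N(X)$, let
$$
\operatorname{Stab}(C):=\{g\in\GLB:gCg^{-1}=C\}
$$
be its stabilizer under the conjugation action. The plan is to show that $\operatorname{Stab}(C)$ is an \emph{open} subgroup of $\GLB$, and then deduce invariance by a coset argument combined with density.

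For the openness claim, I would verify that $\operatorname{Stab}(C)\supseteq\GLB_m$ whenever $m<0$ and $|m|>N$. An element $g\in\GLB_m$ is upper triangular with identity upper-left $|m|\times|m|$ block, and these two properties together force $g$ and $g^{-1}$ to fix the standard basis vectors $e_j$ for $j\le|m|$; equivalently, the first $|m|$ columns of $g$ and of $g^{-1}$ coincide with those of the identity. Given $M\in C$, I would compute $gMg^{-1}$ entry-by-entry via $(gMg^{-1})_{i,j}=\sum_{k,l}g_{i,k}M_{k,l}(g^{-1})_{l,j}$ and run a short case split (on whether the indices lie inside or outside the large identity block) to show that (i) the upper-left $N\times N$ corner is preserved, and (ii) every entry $(i,j)$ with $i>N$ and $i\ge j$ vanishes. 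Both checks rely only on the three rigidity conditions — identity columns up to index $|m|$, upper triangularity of $g$ and $g^{-1}$, and the defining condition $M\in\La_N(q)$ — which together force all potentially nonzero summands to cancel or to involve a factor $M_{k,l}$ with $k>N$ and $k\ge l$. This gives $gCg^{-1}\subseteq C$, and applying the same argument to $g^{-1}$ yields equality.

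Once $\operatorname{Stab}(C)$ is known to be open in $\GLB$, the set $\glinfty\cdot\operatorname{Stab}(C)$ is a union of left cosets of an open subgroup and is therefore open; as it contains the dense subgroup $\glinfty$, it must equal $\GLB$. Hence every $g\in\GLB$ factors as $g=hs$ with $h\in\glinfty$ and $s\in\operatorname{Stab}(C)$, whence
$$
P(gCg^{-1})=P(hsCs^{-1}h^{-1})=P(hCh^{-1})=P(C),
$$
where the last equality uses the $\glinfty$-invariance of $P$. Applying this identity with $g^{-1}$ in place of $g$ shows that the pushforward of $P$ under conjugation by $g$ agrees with $P$ on every elementary cylinder set, hence on every cylinder set, and therefore on every Borel subset of $\La(q)$ by Lemma \ref{lemma3.A}. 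This is the desired $\GLB$-invariance.

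The only nontrivial step is the entry-by-entry verification that $\GLB_m\subseteq\operatorname{Stab}(C)$ for $|m|>N$; I expect this to be the main obstacle, though a mild one, since the large identity block together with upper triangularity imposes enough rigidity that every offending sum is forced to vanish. Everything else — the coset argument and the reduction to cylinder sets — is formal.
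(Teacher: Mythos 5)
Your proposal is correct, and the computation you defer does go through: for $g\in\GLB_m$ with $m<0$ and $|m|>N$, write $g$ in block form relative to splitting the index set at $|m|$ — identity upper-left block, arbitrary upper-right block $A$, upper triangular invertible lower-right block $B$; then for $M\in\Cyl_N(X)$, whose lower-left block vanishes and whose lower-right block is strictly upper triangular, the conjugate $gMg^{-1}$ has the \emph{same} upper-left $|m|\times|m|$ corner as $M$ and lower-right block $BM_{22}B^{-1}$, again strictly upper triangular; hence $\GLB_m\subseteq\operatorname{Stab}(\Cyl_N(X))$, exactly as you claim. Your route is, however, genuinely different from the paper's. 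The paper fixes $g\in\GLB$ and raises the level of the cylinder: choosing $n$ so large that $g_{ij}=0$ whenever $i>j$ and $i>n$, the same kind of block computation shows $g\,\Cyl_n(X)\,g^{-1}=\Cyl_n(hXh^{-1})$ with $h=g^{\{n\}}\in GL(n,q)$, and $GL(\infty,q)$-invariance finishes the proof — no density of $GL(\infty,q)$ in $\GLB$ and no coset argument are needed, only the remark that every cylinder set is also a cylinder set of any higher level. You instead fix the cylinder and decompose the group element, $g=hs$ with $h\in\glinfty$ and $s$ in the open stabilizer; this is longer, but it isolates a soft, reusable principle: when all cylinder stabilizers are open, invariance under a dense subgroup upgrades automatically to invariance under the completion. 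One sentence should be phrased more carefully: an open set containing a dense subgroup need not be the whole group, so ``open and contains $\glinfty$, hence equals $\GLB$'' is not by itself a valid inference; what saves you is that $\glinfty\cdot\operatorname{Stab}(C)$ is a union of left cosets of an open subgroup and is therefore clopen (or, more directly, the open coset $g\operatorname{Stab}(C)$ must meet the dense subgroup $\glinfty$, which produces the factorization $g=hs$ immediately). With that one-line patch, your argument is complete and correct.
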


\begin{proof}
Let $g\in\GLB$ be arbitrary. It suffices to prove that for any elementary cylinder set $C\subset \La(q)$, one has $P(gCg^{-1})=P(C)$. Further, with no loss of generality, it suffices to prove this under the additional assumption that $n$,  the level of $C$,  is large enough. We will assume that $n$ is so large that all matrix entries $g_{ij}$, such that $i>j$ and $i>n$, are equal to $0$. Let  $h:=g^{\{n\}}$ be the upper left $n\times n$ corner of the matrix $g$. Our assumption implies that $h$ is invertible and hence belongs to $GL(n,q)$. Next, the conjugation by $g$ permutes the elementary cylinder sets of level $n$: if $X\in\gl(n,q)$, then
$$
g \Cyl_n(X) g^{-1}= \Cyl_n(hXh^{-1}).
$$
Because $P$ is $GL(\infty,q)$-invariant, we have $P(\Cyl_n(hXh^{-1}))=P(\Cyl_n(X))$. This completes the proof.
\end{proof}

Note that $\PGL$ is a convex cone. Its description will be given in Section \ref{sect5}. In particular, we will describe the extreme measures in the sense of the following definition.  

\begin{definition}
A measure $P\in\PGL$ is said to be \emph{extreme} if it is nonzero and if any other measure $P'\in\PGL$ which is majorated by $P$ (meaning that $P'(A) \le P(A)$ for all Borel sets $A\subseteq\La(q)$) is in fact proportional to $P$. Equivalently, if $P'\in\PGL$ is absolutely continuous with respect to $P$, then $P'$ is proportional to $P$.
\end{definition}

A standard argument shows that $P\in\PGL$ is extreme if and only if it is \emph{ergodic} in the sense that  if $A\subset \La(q)$ is a $GL(\infty,q)$-invariant Borel subset, then is either $A$ is $P$-null or its complement $\La(q)\setminus A$ is $P$-null. See Phelps \cite[Proposition 12.4]{Phelps} (although Phelps considers probability measures, the claim remains valid for infinite measures as well). 

\subsection{All nontrivial invariant Radon measures on $\La(q)$ are infinite}
By a trivial measure we mean a multiple of the Dirac measure at the point $0\in\La(q)$. 

\begin{proposition}
All nontrivial measures $P\in\PGL$ are infinite measures.  
\end{proposition}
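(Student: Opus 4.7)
The plan is to argue by contradiction. Suppose $P\in\PGL$ is finite and nontrivial, so that $P(\La(q)\setminus\{0\})>0$. Any nonzero matrix $M\in\La(q)$ has $M^{\{n\}}\neq 0$ for all large enough $n$, and therefore
$$
\La(q)\setminus\{0\}\;=\;\bigcup_{n\ge 1}\;\bigcup_{X\in\gl(n,q)\setminus\{0\}}\Cyl_n(X).
$$
By countable subadditivity of $P$ there must exist $n\ge 1$ and $X\in\gl(n,q)\setminus\{0\}$ with $P(\Cyl_n(X))>0$. My strategy is then to produce infinitely many pairwise disjoint $GL(\infty,q)$-conjugates of this cylinder; by invariance each carries the same positive $P$-measure, forcing $P(\La(q))=+\infty$ and contradicting finiteness.

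Pick a nonzero entry $X_{ij}$ of $X$. For each integer $k>n$, let $g_k\in GL(\infty,q)$ be the permutation matrix of the transposition $(i,k)$, and set $C_k:=g_k\,\Cyl_n(X)\,g_k^{-1}$. Invariance gives $P(C_k)=P(\Cyl_n(X))>0$ for every $k>n$; disjointness of the family $\{C_k\}_{k>n}$ will therefore yield
$$
P(\La(q))\;\ge\;\sum_{k>n}P(C_k)\;=\;+\infty,
$$
which is the desired contradiction.

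The disjointness is established by pinning down the level of $C_k$ in the filtration $\La_0(q)\subset\La_1(q)\subset\cdots$. For $M\in\Cyl_n(X)$ and $M':=g_kMg_k^{-1}$, the transposition $(i,k)$ carries $X_{ij}$ to the entry of $M'$ at position $(k,j)$ when $i\neq j$, and to $(k,k)$ when $i=j$; in either case row $k$ of $M'$ acquires a nonzero entry on or below its diagonal, so $M'\notin\La_{k-1}(q)$. A short case-by-case check on the four possibilities for the indices under $(i,k)$ shows conversely that for every row $a>k$, each entry $M'_{ab}$ with $b\le a$ is pulled back from an entry of $M$ in a row $>n$ at a position on or below the diagonal, and hence vanishes by the defining condition of $\La_n(q)$. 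Thus $C_k\subseteq\La_k(q)\setminus\La_{k-1}(q)$, and these annuli are pairwise disjoint as $k$ varies.

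The principal obstacle is just this bookkeeping verification that $C_k\subseteq\La_k(q)\setminus\La_{k-1}(q)$; everything else uses only the definitions and $GL(\infty,q)$-invariance. In spirit the proof expresses the intuitive fact that every nontrivial $GL(\infty,q)$-orbit in $\La(q)$ spreads across arbitrarily deep levels of the filtration, so no finite invariant Radon measure can assign positive mass outside $\{0\}$.
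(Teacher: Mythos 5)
Your proof is correct, and it takes a genuinely different route from the paper. The paper normalizes $P$ to a probability measure, views the matrix entries as random variables, and applies de Finetti's theorem to the exchangeable family $\xi_{i+1,1}$ (and then to the diagonal entries) to conclude that a finite invariant measure must be the Dirac mass at $0$. You instead argue directly: a finite nontrivial $P$ must charge some elementary cylinder $\Cyl_n(X)$ with $X\neq 0$, and conjugating by the transpositions $(i,k)$, $k>n$, where $X_{ij}\neq 0$, produces sets $C_k\subseteq\La_k(q)\setminus\La_{k-1}(q)$ of equal positive mass; since these annuli of the filtration are pairwise disjoint, $P$ has infinite total mass. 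The case check behind $C_k\subseteq\La_k(q)\setminus\La_{k-1}(q)$ is routine and works exactly as you indicate ($M'_{ab}=M_{\sigma(a)\sigma(b)}$ with $\sigma=(i,k)$, and for $a>k$ one always has $\sigma(a)=a>n$ and $\sigma(b)\le a$). Your argument is more elementary, avoiding de Finetti entirely, and since the only group elements you use are permutation matrices, it also yields the sharper statement noted in the paper, namely that $S(\infty)$-invariance alone already forces a finite measure to be trivial; what the paper's probabilistic argument buys in exchange is a structural viewpoint (exchangeability and mixtures of i.i.d.\ laws) consistent with the ergodic-theoretic machinery used elsewhere in the text.
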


In fact, this claim holds under a weaker assumption: it suffices to assume that $P$ is invariant under the action of the infinite symmetric group $S(\infty):=\varinjlim S(n)$ embedded into $GL(\infty,q)$ in the natural way.

\begin{proof}
Suppose $P$ is an $S(\infty)$-invariant Borel measure on $\La(q)$ with finite total mass; we shall show that $P$ is concentrated at $\{0\}$. We may assume that the total mass equals $1$.  Regard $(\La(q), P)$ as a probability space. Then the entries $m_{ij}$ of matrices $M\in\La(q)$ turn into $\F$-valued random variables; let us denote them by $\xi_{ij}$.
Consider the collection of random variables $\eta_i:=\xi_{i+1,1}$, where $i=1,2,3,\dots$.  This collection is \emph{exchangeable}, because any finitary permutation of them can be implemented by a suitable permutation matrix lying in $S(\infty)$.
Therefore we may apply de Finetti's theorem, which tells us that $\{\eta_i\}$ is a mixture of i.i.d random variables.
On the other hand, by the very definition of $\La(q)$, the number of nonzero $\eta_i$'s is finite, with $P$-probability $1$.  This is only possible if all $\eta_i$'s are in fact equal to $0$, with $P$-probability $1$. Next, every variable $\xi_{ij}$ with $i\ne j$ can be transformed into $\eta_1=\xi_{21}$ by means of conjugation by a matrix from $S(\infty)$. It follows that $\xi_{ij}=0$ for all $i\ne j$, with $P$-probability $1$. Hence $P$ is concentrated on the subset of diagonal matrices. Finally, we repeat the same argument, based on de Finetti's theorem, to the random variables $\xi_{ii}$: it equally follows that $\xi_{ii} = 0$ for all $i$, with $P$-probability $1$, thus concluding the proof of our initial claim.
\end{proof}

\subsection{Invariant measures on pronilpotent matrices}\label{sect3.5}

Let $\Nil(\La_n(q)) \subset \La_n(q)$ be the set of matrices $M\in \La_n(q)$ with nilpotent $n\times n$ corners $M^{\{n\}}$.  From the definition of $\La_n(q)$ it follows that $\Nil(\La_n(q))\subset\Nil(\La_{n+1}(q))$. Also, let
$$
\Nil(\La(q)) := \bigcup_{n = 1}^{\infty}{\Nil(\La_n(q))}.
$$
This set consists of matrices  $M\in\La(q)$ for which any sufficiently large corner $M^{\{n\}}$ is nilpotent.
Matrices from $\Nil(\La(q))$ will be called \emph{pronilpotent matrices}. 

\begin{lemma}\label{lemma3.nil}
One has 
\begin{equation*}
\Nil(\La(q))=\bigcup_{g\in GL(\infty,q)} g\, \La_0(q) g^{-1}.
\end{equation*}
\end{lemma}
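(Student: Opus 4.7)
My plan is to prove the two inclusions separately, exploiting the block decomposition of infinite matrices into an upper-left $n\times n$ corner and the rest.

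For the inclusion $\supseteq$, take any $g \in GL(\infty,q)$ and $N \in \La_0(q)$. By definition of the inductive limit $GL(\infty,q)=\varinjlim GL(n,q)$, we have $g = h \oplus I$ for some $n \ge 1$ and some $h \in GL(n,q)$. I would write $N$ in block form (with the first $n$ rows/columns separated from the rest) as
\[
N=\begin{pmatrix} A & B \\ 0 & C \end{pmatrix},
\]
where $A$ is strictly upper triangular of size $n$, $C$ is strictly upper triangular of infinite size, and $B$ is arbitrary. A direct calculation gives
\[
gNg^{-1}=\begin{pmatrix} hAh^{-1} & hB \\ 0 & C \end{pmatrix}.
\]
From this one reads off that $gNg^{-1}\in\La_n(q)$ (the bottom block row has the required strictly upper triangular shape) and that its upper-left $n\times n$ corner $hAh^{-1}$ is similar to the nilpotent matrix $A$, hence nilpotent. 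Therefore $gNg^{-1}\in\Nil(\La_n(q))\subseteq \Nil(\La(q))$.

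For the reverse inclusion $\subseteq$, take $M\in\Nil(\La(q))$. Pick $n$ large enough that $M\in\La_n(q)$ and $M^{\{n\}}\in\gl(n,q)$ is nilpotent. Since a nilpotent matrix over any field is conjugate to a strictly upper triangular matrix (via Jordan form, whose blocks for the zero eigenvalue are strictly upper triangular), there is $h\in GL(n,q)$ with $hM^{\{n\}}h^{-1}$ strictly upper triangular. Let $g := h\oplus I \in GL(\infty,q)$. Writing $M$ in the same block form as above (with $A=M^{\{n\}}$ nilpotent and $D$ strictly upper triangular, because $M\in\La_n(q)$), the conjugate $gMg^{-1}$ has upper-left corner $hAh^{-1}$ strictly upper triangular, lower-left block still $0$, and lower-right block still $D$ strictly upper triangular; the upper-right entries lie above the diagonal and are unconstrained. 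Hence $gMg^{-1}\in\La_0(q)$, so $M\in g^{-1}\La_0(q)g \subseteq \bigcup_{g'\in GL(\infty,q)} g'\La_0(q)(g')^{-1}$.

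I do not anticipate any real obstacle: the argument is an elementary block-matrix computation combined with the fact that a nilpotent matrix over $\F_q$ admits a strictly upper triangular Jordan normal form. The only thing worth emphasizing when writing out the proof is that one must choose $n$ simultaneously large enough to witness both $M\in\La_n(q)$ and the nilpotency of $M^{\{n\}}$, which is exactly what membership in $\Nil(\La(q))$ provides.
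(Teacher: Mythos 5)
Your proof is correct and follows essentially the same route as the paper, which simply reduces the lemma to the finite-dimensional fact that any nilpotent matrix in $\gl(n,q)$ is conjugate to a strictly upper triangular one; your block-matrix computations just spell out the details of both inclusions. No gaps: in particular, you correctly note that membership in $\Nil(\La(q))$ supplies a single $n$ witnessing both $M\in\La_n(q)$ and the nilpotency of $M^{\{n\}}$.
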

\begin{proof}
This follows from the similar claim for finite dimension: any nilpotent matrix $X\in\gl(n,q)$ is conjugated to a strictly upper triangular matrix. 
\end{proof}

It follows, in particular, that the set $\Nil(\La(q))$ is open and $GL(\infty,q)$-invariant. (It is $\GLB$-invariant, too.)

\begin{definition}\label{supp_pro}
Define $\PGL_0\subset\PGL$ as the subset of measures which are supported on $\Nil(\La(q))$.
Further, let $\B_0^{GL(\infty, q)}\subset\PGL_0$ be the subset of measures $P$ that are normalized by the condition $P(\La_0(q)) = 1$.
\end{definition}

The set $\PGL_0$ has the structure of a convex cone. Note that its extreme rays are also extreme rays of the ambient cone $\PGL$.
From Lemma \ref{lemma3.nil} it follows that if $P$ is a nonzero measure from $\PGL_0$, then $P(\La_0(q))>0$ (here we use the fact that $GL(\infty,q)$ is countable). This in turn implies that $\B_0^{GL(\infty, q)}$ is a base of the cone $\PGL_0$. 

In the next section, we give a description of measures from the cone $\PGL_0$. Since any nonzero measure from $\PGL_0$ can be uniquely expressed as $c\cdot P$, for some $P\in\B^{GL(\infty, q)}_0$ and $c>0$, our results also represent a description of measures from $\B^{GL(\infty, q)}_0$.
Later, in Section \ref{sect5}, we deduce from this a description of the cone $\PGL$ of general invariant measures on $\La(q)$.  

\begin{remark}\label{rem3.A}
Following \cite{GKV}, denote by $\mathbb U\subset\mathbb B$ the subgroup formed by the upper-unitriangular matrices. There is a natural bijection $\La_0(q)\to\mathbb U$ that assigns to each $M\in\La_0(q)$ the invertible matrix $1+M$. Given a measure $P\in\B^{GL(\infty, q)}_0$, the pushforward of $P\big|_{\La_0(q)}$ with respect to the bijection $\La_0(q)\to\mathbb U$ is a \emph{central probability measure} on $\mathbb U$ in the sense of \cite[Definition 4.3]{GKV}. In this way we obtain a bijective correspondence between $\B^{GL(\infty, q)}_0$ and the set of central probability measures on $\mathbb U$.  However, central measures on $\mathbb U$ catch only measures on pronilpotent matrices and leave aside the more general invariant measures on $\La(q)$ considered in Section \ref{sect5}.  
\end{remark}

\section{Description of $\PGL_0$}\label{sect4}

Describing $\PGL_0$ is equivalent to describing the set of nonnegative harmonic functions on certain branching graph. After making this connection, we invoke the recently proved Kerov's conjecture (Matveev \cite{Mat}) to obtain the desired description (Theorem \ref{thm4.A}).

Our construction of ergodic normalized measures supported on pronilpotent matrices essentially coincides with the construction of ergodic central measures on $\mathbb U$, as given in \cite[Theorem 4.6]{GKV}.

\subsection{Generalities on branching graphs}

We shall consider several branching graphs and the space of harmonic functions on them. The general notions, at the level of generality that we will need, are encapsulated in the following definition.

\begin{definition}\label{def4.A1}
By a \emph{branching graph} $\Ga$ we mean a graph with graded vertex set $\bigsqcup_{n=0}^\infty\Ga_n$ and formal edge multiplicities (or \emph{weights}), subject to the following conditions:

--- all levels $\Ga_n$ are finite nonempty sets and $\Ga_0$ is a singleton;

--- only vertices of adjacent levels can be joined by an edge;

--- each vertex of level $n\in\Z_{\geq 0}$ is joined with at least one vertex of level $n+1$;

--- each vertex of level $n\in\Z_{\geq 1}$ is joined with at least one vertex of level $n-1$; 

--- all edges are simple and their weights are strictly positive real numbers. 
\end{definition}

With a slight abuse of notation, we use the same symbol $\Ga$ to denote both a branching graph and its vertex set. 

\begin{example} Recall the definition of the \emph{Young graph} $\Y$: its vertices are arbitrary partitions (=Young diagrams) and the edges are formed by pairs $\mu\nearrow\la$  of diagrams, where
$$
\mu\nearrow \la \; \Leftrightarrow\; \text{$|\la|=|\mu|+1$ and $\mu\subset\la$}.
$$ 
By definition, all edge weights of $\Y$ equal $1$. In this section, we will deal with certain branching graphs which differ from $\Y$ by a different system of edge weights. In Section \ref{sec:ennola} some new graphs will appear.
\end{example}

\begin{definition}
Let $\Ga$ be a branching graph. Given two vertices, $v_n\in\Ga_n$ and $v_{n+1}\in\Ga_{n+1}$,  forming an edge, let $W^{n+1}_n(v_{n+1},v_n)$ stand for the corresponding weight.  

(1) A \emph{harmonic function} on $\Ga$ is a real-valued function $\vp$ on $\Ga$  such that
$$
\vp(v_n) = \sum_{v_{n+1}\in \Ga_{n+1}}{W^{n+1}_n(v_{n+1}, v_n) \vp(v_{n+1})}, \quad v_n\in \Ga_n,\ n\in\Z_{\geq 0}.
$$

(2) The set of \emph{nonnegative} harmonic functions on $\Ga$ will be denoted by $\Harm_+(\Ga)$. Note that it is a convex cone. 

(3) Let $v_0$ denote the \emph{root} of $\Ga$ ---  the only vertex of level $0$. The subset of functions $\vp\in\Harm_+(\Ga)$ normalized by the condition $\vp(v_0)=1$ will be denoted by $\Harm_1(\Ga)$. Note that it is a convex set, which serves as a base of the cone $\Harm_+(\Ga)$.  

(4) The set $\ex(\Harm_1(\Ga))$ of \emph{extreme points} of the convex set $\Harm_1(\Ga)$ is called the \emph{boundary} of $\Ga$. 
\end{definition}

Let $\R^\Ga$ be the space of arbitrary real-valued functions on $\Ga$, equipped with the topology of pointwise convergence. 

\begin{proposition}\label{prop4.A}
The followings claims hold true.

{\rm(1)} $\Harm_1(\Ga)$ is a nonempty compact subset of\/ $\R^\Ga$.

{\rm(2)} The boundary $\ex(\Harm_1(\Ga))\subset \Harm_1(\Ga)$ is a nonempty subset of type $G_\de$, hence a Borel set.

{\rm(3)} There exists a bijective correspondence $\vp\leftrightarrow \m$ between functions $\vp\in\Harm_1(\Ga)$ and probability Borel measures $\boldsymbol{m}$ on the boundary $\ex(\Harm_1(\Ga))$, given by
$$
\vp(v)=\int_{\psi\in \ex(\Harm_1(\Ga))}\psi(v) \boldsymbol{m}(d\psi), \qquad v\in\Ga.
$$
\end{proposition}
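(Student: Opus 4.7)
The plan is to establish the three parts in order, combining a Tychonoff compactness argument for parts (1)--(2) with classical Choquet theory for part (3). The whole argument goes through cleanly because the countability of the vertex set $\Ga$ makes $\R^\Ga$ metrizable.

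For part (1), I will introduce the root-to-vertex weight $D(v)$, defined as the sum, over all upward paths $v_0 \to v_1 \to \cdots \to v$ in $\Ga$, of the products of the edge weights along the path. The hypothesis that every vertex of level $\ge 1$ has at least one downward neighbor ensures $D(v) > 0$ for every $v$. Iterating the harmonicity relation starting from the root gives the identity $1 = \vp(v_0) = \sum_{u \in \Ga_n} D(u)\vp(u)$ at every level $n$, yielding the uniform bound $0 \le \vp(v) \le 1/D(v)$ on all of $\Harm_1(\Ga)$. Hence $\Harm_1(\Ga)$ is contained in the compact product space $\prod_{v \in \Ga}[0, 1/D(v)]$, and it is closed there because each harmonicity equation involves only a finite sum (as $\Ga_{n+1}$ is finite) and is preserved under pointwise convergence. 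For nonemptiness, I would construct, for each $N$, an auxiliary function $\vp_N$ on vertices of level $\le N$ by $\vp_N(v) := D_N(v)/D_N(v_0)$, where $D_N(v)$ is the total weight of upward paths from $v$ to level $N$ (extend by $0$ above level $N$). Each $\vp_N$ satisfies harmonicity at every level $< N$ and lies in the same compact product, so by compactness one passes to a pointwise cluster point, which automatically satisfies harmonicity at all levels and is normalized at $v_0$.

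For part (2), nonemptiness of $\ex(\Harm_1(\Ga))$ is immediate from the Krein--Milman theorem applied to the nonempty compact convex subset $\Harm_1(\Ga)$ of the locally convex space $\R^\Ga$. Since $\Ga$ is countable, $\R^\Ga$ is metrizable, so $\Harm_1(\Ga)$ is a metrizable compact convex set. The $G_\de$ property is then a classical general fact in this setting, provable by expressing $\ex(\Harm_1(\Ga))$ as the countable intersection of the open sets $\{\vp \in \Harm_1(\Ga) : \vp \text{ is not the midpoint of two points of } \Harm_1(\Ga) \text{ at mutual distance} \ge 1/n\}$ (cf.\ Phelps \cite{Phelps}).

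For part (3), I will identify each $\vp \in \Harm_1(\Ga)$ with the coherent sequence of probability measures $\m_n$ on the finite sets $\Ga_n$ given by $\m_n(u) := D(u)\vp(u)$. The harmonicity of $\vp$ is equivalent to the compatibility relation $\m_n(v) = \sum_{u \in \Ga_{n+1}} p^{n+1}_n(v \mid u)\, \m_{n+1}(u)$ under the stochastic kernels $p^{n+1}_n(v \mid u) := D(v) W^{n+1}_n(u,v)/D(u)$, and the correspondence $\vp \mapsto (\m_n)_{n \ge 0}$ is an affine homeomorphism onto the projective limit of the finite-dimensional probability simplices on the $\Ga_n$'s. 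Such a projective limit under stochastic transition kernels is itself a Choquet simplex, so the metrizable form of Choquet's integral representation theorem then yields both the existence and the uniqueness of the representing probability measure $\m$ on $\ex(\Harm_1(\Ga))$, giving the stated bijection. I expect the main obstacle to be precisely this simplex property: it is the one step where a nontrivial abstract theorem must be invoked, since without it Choquet's theorem gives only existence, not uniqueness, of the representing measure.
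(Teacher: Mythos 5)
Your proposal is correct and in substance follows the same route as the paper, whose own proof consists of citations to the Krein--Milman theorem, Phelps's classical $G_\de$ fact for metrizable compact convex sets \cite{Phelps}, and Choquet theory via \cite{Ols-2003a}, the latter treating exactly the projective limits of finite-dimensional probability simplices with stochastic links to which you reduce part (3). Your write-up simply supplies the details the paper delegates to those references: the uniform bound $\vp(v)\le 1/D(v)$ giving compactness, the limiting construction showing $\Harm_1(\Ga)\ne\emptyset$ (which the paper calls evident), and the simplex property of the projective limit that yields uniqueness of the representing measure.
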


This formula also establishes an isomorphism between the cone $\Harm_+(\Ga)$ and the cone of finite Borel measures on the boundary. 

\begin{proof}
Claim (1) is evident. For claim (2), see Phelps \cite[Proposition 1.3]{Phelps} (the fact that the boundary is nonempty follows from the Krein--Milman theorem). Claim (3) can be deduced from Choquet's theorem \cite[Sect. 10]{Phelps}: see, e.g. \cite[Theorem 9.2]{Ols-2003a}.
\end{proof}

\begin{definition}\label{def4.sim}
Let $\Ga$ and $\Ga'$ be two branching graphs with common sets of vertices and edges, but different systems of edge weights, $\{W^{n+1}_n\}$ and $\{(W')^{n+1}_n \}$. Following Kerov \cite[Ch. 1, Sect. 2.2]{Ke2}, we say that $\Ga$ and $\Ga'$ are \emph{similar} if there exists a positive real-valued function $f$ on the vertex set, such that
\begin{equation}\label{sim_weights}
(W')^{n+1}_n(v_{n+1}, v_n) = W^{n+1}_n(v_{n+1}, v_n) \cdot \frac{f(v_{n})}{f(v_{n+1})}.
\end{equation}
Then $f$ is called the \emph{gauge function from $\Gamma$ to $\Gamma'$}. With no lost of generality we may (and will) assume that $f(v_0)=1$.
\end{definition}

The following lemma is used several times. We omit the proof.

\begin{lemma}\label{homol_graphs}
The convex cones of nonnegative harmonic functions on two similar branching graphs are affine-isomorphic. More explicitly, if\/ $\Gamma$ is similar to $\Gamma'$ and $f$ is the gauge function from $\Gamma$ to $\Gamma'$, then $\vp \mapsto\vp' := \vp\cdot f$ defines an affine-isomorphism $\Harm_+(\Gamma) \stackrel{\cong}{\to}\Harm_+(\Gamma')$ as well as 
an isomorphism of convex sets $\Harm_1(\Gamma) \stackrel{\cong}{\to}\Harm_1(\Gamma')$.
\end{lemma}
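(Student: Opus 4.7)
The plan is to verify that the map $\vp \mapsto \vp' := \vp \cdot f$ carries harmonic functions on $\Gamma$ to harmonic functions on $\Gamma'$ by direct substitution, and then observe that the remaining assertions (positivity, normalization, bijectivity) follow automatically from the positivity of $f$ and the convention $f(v_0)=1$.

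The main computation is to check the harmonicity condition for $\vp'$ at an arbitrary vertex $v_n \in \Gamma_n$. Starting from the harmonicity of $\vp$ on $\Gamma$, I would multiply both sides by $f(v_n)$ and use the defining relation \eqref{sim_weights} to rewrite $f(v_n)$ as $(W')^{n+1}_n(v_{n+1}, v_n) \cdot f(v_{n+1}) / W^{n+1}_n(v_{n+1}, v_n)$ inside the sum. The weights $W^{n+1}_n(v_{n+1}, v_n)$ then cancel against those coming from the harmonicity of $\vp$, leaving precisely
\[
\vp'(v_n) = \sum_{v_{n+1}\in \Gamma_{n+1}} (W')^{n+1}_n(v_{n+1}, v_n)\, \vp'(v_{n+1}),
\]
which is harmonicity for $\Gamma'$.

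Next, since $f$ takes strictly positive values, the map $\vp \mapsto \vp f$ is clearly linear, and $\vp \ge 0$ holds if and only if $\vp' \ge 0$. Its inverse is evidently $\psi \mapsto \psi/f$, which by the same computation (with the roles of $\Gamma$ and $\Gamma'$ swapped, noting that $1/f$ is the gauge from $\Gamma'$ back to $\Gamma$) lands in $\Harm_+(\Gamma)$. This gives the affine isomorphism $\Harm_+(\Gamma) \cong \Harm_+(\Gamma')$. Finally, the normalization $\vp(v_0)=1$ is preserved because $\vp'(v_0) = f(v_0)\vp(v_0) = \vp(v_0)$, using the convention $f(v_0)=1$ fixed in Definition \ref{def4.sim}; this yields the bijection $\Harm_1(\Gamma) \cong \Harm_1(\Gamma')$.

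There is no real obstacle here; the content of the lemma is a bookkeeping identity between the two weight systems, and the only thing to make sure of is that the factor $f(v_n)/f(v_{n+1})$ appearing in \eqref{sim_weights} is exactly what is needed to convert a $\vp$-harmonicity relation into a $\vp'$-harmonicity relation after multiplying through by $f(v_n)$. Once this is verified, all other claims are immediate consequences of $f$ being strictly positive and normalized at the root.
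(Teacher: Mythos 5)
Your proof is correct, and it is exactly the routine verification the paper has in mind (the paper explicitly omits the proof of this lemma): the gauge factor $f(v_n)/f(v_{n+1})$ in \eqref{sim_weights} converts the harmonicity relation for $\vp$ into that for $\vp'=\vp f$, positivity follows from $f>0$, the inverse is division by $f$ (with $1/f$ the gauge from $\Gamma'$ to $\Gamma$), and normalization is preserved since $f(v_0)=1$.
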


\subsection{HL-deformed  Young graph $\Y^{\HL}(t)$ and its boundary}\label{HL_sec}

Recall some terminology and results from Macdonald \cite[Ch. III]{Mac}.
Let $t$ be any real number in the interval $(0, 1)$.
Denote by $\Sym$ the graded $\R$-algebra of symmetric functions.
It is known that $\Sym$ is freely generated by $1$ and the power-sums $p_1, p_2, \cdots$.
Moreover, $\Sym$ has a basis with elements parametrized by partitions $\la\in\Y$, consisting of the \emph{Hall-Littlewood functions} $P_{\la}(; t)$.
For conciseness, we shall abbreviate Hall-Littlewood by HL.
The HL function $P_{\la}(; t)$ is homogeneous of degree $|\la|$.
We also need the $Q$-version of the HL functions, denoted $Q_{\la}(; t)$:
\begin{equation}\label{b_def}
Q_{\la}(; t) := b_{\la}(t) P_{\la}( ; t),\qquad b_{\la}(t) := \prod_{i \geq 1}{\prod_{1 \leq j \leq m_i(\la)}{(1 - t^j)}}.
\end{equation}

The following is the simplest \emph{Pieri rule} for HL functions (see \cite[Ch. III, (5.7$'$), (5.8$'$)]{Mac}):
\begin{equation}\label{pieri_HL}
(1-t)p_1 \cdot Q_\mu(;t) = \sum_{\la :\, \mu\nearrow\la}\psi_{\la/\mu}(t) Q_\la(;t),
\end{equation}
where the coefficients $\psi_{\la/\mu}(t)$ are defined as follows: let $k = k(\la/\mu)$ be the column number of the single box that differs $\la$ from $\mu$; then 
\begin{equation}\label{psi_def}
\psi_{\la/\mu}(t) := \begin{cases}
1 - t^{m_{k-1}(\mu)} & \text{if }k > 1,\\
1 & \text{if }k = 1.
\end{cases}
\end{equation}
Observe that all coefficients $\psi_{\la/\mu}(t)$ are strictly positive.

It is convenient for us to rewrite \eqref{pieri_HL} in a slightly modified form:
\begin{equation}\label{eq4.D}
p_1 \cdot \frac{Q_\mu(;t)}{(1-t)^{|\mu|}} = \sum_{\la :\, \mu\nearrow\la}\psi_{\la/\mu}(t) \frac{Q_\la(;t)}{(1-t)^{|\la|}}.  
\end{equation}

\begin{definition}\label{def4.HL}
The \emph{HL-deformed Young graph $\Y^{\HL}(t)$} with parameter $t\in(0,1)$ is the branching graph with the same vertex and edge sets as in the Young graph $\Y$ and with the weights $\psi_{\la/\mu}(t)$ assigned to the edges $\mu\nearrow\la$. 
\end{definition}

Thus, harmonic functions $\vp$ on $\Y^\HL$ are defined by the relations
\begin{equation}\label{coherenceGL}
\vp(\mu) = \sum_{\la :\, \mu\nearrow\la}\psi_{\la/\mu}(t)\vp(\la),\quad\mu\in\Y.
\end{equation}

\begin{remark}\label{rem4.A}
There is a one-to-one correspondence $\varphi\leftrightarrow \Phi$ between harmonic functions $\vp\in\Harm_1(\Y^\HL(t))$ and linear functionals $\Phi:\Sym\to\R$ satisfying the conditions
\medskip

$\bullet$ $\Phi(p_1F)=\Phi(F)$ for any $F\in\Sym$ (harmonicity);
\smallskip

$\bullet$ $\Phi$ is nonnegative on the convex cone $C^\HL(t)\subset\Sym$ spanned by the functions $Q_\la(;t)$, $\la\in\Y$ (positivity);
\smallskip

$\bullet$ $\Phi(1)=1$ (normalization).
\smallskip

\noindent This correspondence is given by (compare \eqref{eq4.D} with \eqref{coherenceGL}):
$$
\vp(\la)=\Phi\left(\frac{Q_\la(;t)}{(1-t)^{|\la|}}\right), \quad \la\in\Y.
$$
\end{remark}

We describe the boundary of $\Y^\HL(t)$ in the next proposition. To state it, we need a little preparation.  
Let $\R_{\ge0}$ be the set of nonnegative real numbers and $\R_{\ge0}^\infty$ be the direct product of countably many copies of $\R_{\ge0}$, equipped with the product topology.

\begin{definition}\label{def4.A}
Let $\Om(t)$ be the set of pairs $\om=(\al,\be)\in\R_{\ge0}^\infty\times\R_{\ge0}^\infty$ such that 
$$
\al_1 \geq \al_2 \geq \dots \geq 0, \quad \be_1 \geq \be_2 \geq \dots \geq 0,\quad 
 \sum_{i = 1}^{\infty}{\al_i} + (1-t)^{-1}\sum_{i = 1}^{\infty}{\be_i} \le1.
$$
Note that $\Om(t)$ is a compact set. 
\end{definition}

Let $C(\Om(t))$ be the algebra of continuous functions on $\Om(t)$ with pointwise operations. We are going to define an algebra morphism $\Sym\to C(\Om(t))$. Since $\Sym$ is freely generated by the power-sums $p_1, p_2, \cdots$, it suffices to specify their images --- the functions $p_k(\om)$. We set
\begin{equation}\label{eq4.A}
p_1(\om)\equiv1, \qquad p_k(\om)=\sum_{i=1}^\infty\al_i^k +(-1)^{k-1}(1-t^k)^{-1}\sum_{i=1}^\infty \be_i^k, \quad k\ge2.
\end{equation}
It is readily checked that the functions $p_k(\om)$ are continuous. In this way we turn any element $F\in\Sym$ into a continuous function $F(\om)$ on $\Om(t)$. In particular, the HL functions $Q_\la(;t)\in\Sym$ are turned into continuous functions on $\Om$, which will be denoted by $Q_\la(\om;t)$. 
Let us call \eqref{eq4.A} the \emph{$\om$-specialization of the algebra $\Sym$}.

\begin{remark}[cf. Bufetov--Petrov \cite{BP}, Remark 2.6]\label{rem4.BP}
Let us associate with each $\om=(\al,\be)\in\Om(t)$ the triple $(\al,\wt\be,\wt\ga)$, where
$$
\wt\ga:=1-\sum_i \al_i-(1-t)^{-1}\sum_i\be_i\ge0
$$
and $\wt\be$ is the following double infinite collection of parameters: 
$$
\wt\be:=\{\be_{ij}: i,j=1,2,\dots\}, \quad \be_{ij}:=\be_it^{j-1}
$$
(cf. Macdonald \cite[Ch. III, \S2, Example 7]{Mac}). 
In this notation, \eqref{eq4.A} can be rewritten as 
\begin{equation}\label{eq4.B}
p_1(\om)=\sum_{i=1}^\infty\al_i+\sum_{i,j=1}^\infty\be_{ij}+\wt\ga; 
\qquad p_k(\om)=\sum_{i=1}^\infty \al_i^k+(-1)^{k-1}\sum_{i,j=1}^\infty \be_{ij}^k, \qquad k\ge2.
\end{equation}

The formal specialization \eqref{eq4.B} turns any symmetric function $F\in\Sym$ into an \emph{extended symmetric function} in the variables $(\al,\wt\be,\wt\ga)$, in the terminology of Vershik--Kerov \cite[Sect. 6]{VK-1990}.  
\end{remark}

\begin{proposition}[Matveev]\label{prop4.B}
Recall that $t$ is a fixed number in $(0,1)$ and $\Y^\HL(t)$ is the HL-deformed Young graph with parameter $t$. The points in the boundary of\/ $\Y^\HL(t)$ are parametrized by the elements $\om\in\Om(t)$. Specifically, given $\om\in\Om(t)$, the corresponding extreme harmonic function $\vp_\om\in\ex(\Harm_1(\Y^\HL(t)))$ is given by 
$$
\vp_\om(\la):=\frac{Q_\la(\om;t)}{(1-t)^{|\la|}}, \quad \la\in\Y.
$$
\end{proposition}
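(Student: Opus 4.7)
The plan is to split the proof into two parts: an easy direction, showing that the candidate functions $\vp_\om$ are indeed extreme and normalized, and the hard direction (Matveev's theorem proper), showing they exhaust the boundary. Throughout I work via the bijection of Remark \ref{rem4.A}, identifying $\vp \in \Harm_1(\Y^\HL(t))$ with a linear functional $\Phi: \Sym \to \R$ that is $p_1$-harmonic, normalized by $\Phi(1) = 1$, and nonnegative on the cone $C^\HL(t) \subset \Sym$ spanned by $\{Q_\la(\,;t)\}_{\la \in \Y}$.

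For the easy direction, harmonicity of $\vp_\om$ is immediate from the Pieri identity \eqref{eq4.D}: applying the $\om$-specialization \eqref{eq4.A} termwise and using $p_1(\om) \equiv 1$ produces exactly the coherence relation \eqref{coherenceGL} for $\vp_\om(\la) = Q_\la(\om;t)/(1-t)^{|\la|}$. Normalization $\vp_\om(\emptyset) = 1$ holds since $Q_\emptyset \equiv 1$. For nonnegativity, I would pass to the expanded parametrization of Remark \ref{rem4.BP}: in the variables $(\al_i, \be_{ij}, \wt\ga)$, which are all nonnegative whenever $\om \in \Om(t)$, the value $Q_\la(\om;t)$ is nonnegative because $Q_\la(\,;t)$ has a tableau expansion with coefficients that are nonnegative for $t \in (0,1)$ (Macdonald \cite{Mac}, Ch.~III). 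Extremality of $\vp_\om$ then follows from the analogue of the Vershik-Kerov ring theorem: extreme $\vp$'s correspond to \emph{multiplicative} functionals $\Phi$, and $\Phi_\om$ is multiplicative by construction, since \eqref{eq4.A} defines an algebra homomorphism $\Sym \to \R$. Injectivity of $\om \mapsto \vp_\om$ follows by recovering the parameters $(\al_i)$ and $(\be_i)$ from the moments $\Phi_\om(p_k)$, $k \ge 2$, via a standard inversion of \eqref{eq4.A}.

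The hard direction is Matveev's theorem: every multiplicative, $C^\HL(t)$-positive, normalized functional $\Phi$ on $\Sym$ coincides with some $\Phi_\om$. The strategy I would follow is the Vershik-Kerov ergodic method adapted to the HL-deformed setting (cf.\ Borodin-Corwin \cite{BorC}, Sect.~2.2.1). For each $n$, form a probability measure $M_n$ on $\Y_n$ by $M_n(\la) := \vp(\la) \cdot d_n(\la)$, where $d_n(\la)$ is the total $\psi$-weight of paths from $\emptyset$ to $\la$ in $\Y^\HL(t)$; the family $(M_n)$ is coherent, and extremality of $\vp$ translates into a law of large numbers for $(M_n)$. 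One then shows that the Frobenius-type coordinates of a random $\la$ drawn from $M_n$, suitably rescaled by $n$, concentrate on a single deterministic point $\om \in \Om(t)$, and that the asymptotics of the corresponding moments match \eqref{eq4.A}, forcing $\Phi = \Phi_\om$.

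The main obstacle is precisely this asymptotic analysis: establishing concentration of $M_n$, verifying that the limiting parameters satisfy the defining inequality $\sum_i \al_i + (1-t)^{-1}\sum_i \be_i \le 1$ of $\Om(t)$, and ruling out escape of mass at infinity. Matveev's decisive input is a rigidity argument for the generating series encoding the values $\Phi(p_k)$: positivity on the cone $C^\HL(t)$ constrains its meromorphic structure so strongly that only series arising from an $\om$-specialization of the form \eqref{eq4.A} are admissible. Once such rigidity is in hand, extracting the parameters and verifying that they lie in $\Om(t)$ becomes comparatively routine.
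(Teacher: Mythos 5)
The paper offers no proof of this proposition at all: it is quoted as a special case of Proposition 1.6 in Matveev \cite{Mat}, with the construction of the candidate functions going back to Kerov (cf.\ \cite{Ke1}, \cite{BorC}). So the only part of your write-up that can be checked against an actual argument is the ``easy'' direction, and there your outline is essentially Kerov's construction and is sound in spirit, with two caveats. First, nonnegativity of $Q_\la(\om;t)$ does not follow from ``a tableau expansion with nonnegative coefficients'': the $\om$-specialization \eqref{eq4.A} has the signed dual part $(-1)^{k-1}(1-t^k)^{-1}\sum_i\be_i^k$, so positivity on the $\be$-variables needs a genuine argument (e.g.\ via the expanded parameters of Remark \ref{rem4.BP} together with the skew expansions/duality for HL functions, or by invoking Kerov's general positivity result). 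Second, the equivalence ``extreme $\Leftrightarrow$ multiplicative'' is not automatic; it is the Vershik--Kerov ring theorem and requires the nonnegativity of the structure constants of the $Q_\la(\,;t)$ basis for $t\in(0,1)$, which holds but must be cited or checked.

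The genuine gap is the completeness direction, which is the entire content of Matveev's theorem. Your proposal does not prove it: after sketching a Vershik--Kerov ergodic scheme you concede that the concentration analysis is the obstacle and then appeal to ``Matveev's decisive rigidity argument'' --- that is, you invoke the very result you are supposed to establish, so as a proof the argument is circular. Moreover, the route you sketch (law of large numbers for the coherent measures $M_n$ and identification of limiting Frobenius-type coordinates) is not known to work in the HL-deformed setting; the absence of usable dimension asymptotics is precisely why Kerov's conjecture remained open for some twenty-five years, and Matveev's actual proof is an algebraic--combinatorial analysis of Macdonald-positive specializations rather than an ergodic-method concentration argument. Since the paper itself simply cites \cite{Mat} at this point, the defensible versions of your text are either to do the same for the completeness statement, or to genuinely reproduce Matveev's argument --- the sketch as written does neither.
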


This is a special case of Proposition 1.6 in Matveev \cite{Mat}. 

\begin{corollary}\label{cor4.A}
The cone $\Harm_+(\Y^\HL(t))$ is isomorphic to the cone of finite Borel measures on the compact space $\Om(t)$. Specifically, the nonnegative harmonic functions on $\Y^\HL(t)$ are precisely the functions of the form
$$
\vp(\la)=\frac1{(1-t)^{|\la|}}\int_{\om\in\Om(t)} Q_\la(\om;t) \boldsymbol{m}(d\om), \qquad \la\in\Y,
$$
where $\boldsymbol{m}$ is a finite Borel measure on $\Om(t)$. 
\end{corollary}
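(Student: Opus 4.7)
The plan is to combine Proposition \ref{prop4.A}(3), which provides a Choquet-type integral representation of normalized nonnegative harmonic functions by probability measures on the boundary, with Matveev's Proposition \ref{prop4.B}, which identifies that boundary with the compact space $\Om(t)$.

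First I would reduce the problem from $\Harm_+(\Y^\HL(t))$ to $\Harm_1(\Y^\HL(t))$. Setting $\mu=\emptyset$ in the harmonicity relation \eqref{coherenceGL} gives $\vp(\emptyset)=\vp((1))$, and iterating on larger $\mu$, using strict positivity of the weights $\psi_{\la/\mu}(t)$ and nonnegativity of $\vp$, shows that $\vp(\emptyset)=0$ forces $\vp\equiv 0$. Hence any nonzero $\vp\in\Harm_+(\Y^\HL(t))$ admits a unique decomposition $\vp=c\cdot\vp^{\mathrm{n}}$ with $c:=\vp(\emptyset)>0$ and $\vp^{\mathrm{n}}\in\Harm_1(\Y^\HL(t))$. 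Establishing the formula for $\vp^{\mathrm{n}}$ with $\boldsymbol{m}$ a probability measure and then scaling by $c$ (which becomes the total mass) will suffice.

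Next I would transfer measures from the boundary to $\Om(t)$ through the map $\iota\colon\Om(t)\to\ex(\Harm_1(\Y^\HL(t)))$, $\om\mapsto\vp_\om$, provided by Proposition \ref{prop4.B}. For each fixed $\la\in\Y$, the function $\om\mapsto\vp_\om(\la)=Q_\la(\om;t)/(1-t)^{|\la|}$ is a polynomial in the continuous functions $p_k(\om)$ specified by \eqref{eq4.A}, hence continuous on $\Om(t)$. Therefore $\iota$ is continuous when $\ex(\Harm_1(\Y^\HL(t)))$ carries the subspace topology inherited from $\R^\Y$ with pointwise convergence. Since $\Om(t)$ is compact and $\R^\Y$ is Hausdorff, the continuous bijection $\iota$ is automatically a homeomorphism onto its image, and in particular a Borel isomorphism onto the boundary.

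Finally, for $\vp^{\mathrm{n}}\in\Harm_1(\Y^\HL(t))$, Proposition \ref{prop4.A}(3) yields a unique probability Borel measure $\boldsymbol{n}$ on the boundary with $\vp^{\mathrm{n}}(\la)=\int\psi(\la)\,\boldsymbol{n}(d\psi)$. Pulling $\boldsymbol{n}$ back via $\iota$ produces a probability Borel measure $\boldsymbol{m}:=\iota^{-1}_*\boldsymbol{n}$ on $\Om(t)$, and substituting $\psi=\vp_\om$ gives the displayed formula. Scaling back by $c=\vp(\emptyset)$ extends it to all of $\Harm_+(\Y^\HL(t))$ with $\boldsymbol{m}$ an arbitrary finite Borel measure; uniqueness and the converse direction follow from the corresponding statements in Proposition \ref{prop4.A}(3) combined with the bijectivity of $\iota$. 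The main point requiring care is the verification that $\iota$ is a Borel isomorphism, since \emph{a priori} the boundary is only a $G_\de$ subset (Proposition \ref{prop4.A}(2)); compactness of $\Om(t)$ is precisely what makes this step go through automatically and even upgrades $\iota$ to a homeomorphism.
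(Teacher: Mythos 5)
Your proposal is correct and follows essentially the same route as the paper: combine Proposition \ref{prop4.A}(3) with Proposition \ref{prop4.B}, and handle the measurable-transfer issue by noting that the parametrization $\om\mapsto\vp_\om$ is continuous, hence a homeomorphism (so a Borel isomorphism) onto the boundary because $\Om(t)$ is compact. The additional reduction from $\Harm_+$ to $\Harm_1$ via the value at $\emptyset$ is the standard cone-base argument already implicit in the paper's remark following Proposition \ref{prop4.A}.
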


\begin{proof}
This follows from Proposition \ref{prop4.B} combined with Proposition \ref{prop4.A}. A subtle point: we must also make sure that the bijection $\Om(t)\to \ex(\Harm_1(\Y^\HL(t)))$ established in Proposition \ref{prop4.B} is a Borel isomorphism. But this follows from the fact that this map is continuous and hence a homeomorphism, because $\Om(t)$ is compact.
\end{proof}

\subsection{The branching graph $\Ga^{GL(\infty, q)}$}\label{harmGL}

To each pair of diagrams $\mu\in\Y_n$ and $\la\in\Y_{n+1}$, we assign a number $\Li^{n+1}_n(\la, \mu)$ as follows. Pick a nilpotent matrix $X\in\Nil(\gl(n, q))$ of Jordan type $\mu$ and consider the augmented matrices $Y\in\gl(n+1,q)$ of the form 
\begin{equation}\label{eq4.F}
Y := \begin{bmatrix} X & x\\ 0 & 0 \end{bmatrix},\quad x\in\F^n.
\end{equation}
Any such matrix $Y$ is nilpotent. By definition, $\Li^{n+1}_n(\la, \mu)$ is the number of those $Y$'s that have Jordan type $\la$. The definition is correct because this number does not depend on the choice of $X\in\{\mu\}$. 

The next proposition provides an explicit formula for $\Li^{n+1}_n(\la, \mu)$. Its proof was given in Kirillov \cite[Sect. 2.3]{K} and Borodin  \cite[Theorem 2.3]{B}; we present it in Section \ref{sect9}.

\begin{proposition}\label{prop_borodin}
Let $n\in\Z_{\geq 0}$, $\mu\in\Y_n$, $\la\in\Y_{n+1}$.

{\rm(i)} $\Li^{n+1}_n(\la, \mu) = 0$ unless $\mu\nearrow\la$.

{\rm(ii)} Suppose $\mu\nearrow\la$ and denote by $k$ the column number of the single box in $\la\setminus\mu$. Then
\begin{equation}\label{eq4.C}
\Li^{n+1}_n(\la, \mu) = \begin{cases} q^{n - \sum_{j \geq k}{m_j(\mu)}}(1 - q^{-m_{k-1}(\mu)}),&\text{ if }k > 1,\\  q^{n-\sum_{j \geq 1}{m_j(\mu)}},&\text{ if } k = 1. 
\end{cases}
\end{equation}
\end{proposition}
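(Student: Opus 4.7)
The plan is to read off the Jordan type of $Y$ from the sequence of ranks $\rk(Y^r)$ and then count the vectors $x$ that produce each possible type. Realize $Y$ as a nilpotent endomorphism of $\F^{n+1} = \F^n \oplus \F\cdot e_{n+1}$ with $Y|_{\F^n} = X$ and $Ye_{n+1} = x$. A one-line induction yields $\Ran(Y^r) = \Ran(X^r) + \F\cdot X^{r-1}x$ for every $r \geq 1$, so
\[
\rk(Y^r) = \rk(X^r) + \eps_r, \qquad \eps_r := \begin{cases} 0, & X^{r-1}x \in \Ran(X^r), \\ 1, & X^{r-1}x \notin \Ran(X^r). \end{cases}
\]

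There is an upward monotonicity for the condition $X^{r-1}x \in \Ran(X^r)$: if $X^{r-1}x = X^r y$, then $X^r x = X^{r+1}y \in \Ran(X^{r+1})$. Since $X$ is nilpotent, the condition holds trivially for large $r$, so there is a unique smallest $k \geq 1$ with $X^{k-1}x \in \Ran(X^k)$; then $\eps_r = 1$ for $r < k$ and $\eps_r = 0$ for $r \geq k$. Converting via $\la_r' = \rk(Y^{r-1}) - \rk(Y^r)$ and $\mu_r' = \rk(X^{r-1}) - \rk(X^r)$ yields $\la_r' = \mu_r' + \delta_{r,k}$, so $\la$ is obtained from $\mu$ by inserting a single box in column $k$. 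This establishes (i) and identifies $k$ with the column number appearing in (ii).

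It remains to count the admissible $x \in \F^n$ for each fixed value of $k$. Set $U_r := \{x \in \F^n : X^{r-1}x \in \Ran(X^r)\}$ for $r \geq 1$, with $U_0 := \emptyset$. Each $U_r$ is the kernel of the linear map $\F^n \to \F^n/\Ran(X^r)$, $x \mapsto X^{r-1}x \bmod \Ran(X^r)$, whose image is $\Ran(X^{r-1})/\Ran(X^r)$ of dimension $\mu_r' = \sum_{j \geq r} m_j(\mu)$; hence $|U_r| = q^{n - \mu_r'}$. By the monotonicity above, $U_{k-1} \subset U_k$, so the count corresponding to a prescribed $k$ equals $|U_k| - |U_{k-1}|$. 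For $k = 1$ this evaluates to $q^{n - \mu_1'} = q^{n - \sum_{j} m_j(\mu)}$; for $k \geq 2$ it gives
\[
q^{n - \mu_k'} - q^{n - \mu_{k-1}'} = q^{n - \mu_k'}\bigl(1 - q^{-(\mu_{k-1}' - \mu_k')}\bigr) = q^{n - \sum_{j \geq k} m_j(\mu)}\bigl(1 - q^{-m_{k-1}(\mu)}\bigr),
\]
which matches (ii). The main subtlety — and essentially the only place requiring care — is the monotonicity step that singles out a unique column $k$ from the binary sequence $(\eps_r)$; the rest is bookkeeping with transpose partitions and dimensions.
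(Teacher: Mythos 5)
Your proof is correct and takes essentially the same route as the paper: the same rank increments $\eps_r=\rk Y^r-\rk X^r$, the same nested subspaces $U_r=\{x: X^{r-1}x\in\Ran X^r\}$ (the paper's $V_r$), and the same count $|U_k|-|U_{k-1}|$ for each column $k$. The only minor difference is that you get $\dim U_r=n-\mu'_r$ by rank--nullity applied to $x\mapsto X^{r-1}x \bmod \Ran(X^r)$, while the paper derives the same dimension by an explicit Jordan-basis computation (Lemma \ref{lemma9.B}).
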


Recall that the notation $m_i(\mu)$ is explained in \eqref{eq2.A}.  Note that the first expression is also applicable in the case $k=1$ if we agree that $m_0(\mu)=+\infty$ and so $q^{-m_0(\mu)} = 0$.

An important remark is that $\Li^{n+1}_n(\la, \mu)>0$ whenever $\mu\nearrow\la$. 

In the next lemma we connect $\Li^{n+1}_n(\la, \mu)$ with the coefficients in the Pieri rule \eqref{pieri_HL}. 

\begin{lemma}[cf. \cite{GKV}, p. 369]\label{lemma4.A}
For any Young diagrams $\mu\nearrow\la$  we have 
$$
\Li^{n+1}_n(\la, \mu)=\psi_{\la/\mu}(q^{-1})\cdot\dfrac{q^{n(\mu)-n(n-1)/2}}{q^{n(\la)-(n+1)n/2}}, \quad n:=|\mu|.
$$
\end{lemma}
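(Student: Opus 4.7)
The plan is a direct computation: match the explicit formula for $L^{n+1}_n(\la,\mu)$ supplied by Proposition \ref{prop_borodin} against the definition \eqref{psi_def} of $\psi_{\la/\mu}(t)$ at $t=q^{-1}$, and verify that the remaining $q$-power is exactly $q^{n(\mu)-n(\la)+n}$.

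First, I would simplify the ratio on the right-hand side:
\[
\frac{q^{n(\mu)-n(n-1)/2}}{q^{n(\la)-(n+1)n/2}} \;=\; q^{\,n(\mu)-n(\la)+n}.
\]
So the lemma amounts to showing that for any $\mu\nearrow\la$ with $n=|\mu|$,
\[
L^{n+1}_n(\la,\mu) \;=\; \psi_{\la/\mu}(q^{-1})\cdot q^{\,n+n(\mu)-n(\la)}.
\]

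Next, I would exploit the key identity about $n(\cdot)$: if $\la$ is obtained from $\mu$ by adding a single box at position $(r,k)$ (row $r$, column $k$), then
\[
n(\la)-n(\mu) \;=\; r-1 \;=\; \mu'_k \;=\; \sum_{j\geq k} m_j(\mu),
\]
since $\mu'_k$ is precisely the number of rows of $\mu$ having length $\geq k$, which equals the row of the new box minus one. In particular, $n-(n(\la)-n(\mu)) = n-\sum_{j\geq k}m_j(\mu)$, which is exactly the $q$-exponent appearing in Proposition \ref{prop_borodin} (i). In the special case $k=1$, the formula degenerates to $n-\ell(\mu)$, in agreement with the $k=1$ branch of \eqref{eq4.C}, using $r-1=\ell(\mu)$.

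With this exponent identified, the lemma reduces to matching the remaining scalar factor. In the case $k>1$, Proposition \ref{prop_borodin} contributes $(1-q^{-m_{k-1}(\mu)})$, which is exactly $\psi_{\la/\mu}(q^{-1})$ by \eqref{psi_def}. In the case $k=1$, Proposition \ref{prop_borodin} contributes no extra factor, again agreeing with $\psi_{\la/\mu}(q^{-1})=1$. Thus both branches line up and the equality follows.

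The only step with any subtlety is the combinatorial identity $n(\la)-n(\mu) = \mu'_k$; this I would justify either by the direct definition $n(\la)=\sum_i(i-1)\la_i$ (the added box contributes $r-1$) or by the dual formula $n(\la)=\sum_i\binom{\la'_i}{2}$, which makes the increment $\mu'_k$ transparent. Everything else is bookkeeping, so no real obstacle is expected.
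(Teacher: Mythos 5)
Your proposal is correct and follows essentially the same route as the paper: both reduce the identity, via Proposition \ref{prop_borodin} and \eqref{psi_def}, to the combinatorial fact $n(\la)-n(\mu)=\sum_{j\ge k}m_j(\mu)=\mu'_k$, which you justify directly from the definition of $n(\cdot)$ just as the paper does.
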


Recall that the quantity $n(\ccdot)$ is defined in \eqref{eq2.A}. 

\begin{proof}
Set $n:=|\mu|$. From the comparison of \eqref{eq4.C} with \eqref{psi_def} it follows that the desired relation is equivalent to the equality
$$
n - \sum_{j \geq k}{m_j(\mu)} \stackrel{?}{=} \left(n(\mu)-\frac{n(n-1)}2\right)-\left(n(\la)-\frac{(n+1)n}2\right),
$$
where $k$ is the column number of the unique box in $\la\setminus\mu$. After simplification, the equality is equivalent to
$$
n(\la)-n(\mu) \stackrel{?}{=} \sum_{j \geq k}{m_j(\mu)}.
$$
But this equality follows from the definition of the quantity $n(\la)$.
\end{proof} 

\begin{definition}
Let $\Ga^{GL(\infty,q)}$ be the branching graph whose vertex and edge sets are the same as in the Young graph $\Y$, and  the weights assigned to the edges $\mu\nearrow\la$ are the quantities  $\Li^{n+1}_n(\la, \mu)$, $n=|\mu|$.
\end{definition}

\begin{proposition}\label{HLgraph}
The nonnegative harmonic functions on the graph $\Ga^{GL(\infty, q)}$ are precisely the functions of the form
\begin{equation} 
\vp^{GL(\infty,q)}(\la)=\vp(\la)\cdot q^{n(\la)-\binom{|\la|}2}, \qquad \la\in\Y, 
\end{equation}
where $\vp$ is an arbitrary nonnegative harmonic function on the graph $\Y^\HL(q^{-1})$. 
\end{proposition}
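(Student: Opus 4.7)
The plan is to exhibit $\Ga^{GL(\infty,q)}$ as a branching graph \emph{similar} to $\Y^\HL(q^{-1})$ in the sense of Definition~\ref{def4.sim}, and then to conclude by invoking Lemma~\ref{homol_graphs}. First I would quote Lemma~\ref{lemma4.A}, which, for each edge $\mu\nearrow\la$ with $n=|\mu|$, already supplies the factorization
\[
\Li^{n+1}_n(\la,\mu) \;=\; \psi_{\la/\mu}(q^{-1})\cdot \frac{q^{n(\mu)-n(n-1)/2}}{q^{n(\la)-(n+1)n/2}}.
\]
Introducing the vertex function $f(\la):=q^{n(\la)-\binom{|\la|}{2}}$ and noting that $\binom{n}{2}=n(n-1)/2$ and $\binom{n+1}{2}=(n+1)n/2$, this identity rewrites as
\[
\Li^{n+1}_n(\la,\mu) \;=\; \psi_{\la/\mu}(q^{-1})\cdot \frac{f(\mu)}{f(\la)},
\]
which is exactly the similarity relation \eqref{sim_weights}, with $\Y^\HL(q^{-1})$ playing the role of $\Gamma$ and $\Ga^{GL(\infty,q)}$ playing the role of $\Gamma'$. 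The normalization $f(\emptyset)=q^0=1$ at the root holds automatically.

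Before applying Lemma~\ref{homol_graphs}, I would verify that $\Ga^{GL(\infty,q)}$ satisfies the axioms of a branching graph (Definition~\ref{def4.A1}). Its vertex and edge sets coincide with those of the Young graph $\Y$, so the grading, the finiteness of each level, and the vertex-connectivity conditions are inherited from $\Y$; the strict positivity of the weights $\Li^{n+1}_n(\la,\mu)$ on all edges $\mu\nearrow\la$ was noted immediately after Proposition~\ref{prop_borodin}.

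With these ingredients in place, Lemma~\ref{homol_graphs} supplies an affine isomorphism of convex cones $\Harm_+(\Y^\HL(q^{-1}))\xrightarrow{\cong}\Harm_+(\Ga^{GL(\infty,q)})$ given by $\vp\mapsto\vp\cdot f$, which is precisely the formula $\vp^{GL(\infty,q)}(\la)=\vp(\la)\cdot q^{n(\la)-\binom{|\la|}{2}}$ asserted in the proposition. Since every step is a direct invocation of a tool already established in the paper, I do not anticipate a genuine obstacle; the only point to handle carefully is the direction of the gauge in \eqref{sim_weights}, so that the factor $f$ (rather than its reciprocal) appears in the final formula.
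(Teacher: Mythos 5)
Your proposal is correct and follows essentially the same route as the paper: quote Lemma \ref{lemma4.A} to recognize $\Ga^{GL(\infty,q)}$ and $\Y^\HL(q^{-1})$ as similar graphs with gauge function $f(\la)=q^{n(\la)-\binom{|\la|}2}$, then apply Lemma \ref{homol_graphs}. Your extra checks (positivity of the weights $\Li^{n+1}_n$, the normalization $f(\emptyset)=1$, and the direction of the gauge in \eqref{sim_weights}) are exactly the points the paper leaves implicit, and you handle them correctly.
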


\begin{proof}
Lemma \ref{lemma4.A} shows that the graphs $\Y^\HL(q^{-1})$ and $\Ga^{GL(\infty,q)}$ are similar in the sense of Definition \ref{def4.sim}, with the gauge function from $\Y^\HL(q^{-1})$ to  $\Ga^{GL(\infty,q)}$ equal to 
\begin{equation}\label{eq4.f}
f(\la)=q^{n(\la)-\binom{|\la|}2}, \quad \la\in\Y.
\end{equation}
Then we apply Lemma \ref{homol_graphs}.
\end{proof} 

\subsection{Final result: description of measures}
Any measure $P\in\M^\glinfty_0$ is uniquely determined by its values on the elementary cylinder sets $\Cyl_n(X)$, where $X\in\gl(n,q)$ and $n\in\Z_{\ge0}$. Note that $P(\Cyl_n(X))=0$ unless $X$ is nilpotent.

\begin{lemma}\label{lemma4.B}
There is a bijective correspondence  between measures $P\in\M^\glinfty_0$ and nonnegative harmonic functions $\vp^\glinfty$ on the branching graph $\Ga^\glinfty$, uniquely characterized by the property that for any $n\in\Z_{\ge0}$ and any nilpotent matrix $X\in\gl(n,q)$ of a given Jordan type $\mu\in\Y_n$, one has
\begin{equation}\label{eq4.E}
P(\Cyl_n(X))=\vp^\glinfty(\mu).
\end{equation}
\end{lemma}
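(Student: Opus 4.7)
The plan is to verify both directions of the claimed bijection by direct bookkeeping with elementary cylinder sets. For the forward direction, given $P \in \M_0^{GL(\infty,q)}$, first observe that the value $P(\Cyl_n(X))$ depends only on the $\gl(n,q)$-conjugacy class of $X$. Indeed, for $g \in GL(n,q)$ embedded as the block-diagonal matrix $\tilde g = \diag(g, I_\infty) \in GL(\infty,q)$, a direct block-matrix computation on the structure $M = \begin{pmatrix} X & A \\ 0 & C \end{pmatrix}$ of matrices in $\La_n(q)$ gives $\tilde g\,\Cyl_n(X)\,\tilde g^{-1} = \Cyl_n(gXg^{-1})$, so $GL(\infty,q)$-invariance of $P$ forces $P(\Cyl_n(X)) = P(\Cyl_n(gXg^{-1}))$. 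Since $P$ is supported on pronilpotent matrices, $P(\Cyl_n(X))=0$ unless $X$ is nilpotent, in which case it depends only on the Jordan type $\mu$; thus setting $\vp^{GL(\infty,q)}(\mu) := P(\Cyl_n(X))$ is well-defined.

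Next I would check harmonicity on $\Ga^{GL(\infty,q)}$. Using the decomposition recalled in Subsection~3.2, $\Cyl_n(X) = \bigsqcup_Y \Cyl_{n+1}(Y)$ where $Y$ runs over the $q^n$ extensions $Y = \begin{pmatrix} X & x \\ 0 & 0 \end{pmatrix}$, $x \in \F^n$. When $X$ is nilpotent of Jordan type $\mu$, every such $Y$ is nilpotent, and by the very definition of $\Li^{n+1}_n(\la,\mu)$ in Subsection~\ref{harmGL} the number of $Y$'s of Jordan type $\la$ equals $\Li^{n+1}_n(\la,\mu)$. Countable additivity of $P$ then yields
$$
\vp^{GL(\infty,q)}(\mu) = P(\Cyl_n(X)) = \sum_{\la:\,\mu\nearrow\la} \Li^{n+1}_n(\la,\mu)\,\vp^{GL(\infty,q)}(\la),
$$
which is exactly the harmonicity relation on $\Ga^{GL(\infty,q)}$. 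Nonnegativity and finiteness of $\vp^{GL(\infty,q)}$ are immediate since $P$ is a Radon measure and elementary cylinders are compact.

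For the converse, given $\vp^{GL(\infty,q)} \in \Harm_+(\Ga^{GL(\infty,q)})$, define a set function on elementary cylinders by $P(\Cyl_n(X)) := \vp^{GL(\infty,q)}(\mu)$ when $X$ is nilpotent of Jordan type $\mu$, and $P(\Cyl_n(X)) := 0$ otherwise, then extend additively to finite disjoint unions. The key consistency check is that the two ways of evaluating $P$ on $\Cyl_n(X)$ — directly, or via refinement $\Cyl_n(X) = \bigsqcup_Y \Cyl_{n+1}(Y)$ — agree. In the nilpotent case this is precisely the harmonicity identity just displayed; in the non-nilpotent case both sides vanish, since the eigenvalues of any extension $Y$ are those of $X$ together with $0$, so $Y$ is also non-nilpotent. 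Lemma~\ref{lemma3.A} then promotes this finitely additive function to a Radon measure on $\La(q)$, supported on $\Nil(\La(q))$ because $P$ vanishes on $\La_n(q) \setminus \Nil(\La_n(q))$ for every $n$. Finally, $GL(\infty,q)$-invariance is checked exactly as in Proposition~\ref{prop3.A}: for $g \in GL(n_0,q)$ and any elementary cylinder of level $n \geq n_0$, the block-matrix identity above shows conjugation permutes cylinders $\Cyl_n(X) \leftrightarrow \Cyl_n(gXg^{-1})$ among matrices of the same Jordan type, on which $P$ takes equal values.

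There is no substantive obstacle; the statement is a translation lemma. The only mild subtlety is the careful block-structure argument that turns $GL(\infty,q)$-invariance into dependence solely on Jordan types, and the corresponding verification that the ``tail'' degrees of freedom in the definition of $\La_n(q)$ do not spoil the count used to identify the transition weights with $\Li^{n+1}_n(\la,\mu)$.
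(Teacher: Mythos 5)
Your argument is correct and follows essentially the same route as the paper's proof: define $\vp^{\glinfty}$ from $P$ via the level-$(n{+}1)$ refinement of $\Cyl_n(X)$ and the definition of $\Li^{n+1}_n(\la,\mu)$, and conversely build a finitely additive set function from a harmonic function and promote it to a Radon measure via Lemma \ref{lemma3.A}, with invariance checked as in Proposition \ref{prop3.A}. The extra details you supply (the block-conjugation identity and the vanishing on non-nilpotent cylinders) are exactly the points the paper leaves implicit.
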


\begin{proof}
Let $P\in\M^\glinfty_0$. Because $P$ is invariant, the left-hand side of \eqref{eq4.E} only depends on the Jordan type $\mu$ of $X$. The elementary cylinder $\Cyl_n(X)$ can also be viewed as a cylinder set of level $n+1$. As such, it splits into a disjoint union of elementary cylinders $\Cyl_{n+1}(Y)$, where $Y$ ranges over the set of matrices of the form \eqref{eq4.F}. This entails the equality
$$
P(\Cyl_n(X))=\sum_Y P(\Cyl_{n+1}(Y)).
$$
By the definition of the quantities $\Li^{n+1}_n(\la,\mu)$, among the $Y$'s in the sum, exactly $\Li^{n+1}_n(\la,\mu)$ of them are of Jordan type $\la$, for each $\la\in\Y_{n+1}$. Therefore, taking \eqref{eq4.E} as the definition of a function $\vp^\glinfty$, we see that this function is a nonnegative harmonic function on the graph $\Ga^\glinfty$. 

Conversely, let $\vp^\glinfty$ be a nonnegative harmonic function on the graph $\Ga^\glinfty$. Then the above argument shows that formula $\eqref{eq4.E}$ gives rise to a nonnegative, finitely-additive set function $P$ on cylinder sets contained in the set of pronilpotent matrices. By virtue of Lemma \ref{lemma3.A}, $P$ extends to a true Radon measure on $\La(q)$, which is supported on the set of pronilpotent matrices. Its invariance is evident from the very construction.

This completes the proof. 
\end{proof}

Combining Lemma \ref{lemma4.B}, Proposition \ref{HLgraph}, and  Corollary \ref{cor4.A}, we finally obtain the following description of the measures $P\in\M_0^\glinfty$. 

\begin{theorem}\label{thm4.A}
{\rm(1)} The measures $P\in\M_0^\glinfty$ are in one-to-one correspondence with the finite Borel measures $\boldsymbol{m}$ on the compact space $\Om(q^{-1})$.

\smallskip

{\rm(2)} Under this correspondence $P\leftrightarrow \boldsymbol{m}$, the mass of an elementary cylinder set $\Cyl_n(X)$, where $X\in\Nil(\gl(n,q))$ is a nilpotent matrix of Jordan type $\la\in\Y_n$, is given by 
$$
P(\Cyl_n(X))=\frac{q^{n(\la)-n(n-1)/2}}{(1-q^{-1})^n}\int_{\om\in\Om(q^{-1})} Q_\la(\om;q^{-1})\boldsymbol{m}(d\om).
$$ 

\smallskip

{\rm(3)} The mass $P(\La_0(q))$ is equal to the total mass of $\boldsymbol{m}$. Thus, the normalized measures $P$ {\rm(}i.e. the ones with  $P(\La_0(q))=1${\rm)} correspond to probability measures  $\boldsymbol{m}$.  

\smallskip

{\rm(4)} Normalized ergodic measures $P$ correspond to Dirac measures $\m$ and hence are parametrized by points $\om\in\Om(q^{-1})$. For such measures $P=P_\om$,  the above formula reduces to
\begin{equation}\label{eq4.I}
P_\om(\Cyl_n(X))=\frac{q^{n(\la)-n(n-1)/2}}{(1-q^{-1})^n} Q_\la(\om;q^{-1}), \qquad X\in\{\la\}, \quad \la\in\Y_n.
\end{equation}
\end{theorem}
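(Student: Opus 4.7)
The proof is essentially an assembly of the three previously established results, and the main work is to verify that the compositions of the bijections produce the explicit formula claimed in part (2).

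My plan is to chain together three correspondences as follows. First, applying Lemma \ref{lemma4.B}, I identify each measure $P\in\M^\glinfty_0$ with a nonnegative harmonic function $\vp^\glinfty$ on $\Ga^\glinfty$ via the rule $P(\Cyl_n(X))=\vp^\glinfty(\la)$ whenever $X\in\{\la\}$; this is a bijection of convex cones. Second, applying Proposition \ref{HLgraph}, I rewrite $\vp^\glinfty(\la)=f(\la)\vp(\la)$ where $f(\la)=q^{n(\la)-\binom{|\la|}{2}}$ is the gauge function of \eqref{eq4.f} and $\vp$ is a nonnegative harmonic function on $\Y^\HL(q^{-1})$; the gauge bijection is again an isomorphism of cones. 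Third, applying Corollary \ref{cor4.A}, I represent $\vp$ as the integral
\begin{equation*}
\vp(\la)=\frac1{(1-q^{-1})^{|\la|}}\int_{\om\in\Om(q^{-1})} Q_\la(\om;q^{-1})\,\boldsymbol{m}(d\om)
\end{equation*}
against a unique finite Borel measure $\boldsymbol{m}$ on $\Om(q^{-1})$. Composing these three bijections proves claim (1), and substituting the formulas into $P(\Cyl_n(X))=f(\la)\vp(\la)$ with $|\la|=n$ yields exactly the formula asserted in claim (2).

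For claim (3), I would simply evaluate at the root: taking $n=0$ gives $\Cyl_0(X_0)=\La_0(q)$ and $\la=\emptyset$, so $Q_\emptyset(\om;q^{-1})=1$ and the integral reduces to the total mass $\boldsymbol{m}(\Om(q^{-1}))$. Hence $P(\La_0(q))=1$ if and only if $\boldsymbol{m}$ is a probability measure.

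For claim (4), I would argue that since each of the three correspondences is an affine isomorphism of convex cones (mapping bases to bases in the normalized case), they preserve extreme rays. An extreme normalized measure $P\in\B_0^\glinfty$ therefore corresponds, under the three successive bijections, to an extreme harmonic function on $\Ga^\glinfty$, then to an extreme element of $\Harm_1(\Y^\HL(q^{-1}))$, and finally, by Proposition \ref{prop4.B}, to a point $\om\in\Om(q^{-1})$, i.e.\ to a Dirac mass $\boldsymbol{m}=\de_\om$. Specializing the formula from part (2) to $\boldsymbol{m}=\de_\om$ collapses the integral and gives \eqref{eq4.I}. Finally, I invoke the remark cited from Phelps \cite{Phelps} that extreme equals ergodic in $\PGL$ to conclude that these $P_\om$ exhaust the normalized ergodic measures.

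I do not foresee any serious obstacle: the three component results have already been proved, and the only nontrivial checks are bookkeeping of the $(1-q^{-1})^{-|\la|}$ and $q^{n(\la)-\binom{|\la|}{2}}$ prefactors (from the gauge function and the $Q_\la/(1-t)^{|\la|}$ normalization) and verifying that the bijection $\om\mapsto \vp_\om$ of Proposition \ref{prop4.B} is a Borel isomorphism — but the latter is already addressed in the proof of Corollary \ref{cor4.A} using compactness of $\Om(q^{-1})$.
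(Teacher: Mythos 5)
Your proposal is correct and follows exactly the paper's own route: the paper proves Theorem \ref{thm4.A} precisely by combining Lemma \ref{lemma4.B}, Proposition \ref{HLgraph}, and Corollary \ref{cor4.A}, and your bookkeeping of the gauge factor $q^{n(\la)-\binom{n}{2}}$ and the $(1-q^{-1})^{-n}$ normalization, as well as the identification of extreme/ergodic measures with Dirac masses via Proposition \ref{prop4.B}, matches the intended argument.
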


The last formula coincides with formula (1.3) in Bufetov--Petrov \cite{BP}, and with (4.3) in Gorin--Kerov--Vershik \cite{GKV}.

\subsection{Examples and remarks}
In this subsection, we use the standard notation
$$
(u;t)_n:=\prod_{j=1}^n(1-ut^{j-1}), \quad  (u;t)_\infty:=\prod_{j=1}^\infty (1-u t^{j-1}).
$$
We also employ the notation from Definition \ref{def4.A} and Remark \ref{rem4.BP}.  

We give some examples of normalized ergodic measures $P_\om$, which correspond to certain particular values of the parameters $\om=(\al,\be)\in\Om(q^{-1})$. We also write down the complementary parameter $\wt\ga=1-\sum \al_i-\sum (1-q^{-1})^{-1}\be_i$.  To simplify the notation throughout this subsection, we set $t:=q^{-1}$. 

\begin{example}[$\al\equiv0$, $\be=(1-t,0,0,\dots)$, $\wt\ga=0$]\label{ex5.A}
The corresponding normalized ergodic measure $P_\om$ is the Dirac measure at the point $0\in\La(q)$. This is the simplest (and trivial) example. The restriction of $P$ to an elementary cylinder $\Cyl_n(X)$ vanishes unless $X=0_n$, which is the unique matrix of type $(1^n)$.
\end{example}

\begin{example}[$\al=(1,0,0,\dots)$, $\be\equiv0$, $\wt\ga=0$]
For the corresponding normalized ergodic measure $P$, its restriction to an elementary cylinder $\Cyl_n(X)$ vanishes unless $X$ has type $\la=(n)$. The restriction of $P_\om$ to $\La_0(q)$ admits a nice description: in the matrix coordinates $m_{ij}$, $i<j$, this is a product measure, such that each coordinate $m_{i,i+1}$ does not vanish and is uniformly distributed  on $\F\setminus\{0\}$, while each coordinate $m_{ij}$ with $j\ge i+2$ is uniformly distributed on $\F$.  
\end{example}

\begin{example}[$\al=(1-t, (1-t)t, (1-t)t^2,\dots)$, $\be\equiv0$, $\wt\ga=0$]\label{ex4.Haar}
The normalized ergodic measure with these parameters will be denoted by $P^\Haar$. For this measure, formula \eqref{eq4.I} takes the simple form
$$
P^\Haar(\Cyl_n(X))=t^{n(n-1)/2}, \qquad X\in\{\la\}, \quad \la\in\Y_n,
$$
because $Q_\la(1,t,t^2,\dots; t)=t^{n(\la)}$ (see \cite[Ch. III, \S2, Ex. 1]{Mac}). From this expression, it is seen that $P^\Haar$ is the   restriction to $\Nil(\La(q))$ of the Haar measure on the additive group $\La(q)$, with the normalization $P^\Haar(\La_0(q))=1$. The Haar measure on $\La_0(q)$ was the subject of Borodin's work \cite{B}. 
\end{example}

\begin{example}[$\al\equiv0$, $\be\equiv0$, $\wt\ga=1$]
The specialization $\Sym\to\R$ with these parameters is the ``Plancherel specialization'' that sends $p_1$ to $1$ and all other $p_k$'s to $0$. For this reason we denote the corresponding normalized ergodic measure by $P^\Planch$. 
Formula \eqref{eq4.I} takes the form
$$
P^\Planch(\Cyl_n(Y))=\frac{t^{\frac{n(n-1)}2-n(\la)}}{n!}\,X^\la_{(1^n)}(t), \qquad Y\in\{\la\}, \quad \la\in\Y_n,
$$
where $X^\la_{(1^n)}(t):=\langle p_1^n,Q_\la(;t)\rangle_t$ and $\langle\ccdot,\ccdot \rangle_t$ is the HL inner product defined in \cite[Ch. III, \S4]{Mac}. It is known (see \cite[Ch. III, \S7]{Mac}) that the $X^\la_{(1^n)}(t)$ are polynomials in $t$, which can be written as certain combinatorial sums (\cite[Ch. III, \S7, Ex. 4]{Mac}). 
\end{example}

\begin{remark}[Sizes of classes $\{\la\}$]
Let $\la\in\Y_n$. One can show that the number of nilpotent matrices in the conjugacy class $\{\la\}\subset\gl(n,q)$ is given by 
\begin{equation}
|\{\la\}|=(t;t)_n\, t^{2n(\la)-n(n-1)}(b_\la(t))^{-1}, \quad t=q^{-1}.
\end{equation}
\end{remark}

\begin{remark}[The mass of $\La_n(q)$]
Set $t=q^{-1}$, fix $\om\in\Om(t)$, and let $P_\om$ be the corresponding normalized ergodic measure, as in \eqref{eq4.I}. The following formula holds 
\begin{equation}
P_\om(\La_n(q))=\frac{(t;t)_n h_n(\om)}{(1-t)^n t^{n(n-1)/2}}.
\end{equation} 
Here $h_n(\om)$, the $\om$-specialization of $h_n\in\Sym$, is found from the generating series 
$$
H(\om)(z):=1+\sum_{n=1}^\infty h_n(\om)z^n=e^{\wt\ga z}\prod_{i=1}^\infty \frac{(-\be_i z;t)_\infty}{1-\al_i z}.
$$
The proof is based on the fundamental Cauchy identity for the HL functions.
\end{remark}

\begin{remark}[Law of large numbers]
Each normalized ergodic measure $P$ gives rise to a sequence $\{\la(n)\}$ of random Young diagrams of growing size $n$. Bufetov and Petrov \cite{BP} proved a law of large numbers for the row and column lengths of $\la(n)$. In the case $P=P^\Haar$, this was done earlier by Borodin \cite{B}, together with a central limit theorem.  
\end{remark}

\section{Description of $\PGL$}\label{sect5}

In this section we establish an affine isomorphism between the cone $\PGL$ and a direct product of countably many copies of the cone $\PGL_0$ (Propositions \ref{prop5.A} and \ref{prop5.B}). Because the structure of $\PGL_0$ was described in Section \ref{sect4}, we obtain in this way a complete description of invariant Radon measures on $\La(q)$.

\subsection{A partition of $\La(q)$}

As explained in Subsection \ref{sect2.4}, the type of each matrix $X\in\gl(n,q)$ (i.e. the full invariant of its conjugacy class) is represented  by a pair $\tau=(\si,\la)\in\T_n$, where $\si\in\Si_m$ is a nonsingular type and $\la\in\Y_{n-m}$ is a Young diagram, for some $m\le n$. 

We are going to partition $\La(q)$ into countably many invariant subsets $\La^\si(q)$, indexed by elements of the set 
$$
\Si:=\bigsqcup_{m=0}^\infty\Si_m.
$$
It is convenient to set $|\si|:=m$ if $\si\in\Si_m$, and call this number the \emph{size} of $\si$.  

Recall that for an infinite matrix $M\in\La(q)$, we denote its upper-left $n\times n$ corner by $M^{\{n\}}$. 

\begin{lemma}\label{lemma5.A}
Fix $M\in\La(q)$. For each $n$, let $(\si_n,\la^{(n)})\in\T_n$ stand for the type of the corner $M^{\{n\}}\in\gl(n, q)$. Let $n_0$ be the minimal number such that $M\in\La_{n_0}(q)$. We have
$$
\si_n=\si_{n+1}, \quad \la^{(n)}\nearrow \la^{(n+1)}, \qquad n\ge n_0.
$$
\end{lemma}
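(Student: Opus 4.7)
The starting point is the observation that the hypothesis $M\in\La_{n_0}(q)$ forces, for every $n\ge n_0$, the corner $M^{\{n+1\}}$ to have the block form
\begin{equation*}
M^{\{n+1\}}=\begin{bmatrix} M^{\{n\}} & x \\ 0 & 0 \end{bmatrix},\qquad x\in\F^n,
\end{equation*}
because the entire $(n+1)$-st row of $M$ vanishes at positions $j\le n+1$ by definition of $\La_n(q)$. Writing $X:=M^{\{n\}}$ and $Y:=M^{\{n+1\}}$, viewed as linear operators on $V:=\F^n$ and $V':=V\oplus \F e_{n+1}$, I read off two basic structural facts: $V$ is $Y$-invariant with $Y|_V=X$, and $Ye_{n+1}=(x,0)^{T}\in V$. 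The task thus reduces to comparing the Fitting decompositions of $X$ on $V$ and $Y$ on $V'$.

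My first step will be to show that the nonsingular parts match, which yields $\si_n=\si_{n+1}$. From the block form one immediately gets $\chi_Y(t)=t\cdot\chi_X(t)$, so the nonzero spectra (with multiplicities) of $X$ and $Y$ coincide, giving $\dim V'^Y_0=\dim V^X_0$. On the other hand, $V^X_0=\bigcap_k X^kV$ is contained in $\bigcap_k Y^kV'=V'^Y_0$ since $Y|_V=X$. Equality of dimensions then forces $V'^Y_0=V^X_0$, and the restriction of $Y$ to this subspace literally equals $X|_{V^X_0}$. Hence the nonsingular type is unchanged, which proves the first half of the lemma.

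For the second half, I will argue that the nilpotent part $Y|_{V'^Y_1}$ extends the nilpotent part $X|_{V^X_1}$ by a single $Y$-invariant dimension. Indeed $V^X_1\subseteq V'^Y_1$ (elements killed by a power of $X$ are killed by the same power of $Y$), and $\dim V'^Y_1=\dim V^X_1+1$ by the previous paragraph. So I may replace the general setup by the following nilpotent problem: given a nilpotent $N$ on $W$ of Jordan type $\mu$ and a nilpotent extension $N'$ on an invariant codimension-$1$ superspace $W'\supset W$, I must show that the Jordan type $\la$ of $N'$ satisfies $\mu\nearrow\la$.

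The remaining ingredient is a standard kernel-dimension argument, which I will make the final step. The conjugate partition is recovered by $\mu'_k=\dim\ker N^k-\dim\ker N^{k-1}$ and similarly for $\la'$. Setting $a_k:=\dim\ker (N')^k-\dim\ker N^k$, one has $a_0=0$, the sequence $(a_k)$ is nondecreasing, $a_k\in\{0,1\}$ (because $\ker(N')^k\cap W=\ker N^k$ and the codimension is $1$), and $a_k=1$ for $k$ sufficiently large. Therefore there is a unique $k^{*}$ with $\la'_{k^{*}}=\mu'_{k^{*}}+1$ and $\la'_k=\mu'_k$ otherwise, meaning $\la$ is obtained from $\mu$ by adding a single box in column $k^{*}$. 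This is exactly $\mu\nearrow\la$ and completes the proof. The only mildly delicate point I anticipate is the cleanest way to verify the inclusion $V^X_0\subseteq V'^Y_0$ together with the dimension count; once that is in place, the rest is bookkeeping.
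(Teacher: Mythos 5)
Your proof is correct, but it takes a genuinely different route from the paper's. The paper argues by matrix conjugation: it first puts $X=M^{\{n\}}$ into block-diagonal form $S\oplus N$ with $S$ nonsingular of type $\si_n$ and $N$ nilpotent of type $\la^{(n)}$, then kills the coupling vector $x_1$ in the augmented corner by a unitriangular conjugation (solving $Sz=x_1$, possible since $S$ is invertible), and finally quotes part (i) of Proposition \ref{prop_borodin} for the remaining nilpotent block $\bigl[\begin{smallmatrix} N & x_2\\ 0 & 0\end{smallmatrix}\bigr]$. You instead work invariantly: the Fitting decomposition identifies the nonsingular parts of $X$ and $Y$ (via $\chi_Y=t\chi_X$ together with the containment $\bigcap_k X^kV\subseteq\bigcap_k Y^kV'$ and a dimension count), and for the nilpotent parts you prove from scratch that restricting a nilpotent operator to an invariant hyperplane removes exactly one box, via the kernel-dimension sequence $a_k$. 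Your version is self-contained (it does not invoke Proposition \ref{prop_borodin}, whose proof only appears in Section \ref{sect9}) and proves a slightly more general statement than the column-augmentation setting, in a spirit close to the rank bookkeeping the paper itself performs in Section \ref{sect9}. What the paper's conjugation argument buys in exchange is quantitative information: the same step of eliminating $x_1$ is reused in Subsection \ref{sect5.2} to derive the counting identity \eqref{eq5.A}, which your invariant argument does not directly yield. One point to tighten: you assert without justification that $(a_k)$ is nondecreasing, and together with $a_k\in\{0,1\}$ this is exactly where the ``single box added'' conclusion lives. It is true and needs only a line: if $\rk\,(N')^k=\rk\,N^k$ for some $k$, then $(N')^kW'=N^kW$ and applying $N'$ gives $(N')^{k+1}W'=N^{k+1}W$, so the equality persists; equivalently, the inclusion $W\subset W'$ induces injections $\ker N^k/\ker N^{k-1}\hookrightarrow\ker (N')^k/\ker (N')^{k-1}$, whence $\la'_k\ge\mu'_k$ for all $k$.
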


\begin{proof}
Suppose $n\ge n_0$ and let $X := M^{\{n\}}\in\gl(n, q)$. Then the corner $M^{\{n+1\}}$ is of the form
$$
Y = \begin{bmatrix} X & x \\ 0 & 0 \end{bmatrix},\quad x\in\F^n.
$$
Conjugating $X$ by an appropriate matrix from the subgroup $GL(n,q)\subset GL(n+1,q)$, we may assume that $X$ has the block form $\begin{bmatrix} S & 0\\ 0 & N\end{bmatrix}$, where $S$ is nonsingular (of type $\si_n$) and $N$ is nilpotent (of Jordan type $\la^{(n)}$). Thus, $Y$ can be represented by the $3\times 3$ block matrix
$$
Y=\begin{bmatrix}
S & 0 & x_1\\ 0 & N & x_2 \\ 0 & 0 & 0
\end{bmatrix} 
$$
Next,  we can kill $x_1$ by conjugating $Y$ with an appropriate unitriangular matrix. Indeed,
$$
\begin{bmatrix}
1 & 0 & z\\ 0 & 1 & 0 \\ 0 & 0 & 1
\end{bmatrix} 
\begin{bmatrix}
S & 0 & x_1\\ 0 & N & x_2 \\ 0 & 0 & 0
\end{bmatrix} 
\begin{bmatrix}
1 & 0 & -z\\ 0 & 1 & 0 \\ 0 & 0 & 1
\end{bmatrix} 
=
\begin{bmatrix}
S & 0 & x_1-Sz\\ 0 & N & x_2 \\ 0 & 0 & 0
\end{bmatrix},
$$
and since $S$ is nonsingular, there exists $z$ such that $Sz=x_1$. 

Now the desired result follows from the first claim in Proposition \ref{prop_borodin}.
\end{proof}

The lemma shows that for any $M\in\La(q)$, the nonsingular type of the corner $M^{\{n\}}$ stabilizes; let us call it the \emph{stable nonsingular type of $M$} and denote it by $\si(M)$.  The correspondence $M\mapsto \si(M)$ gives rise to a map $\La(q)\to\Si$. We denote its fibres by $\La^\si(q)$:
$$
\La^\si(q):=\{M\in\La(q): \si(M)=\si\}, \qquad \si\in\Si.
$$
Thus, we obtain a partition
$$
\La(q)=\bigsqcup_{\si\in\Si}\La^\si(q).
$$

Note that the set $\Nil(\La(q))$ of pronilpotent matrices is one of the parts --- it corresponds to the only element of $\Si_0$, the empty nonsingular type.  

\begin{lemma}\label{lemma5.B}
Each $\La^\si(q)$, $\si\in\Si$, is a nonempty, $GL(\infty,q)$-invariant, clopen subset.
\end{lemma}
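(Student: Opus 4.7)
For non-emptiness, the plan is to exhibit an explicit element. Fix $\si\in\Si$ with $|\si|=m$ and choose any matrix $S\in\gl(m,q)$ of type $\si$. Define $M\in\La(q)$ by placing $S$ in the upper-left $m\times m$ corner and zeros elsewhere; then $M\in\La_m(q)$, and for every $n\ge m$ the corner $M^{\{n\}}$ is block-diagonal with blocks $S$ and $0_{n-m}$, so its type has nonsingular part $\si$ (and nilpotent part $(1^{n-m})$). Hence $\si(M)=\si$.

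For $GL(\infty,q)$-invariance, recall that any $g\in GL(\infty,q)$ lies in the image of some $GL(k,q)$, so $g$ agrees with the identity outside a top-left $k\times k$ block. For any $n\ge k$ a direct block computation gives $(gMg^{-1})^{\{n\}}=g^{\{n\}}M^{\{n\}}(g^{\{n\}})^{-1}$, which is $GL(n,q)$-conjugate to $M^{\{n\}}$. Thus the type of $(gMg^{-1})^{\{n\}}$ coincides with that of $M^{\{n\}}$ for all sufficiently large $n$, and in particular their stable nonsingular types agree.

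The main step is clopenness. By Lemma \ref{lemma5.A}, whenever $M\in\La_n(q)$ the stable nonsingular type $\si(M)$ is simply the nonsingular part of the type of $M^{\{n\}}$. Consequently
$$
\La^\si(q)\cap\La_n(q)=\bigsqcup_{X}\Cyl_n(X),
$$
where $X$ runs over the (finite) set of matrices in $\gl(n,q)$ whose type has nonsingular part $\si$ --- this set is empty for $n<|\si|$ and finite for $n\ge|\si|$. Since each elementary cylinder $\Cyl_n(X)$ is clopen in $\La(q)$, the intersection $\La^\si(q)\cap\La_n(q)$ is clopen in $\La_n(q)$. Openness of $\La^\si(q)$ follows by writing it as the union $\bigcup_n(\La^\si(q)\cap\La_n(q))$ of open subsets of $\La(q)$. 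For closedness, invoke the defining property of the inductive-limit topology on $\La(q)=\varinjlim\La_n(q)$: a subset is closed iff its intersection with each $\La_n(q)$ is closed, which was just verified.

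No genuine obstacle is anticipated: the only substantive input is the stabilization property already established in Lemma \ref{lemma5.A}, which ensures that membership in $\La^\si(q)$ depends only on a finite corner of $M$; everything else is topological bookkeeping.
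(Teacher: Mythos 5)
Your proof is correct and follows essentially the same route as the paper: via Lemma \ref{lemma5.A}, the intersection $\La^\si(q)\cap\La_n(q)$ is a finite (possibly empty) union of elementary cylinder sets, which yields nonemptiness, invariance, and openness. The only cosmetic difference is that the paper deduces closedness from openness of the complement (the union of the other $\La^{\si'}(q)$), while you test closedness on each $\La_n(q)$ using the inductive-limit topology; both arguments are valid.
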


\begin{proof}
Fix an arbitrary $\si\in\Si$. From Lemma \ref{lemma5.A} it is seen that for any $n\ge |\si|$, the intersection $\La^\si(q)$ with $\La_n(q)$ is a nonempty cylinder set. It follows that $\La^\si(q)$, $\si\in\Si$, is nonempty, $GL(\infty,q)$-invariant, and open. Finally, it is also closed, because its complement is open.  
\end{proof}

\subsection{The structure of $\PGL$}\label{sect5.2}

\begin{definition}
For $\si\in\Si$, let $\PGL_\si \subset\PGL$ be the subset of measures which are supported on the clopen subset $\La^\si(q)$.
\end{definition}

Given $P\in\PGL$ and $\si\in\Si$, we denote by $P_\si$ the restriction of $P$ to $\La^\si(q)$. It is an element of $\M_\si^{GL(\infty, q)}$ and we can write
 $$
P = \sum_{\si\in\Si}P_\si, \qquad P_\si\in\PGL_\si.
$$

Observe that $\M_\si^{GL(\infty, q)}$ is a convex cone, for any $\si\in\Si$.

Finally, observe that if $\si_0$ is the only element of $\Si_0$, then $\PGL_{\si_0}$ coincides with the convex cone $\PGL_0$ of measures supported on the set of pronilpotent matrices, that was examined in Section \ref{sect4}.

\begin{proposition}\label{prop5.A}
The above decomposition determines an affine-isomorphism $\PGL\cong \prod_{\si\in\Si}{\PGL_\si}$.
\end{proposition}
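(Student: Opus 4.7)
The plan is to construct mutually inverse affine maps
\[
\PGL \;\longleftrightarrow\; \prod_{\si\in\Si}\PGL_\si
\]
realizing the decomposition $P=\sum_\si P_\si$. In the forward direction, given $P\in\PGL$, I would set $P_\si(B):=P(B\cap\La^\si(q))$ for every Borel $B\subset\La(q)$. Since $\La^\si(q)$ is $GL(\infty,q)$-invariant and clopen (Lemma \ref{lemma5.B}), $P_\si$ is itself $GL(\infty,q)$-invariant, supported on $\La^\si(q)$, and Radon (as $P_\si(K)\le P(K)<\infty$ for any compact $K$), so $P_\si\in\PGL_\si$.

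In the reverse direction, given $(P_\si)_{\si\in\Si}\in\prod_\si\PGL_\si$, I would define $P:=\sum_{\si\in\Si}P_\si$ as a set function on Borel sets. Countable additivity follows from a routine swap of nonnegative sums, and $GL(\infty,q)$-invariance is inherited term by term. The main step, and the only non-trivial one, is verifying the Radon condition. Here I would use the fact (recalled in the proof of Lemma \ref{lemma3.A}) that any compact $K\subset\La(q)$ is contained in some $\La_n(q)$. The sets $\{\La^\si(q)\cap\La_n(q):\si\in\Si\}$ form a disjoint clopen partition of the compact space $\La_n(q)$; by compactness, only finitely many of them are nonempty, say for $\si\in F_n\subset\Si$. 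Consequently
\[
P(K)\le P(\La_n(q)) = \sum_{\si\in F_n} P_\si(\La_n(q)) < \infty,
\]
since each $P_\si$ is Radon and $F_n$ is finite.

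Finally, the two maps are mutually inverse: $\big(\sum_\si P_\si\big)\big|_{\La^{\si'}(q)}=P_{\si'}$ by the disjointness of the partition, and $\sum_\si P|_{\La^\si(q)}=P$ because $\La(q)=\bigsqcup_\si\La^\si(q)$. Both assignments clearly respect addition and multiplication by nonnegative scalars, giving the claimed affine isomorphism. The only genuine obstacle is the Radon condition addressed above; the rest is a direct unwinding of definitions together with Lemma \ref{lemma5.B}.
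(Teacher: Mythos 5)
Your proof is correct and follows essentially the same route as the paper: the only substantive point is the Radon property of $\sum_{\si}P_\si$, handled via the fact that every compact subset of $\La(q)$ lies in some $\La_n(q)$, with the forward map and the inverse being routine restrictions along the clopen invariant partition. The one minor difference is that you get finiteness of the relevant index set by a compactness argument with the disjoint clopen partition of $\La_n(q)$, while the paper observes directly that $\La_n(q)$ meets only those $\La^\si(q)$ with $|\si|\le n$, of which there are finitely many; both justifications are valid.
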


\begin{proof}
We only need to check that for any choice of measures $P_\si\in\PGL_\si$, $\si\in\Si$, their sum is a Radon measure. To see this recall that any compact subset of $\La(q)$ is contained in $\La_n(q)$ for $n$ large enough. On the other hand, $\La_n(q)$ intersects only those sets $\La^\si(q)$ for which $|\si|\le n$, and there are only finitely many such $\si$'s. This completes the proof.
\end{proof}

\begin{proposition}\label{prop5.B}
For any $\si\in\Si$, there is an affine-isomorphism $\PGL_0 \stackrel{\cong}{\to}\PGL_\si$ of convex cones.
\end{proposition}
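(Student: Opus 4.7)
The plan is to mimic the strategy of Lemma \ref{lemma4.B}: I will parametrize $\PGL_\si$ by nonnegative functions on $\Y$ satisfying a suitable recursion, and then identify that function space with $\Harm_+(\Ga^{\glinfty})$ via a positive rescaling. First, set $m:=|\si|$, fix once and for all a matrix $X_\si\in\gl(m,q)$ of type $\si$, and for each $\la\in\Y$ choose a Jordan nilpotent $N_\la\in\Nil(\gl(|\la|,q))$ of type $\la$, so that the block-diagonal matrix $X_\si\oplus N_\la\in\gl(m+|\la|,q)$ has type $(\si,\la)$ (Lemma \ref{lemma2.A}). Given $P\in\PGL_\si$, invariance under $GL(n,q)$ with $n=m+|\la|$ forces $P(\Cyl_n(X))$ to depend only on the type of $X$, and the defining condition $P(\La(q)\setminus\La^\si(q))=0$ combined with Lemma \ref{lemma5.A} makes $P(\Cyl_n(X))$ vanish unless $X$ has type $(\si,\la)$ for some $\la$. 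Thus $P$ will be encoded by the function $p:\Y\to\R_{\ge0}$ defined by $p(\la):=P(\Cyl_{m+|\la|}(X_\si\oplus N_\la))$.

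Next, I will decompose $\Cyl_{m+k}(X_\si\oplus N_\la)$ (with $k:=|\la|$) into the elementary cylinders of level $m+k+1$ indexed by the augmentations
$$Y=\begin{bmatrix}X_\si\oplus N_\la & x\\ 0 & 0\end{bmatrix},\quad x=(x_1,x_2)\in\F^m\oplus\F^k.$$
The same unitriangular conjugation used in the proof of Lemma \ref{lemma5.A} (choose $z:=X_\si^{-1}x_1$) shows that, thanks to the invertibility of $X_\si$, $Y$ is conjugate to the augmentation with $x_1=0$. Consequently the type of $Y$ is $(\si,\la')$, where $\la'\in\Y_{k+1}$ is the Jordan type of the nilpotent augmentation of $N_\la$ by $x_2$, and the $q^m$ choices of $x_1$ all yield matrices of the same type $(\si,\la')$. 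By the definition of the weights $\Li^{k+1}_k(\la',\la)$ in Subsection \ref{harmGL}, the number of such augmentations of type $(\si,\la')$ equals $q^m\Li^{k+1}_k(\la',\la)$, which yields the recursion
$$p(\la)=\sum_{\la'\in\Y:\,\la\nearrow\la'}q^m\,\Li^{|\la|+1}_{|\la|}(\la',\la)\,p(\la'),\qquad \la\in\Y.$$
Conversely, any nonnegative function $p$ on $\Y$ satisfying this recursion determines a finitely-additive, $GL(\infty,q)$-invariant set function on the cylinders contained in $\La^\si(q)$, which extends uniquely to an element of $\PGL_\si$ via Lemma \ref{lemma3.A}; this gives an affine-isomorphism between $\PGL_\si$ and the cone of such functions.

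Finally, I will observe that the branching graph with the same vertices and edges as $\Ga^{\glinfty}$ but with weights $q^m\,\Li^{k+1}_k(\la',\la)$ is similar to $\Ga^{\glinfty}$ in the sense of Definition \ref{def4.sim}, via the gauge $f(\la):=q^{-m|\la|}$ (the required identity $q^m=f(\la)/f(\la')$ for $|\la'|=|\la|+1$ being immediate). By Lemma \ref{homol_graphs}, the rescaling $p\mapsto\vp^{\glinfty}:=q^{m|\cdot|}p$ is an affine-isomorphism from the above $\si$-function cone onto $\Harm_+(\Ga^{\glinfty})$, which is in turn affine-isomorphic to $\PGL_0$ by Lemma \ref{lemma4.B}. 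Composing these three isomorphisms will give the desired $\PGL_0\stackrel{\cong}{\to}\PGL_\si$. The only genuinely new input beyond Section \ref{sect4} is the augmentation count $q^m\Li^{k+1}_k(\la',\la)$, which I expect to be the main (and only) technical point; it rests squarely on the invertibility of the nonsingular block $X_\si$, while everything else is bookkeeping.
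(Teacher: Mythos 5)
Your proposal is correct and follows essentially the same route as the paper: it introduces the augmented-count weights $q^{|\si|}\Li^{n+1}_n(\la,\mu)$ (the paper's $L^{n+1}_n(\la,\mu\mid\si)$, obtained by the same unitriangular conjugation from the proof of Lemma \ref{lemma5.A}), identifies $\PGL_\si$ with the nonnegative harmonic functions on the resulting graph by the argument of Lemma \ref{lemma4.B}, and transfers to $\Ga^{\glinfty}\cong\PGL_0$ via similarity of branching graphs (Lemma \ref{homol_graphs}). The only difference is cosmetic: you make the gauge function $f(\la)=q^{-|\si|\,|\la|}$ explicit, which the paper leaves implicit.
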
 
\begin{proof}
Fix $\si\in\Si$ and let $s:=|\si|$. We may assume that $s>0$, because if $s=0$, then $\si$ is the unique element of $\Si_0$ and $\PGL_\si = \PGL_0$. Let $\mu\in\Y_n$ and $\la\in\Y_{n+1}$. Pick a matrix 
$$
X=\begin{bmatrix} S & 0 \\ 0 & N
\end{bmatrix} \in\gl(s+n,q),
$$
where $S\in\gl(s,q)$ is a nonsingular matrix of type $\si$ and $N\in\gl(n,q)$ is a nilpotent matrix of Jordan type $\mu\in\Y_n$. 

Denote by $L^{n+1}_n(\la,\mu\mid\si)$ the number of column vectors $x\in\F^{s+n}$ such that the matrix
$$
Y:=\begin{bmatrix}
X & x \\ 0 & 0
\end{bmatrix}
=\begin{bmatrix}
S & 0 & x_1 \\ 0 & N & x_2 \\ 0 & 0 & 0
\end{bmatrix}  
\in\gl(s+n+1,q)
$$
has type $(\si,\la)$. This number does not depend on the choice of $X$. 

From the proof of Lemma \ref{lemma5.A} we see that 
\begin{equation}\label{eq5.A}
L^{n+1}_n(\la,\mu\mid\si)=q^s L^{n+1}_n(\la,\mu),
\end{equation}
where the quantity $L^{n+1}_n(\la,\mu)$ is defined in Subsection \ref{harmGL}. Recall that $L^{n+1}_n(\la,\mu)$ is nonzero precisely when $\mu\nearrow \la$. 

Let $\Ga^{GL(\infty,q)}_\si$ denote the branching graph with the same vertices and edges as in the Young graph, and with the  edge weights $L^{n+1}_n(\la,\mu\mid\si)$. The same argument as in Lemma \ref{lemma4.B} yields an isomorphism between the cone $\PGL_\si$ and the cone of nonnegative harmonic functions on the graph $\Ga^{GL(\infty,q)}_\si$. 

On the other hand, from the relation \eqref{eq5.A}, it is seen that the graph $\Ga^{GL(\infty,q)}_\si$ is similar (in the sense of Definition \ref{def4.sim}) to the graph $\Ga^{GL(\infty,q)}$. We know that nonnegative harmonic functions on $\Ga^{GL(\infty,q)}$ correspond to measures $P\in\PGL_0$ (Lemma \ref{lemma4.B}). This yields the desired isomorphism.  
\end{proof}

\section{Conjugacy classes in skew-Hermitian matrices}\label{U_def}

\subsection{The unitary group over a finite field}

Denote $\FF := \F_{q^2}\supset\F$, and let $F : \FF\to\FF$ be the \emph{Frobenius map} defined by $F(x) := x^{q}$. It is the non-trivial involutive automorphism of $\FF$ that fixes $\F$. We denote $\overline{x} := F(x)$. In what follows we will assume that $q$ is not a power of $2$ (the case of characteristic $2$ requires only minor modifications, but to simplify the exposition we exclude it).
Fix $\eps\in\FF\setminus\F$ such that $\overline{\eps} = -\eps$.
The particular choice of $\eps$ is unimportant, as we only will use that $\overline{a + b\eps} = a - b\eps$, for any $a, b\in\F$.
For finite fields, the Frobenius map $x \mapsto \overline{x}$ plays the role that conjugation plays for $\C$, and $\eps$ plays the role of the imaginary unit $i$.
In the same spirit, the \emph{conjugate transpose} $A^*$ of a rectangular matrix $A$ with entries in $\FF$ is obtained by transposing $A$ and applying the Frobenius map to each entry.

Let $E$ be a finite-dimensional space over $\FF$; by a \emph{sesquilinear form} on $E$ we mean a map $\tau : E\times E\to\FF$ which is additive in each variable and satisfies
$$
\tau(ax, y) = a\tau(x, y),\quad \tau(x, ay) = \overline{a}\tau(x, y),\qquad x, y\in E,\quad a\in\FF.
$$
The sesquilinear form $\tau$ is \emph{Hermitian} if
$$
\tau(x, y) = \overline{\tau(y, x)},\qquad x, y\in E.
$$
All nondegenerate sesquilinear Hermitian forms on a fixed vector space are equivalent, see e.g. \cite[Theorem 4.1]{BC0} or \cite[Sect. 4]{Lewis}.
A linear operator $A: E\to E$ is said to be \emph{$\tau$-skew-Hermitian} if 
\begin{equation}\label{tau_sH}
\tau(Au, v) + \tau(u, Av) = 0, \qquad u, v\in E,
\end{equation}
whereas $A$ is said to be \emph{$\tau$-Hermitian} if
\begin{equation}\label{tau_H}
\tau(Au, v) = \tau(u, Av), \qquad u, v\in E.
\end{equation}

To denote the dimension of vector spaces and the size of matrices, we will use the symbol $N$, because later we will need to distinguish between the cases of even $N=2n$ and odd $N=2n+1$ dimensions.

If $E = \FF^N$ is the vector space of $N$-column vectors, then a nondegenerate sesquilinear form $\tau$ on $E$ corresponds to a nonsingular $N\times N$ matrix $T$ via the relation:
$$
\tau(x, y) = y^*Tx.
$$
The form $\tau$ is Hermitian iff $T^* = T$.

On the coordinate space $\FF^N$, let us fix the nondegenerate sesquilinear Hermitian form $\tau : \FF^N\times\FF^N \to \FF$ corresponding to the matrix $W_N$ with $1$'s in the secondary diagonal and $0$'s elsewhere:
\begin{equation}\label{matrixW}
W_N := \begin{bmatrix}  & &  &  & 1\\ & & & 1 & \\ & &\ddots& & \\ & 1 & & & \\ 1 & & & & \end{bmatrix}.
\end{equation}
In other words, if $e_1, \dots, e_N$ are the vectors from the canonical basis of $\FF^N$, then the form $\tau$ is given by
\begin{equation}\label{std_form}
\tau(e_r, e_s) = \delta_{r, N+1-s}, \qquad r,s = 1, \dots, N.
\end{equation}
Given a matrix $A\in\Mat_N(\FF)$, we say that $A$ is \emph{$\tau$-skew-Hermitian}, or \emph{$\tau$-Hermitian}, if the same is true of the corresponding linear operator $A : \FF^N\to\FF^N$ (see \eqref{tau_sH} and \eqref{tau_H}).
More directly, if we define the \emph{$\tau$-conjugate} of $A$ by $A^{\sharp} := W_NA^*W_N$, then $A$ is $\tau$-skew-Hermitian if $A^{\sharp} = -A$, and $A$ is $\tau$-Hermitian if $A^{\sharp} = A$.
For simplicity, we call a matrix $A\in\Mat_N(\FF)$ simply skew-Hermitian if $A^{\sharp} = -A$, or Hermitian if $A^{\sharp} = A$.

The \emph{unitary group} $U(N, q^2)$ is the subgroup of matrices in $GL(N, q^2) = GL(N, \FF)$ that preserve $\tau$:
\begin{equation*}
U(N, q^2) := \{ g\in GL(N, q^2) \mid \tau(gu, gv) = \tau(u, v), \text{ for all } u, v\in\FF^N \}.
\end{equation*}

The Lie algebra of $U(N, q^2)$, to be denoted $\uu(N, q^2)$, is the Lie subalgebra of $\gl(N, q^2) = \gl(N, \FF)$ consisting of the skew-Hermitian matrices:
\begin{equation*}
\uu(N, q^2) := \{ A\in\gl(N, q^2) \mid \tau(Au, v) + \tau(u, Av) = 0, \text{ for all } u, v\in\FF^N \}.
\end{equation*}
The group $U(N, q^2)$ acts on $\uu(N, q^2)$ by matrix conjugation. As in the case of $GL(N,q)$, this adjoint action may be identified with the coadjoint action.

Denote the set of Hermitian matrices by
$$\Herm(N, q^2) := \{ A\in\gl(N, q^2) \mid \tau(Au, v) = \tau(u, Av), \text{ for all }u, v\in\FF^N \}.$$
It is also a $U(N, q^2)$-module, with action given by conjugation. Since $\Herm(N, q^2) = \eps\cdot\uu(N, q^2)$, the $U(N, q^2)$-modules $\Herm(N, q^2)$ and $\uu(N, q^2)$ are equivalent. We can switch from one space to the other if necessary. 

\subsection{Nilpotent conjugacy classes in $\uu(N, q^2)$}\label{unit_orbs}

By a conjugacy class in $\uu(n,q^2)$ we mean a $U(N,q^2)$-orbit. Let $\Nil(\uu(N, q^2))\subseteq\uu(N, q^2)$ denote the subset of nilpotent matrices. It is $U(N,q^2)$-invariant, so we may speak of nilpotent conjugacy classes in $\uu(N,q^2)$ just as we did for $\gl(n,q)$. We need the following result.

\begin{proposition}\label{prop6.A}
Nilpotent conjugacy classes in $\uu(N,q^2)$ are parametrized by partitions $\la\in\Y_N$ ---  exactly as in the case of\/ $\gl(N,q^2)$.
\end{proposition}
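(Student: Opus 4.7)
The proposition claims a bijection between nilpotent $U(N,q^2)$-orbits in $\uu(N,q^2)$ and partitions of $N$, via Jordan type. My plan is to show this bijection by three standard steps: well-definedness, surjectivity, and injectivity.

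First, the map from $U$-orbits to $\Y_N$ is obviously well-defined: conjugation preserves the Jordan type as an element of $\gl(N,\FF)$, and $U(N,q^2) \subset GL(N,q^2)$, so orbits of the smaller group are contained in orbits of the larger group, on which the Jordan type is constant.

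For surjectivity, I would explicitly construct, for each $\la = (\la_1, \la_2, \dots) \in \Y_N$, a nilpotent skew-Hermitian matrix of Jordan type $\la$. For a single block of size $m$, use the $m$-dimensional subspace with the standard form given by $W_m$ (as in \eqref{matrixW}) and take the operator $X_m$ whose matrix in the standard basis is $\eps$ times an appropriate shift; more precisely, arrange the entries so that $X_m^\sharp = -X_m$ while $X_m$ is a single Jordan block. A short computation shows this is possible: one writes $X_m$ as a superdiagonal of alternating signs/$\eps$-factors chosen so that the $\sharp$-operation negates it. Then for a general $\la$, decompose $\FF^N$ as an orthogonal direct sum of subspaces of dimensions $\la_1, \la_2, \dots$, each equipped with the secondary-diagonal form, and take the direct sum of the block constructions $X_{\la_i}$. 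This yields a nilpotent skew-Hermitian matrix of Jordan type $\la$.

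For injectivity --- that two nilpotent elements of $\uu(N,q^2)$ with the same Jordan type are $U(N,q^2)$-conjugate --- my plan is to prove the structural statement: \emph{any triple $(E, \tau, X)$ consisting of a finite-dimensional $\FF$-vector space $E$, a nondegenerate sesquilinear Hermitian form $\tau$, and a nilpotent $\tau$-skew-Hermitian operator $X$ decomposes as an orthogonal direct sum of $\FF[X]$-cyclic subspaces, and each cyclic summand is determined up to isometry by its dimension alone.} The decomposition is proved by induction on $\dim E$: pick a vector $v$ with maximal $X$-height, i.e., such that the cyclic submodule $V := \FF[X] v$ has dimension equal to the largest part of the Jordan type; show that $v$ can be chosen so that $\tau|_V$ is nondegenerate; then the orthogonal complement $V^\perp$ is $X$-stable (since $X$ is $\tau$-skew-Hermitian) and the induction applies to $(V^\perp, \tau|_{V^\perp}, X|_{V^\perp})$. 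The uniqueness of the form on a cyclic summand of dimension $m$ reduces, via the identity $\tau(X^a v, X^b v) = (-1)^a \tau(v, X^{a+b} v)$, to classifying the single scalar $c := \tau(v, X^{m-1} v)$ up to the action of units in $\FF[X]/X^m$ combined with the $\tau$-skew-Hermitian scaling of $v$; the outcome is that $c$ lies in a single orbit, so all cyclic summands of a given size are isometric.

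The main obstacle is the existence step in the orthogonal decomposition --- namely showing one can choose the cyclic vector $v$ of maximal height so that $\tau|_{\FF[X] v}$ is nondegenerate. The argument is that the radical of $\tau|_{\FF[X]v}$ is an $X$-invariant subspace of a cyclic module, hence equals $X^j \FF[X] v$ for some $j \ge 0$, and a genericity/counting argument over the finite field $\FF$ (or a direct perturbation of $v$ within its coset modulo elements of smaller height) produces a $v$ for which $j = m$, i.e., the form is nondegenerate. An alternative, more conceptual route is to appeal to the Lang--Steinberg theorem: view $U(N,q^2)$ as the $F$-fixed points of $GL(N,\overline{\F_q})$ for a suitable Frobenius $F$; then $U(N,q^2)$-orbits inside a single $GL(N,\overline{\F_q})$-orbit of a nilpotent element $X \in \uu(N,q^2)$ are parametrized by $H^1(F, Z_{GL}(X))$, and since the centralizer of a nilpotent element in $GL$ is connected, Lang's theorem forces this $H^1$ to vanish. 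Either approach yields the desired bijection.
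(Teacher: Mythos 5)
Your existence step is essentially the paper's: the paper writes down the same block-by-block model (in the Hermitian picture, with the antidiagonal pairing along each row of $\la$) and then invokes the equivalence of all nondegenerate sesquilinear Hermitian forms of a given dimension, which is exactly what you use implicitly when you replace the orthogonal sum of the forms $W_{\la_i}$ by $W_N$. For the conjugacy step the paper takes only your ``alternative'' route: $GL(N,q^2)$-conjugate elements of $\uu(N,q^2)$ are $U(N,q^2)$-conjugate by Lang--Steinberg, the point being precisely the one you name --- the centralizer in $GL_N$ of any matrix is the unit group of its linear centralizer algebra, hence connected, so the relevant $H^1$ vanishes. Your primary route, the Wall-type orthogonal decomposition into cyclic summands, is genuinely different and buys a self-contained linear-algebra proof with no algebraic-group input, but its two pivotal steps are only gestured at and deserve to be filled in. (i) The existence of a cyclic vector $v$ of maximal height $m=\la_1$ with $\tau|_{\FF[X]v}$ nondegenerate should not rest on a vague ``genericity/counting'' appeal: the clean argument is that $\be(u,w):=\tau(u,X^{m-1}w)$ has radical exactly $\ker X^{m-1}$ (using $X^{\sharp}=-X$, so $(\operatorname{Ran}X^{m-1})^{\perp}=\ker X^{m-1}$), hence induces a nondegenerate Hermitian or $\eps$-Hermitian form on $V/\ker X^{m-1}$; over a finite field such a form has an anisotropic vector, and any lift $v$ of one works, since the Gram matrix of $v,Xv,\dots,X^{m-1}v$ is triangular with respect to the antidiagonal with entries $\pm\tau(v,X^{m-1}v)\neq 0$ there. (ii) The ``single orbit'' claim for $c=\tau(v,X^{m-1}v)$ is exactly where finiteness of the field enters: the constraints force $c=(-1)^{m-1}\overline{c}$, rescaling $v\mapsto av$ multiplies $c$ by $a\overline{a}$, and you need surjectivity of the norm map $\FF^{\times}\to\F^{\times}$; over $\R$ or a $p$-adic field this step fails and extra invariants appear, so it must be made explicit. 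With (i) and (ii) supplied, your decomposition argument is a correct, more elementary alternative to the paper's citation of Lang--Steinberg; as written, the fully reliable part of your proposal is the Lang--Steinberg variant, which coincides with the paper's proof.
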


\begin{proof}

The Jordan type of a nilpotent matrix is obviously an invariant of its conjugacy class. Switching to $\tau$-Hermitian matrices, we have to check two claims:

\smallskip

(1)  for each $\la\in\Y_N$, there exists a matrix $X\in\Herm(N, q^2)$ with this Jordan type;

(2) if two matrices  $X,Y\in\Herm(N, q^2))$ are of the same Jordan type, then they are conjugated by an element of the subgroup $U(N,q^2)\subseteq GL(N,q^2)$.
\smallskip

To show (1) we exhibit a concrete model for $X$.

Consider the $N$-dimensional vector space $V$ over $\FF$ with the distinguished basis $\{v_{ij}\}$ indexed by the boxes $(i,j)$ of the Young diagram $\la$.
Let $X: V\to V$ be the operator defined by $X v_{ij}=v_{i,j-1}$, with the understanding that $v_{i, 0}:=0$.
This operator is nilpotent and of Jordan type $\la$. Next, let $\tau$ be the sesquilinear form on $V$ with the property that $\tau(v_{ij}, v_{i, \la_i-j+1})=1$ and all other scalar products are $0$.
Clearly, $\tau$ is nondegenerate and Hermitian, and the operator $X$ is $\tau$-Hermitian. 
Because all nondegenerate sesquilinear Hermitian forms of dimension $N$ are equivalent, this gives the desired result.

To show (2), we use the fact that if two matrices $X,Y\in\Herm(N,q^2)$ are conjugated by an element of $GL(N,q^2)$, then they are also conjugated by an element of $U(N,q^2)$. This follows from the Lang--Steinberg theorem, see the expository paper by Springer and Steinberg in \cite{SAG}, Section E, Example 3.5 (a) (this example concerns conjugacy classes in the group $U(N,q^2)$, but the same argument works for its Lie algebra $\uu(N,q^2)$ or the space $\Herm(N,q^2)$).
\end{proof}

\begin{proposition}\label{prop6.B}
Each nilpotent conjugacy class in $\uu(N,q^2)$ contains a strictly upper triangular matrix. 
\end{proposition}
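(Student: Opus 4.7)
The strategy is to show that for every nilpotent $Y\in\uu(N,q^2)$ there exists a complete \emph{self-dual flag} of $(\FF^N,\tau)$: a chain $0=F_0\subset F_1\subset\cdots\subset F_N=\FF^N$ with $F_i^\perp=F_{N-i}$ for all $i$, which is moreover strongly $Y$-stable in the sense that $YF_i\subseteq F_{i-1}$. Since $U(N,q^2)$ acts transitively on the set of complete self-dual flags (a standard Bruhat-type fact for the quasi-split group $U(N,q^2)$), we may move such a flag to the standard flag $F_i=\mathrm{span}(e_1,\ldots,e_i)$ by an element $g\in U(N,q^2)$; then $g^{-1}Yg$ is strictly upper triangular, as desired.

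To construct the self-dual $Y$-stable flag I would induct on $N$. The cases $N\leq 1$ are immediate. For $N\geq 2$, the crux is to find an isotropic line $F_1\subset\ker Y$; then $F_1\subset F_1^\perp=F_{N-1}$, the operator $Y$ preserves $F_1^\perp$ (as one checks using $\tau(Yv,w)=-\tau(v,Yw)$ for $w\in F_1\subset\ker Y$), and $Y$ descends to a nilpotent skew-Hermitian operator $\widetilde Y$ on the $(N-2)$-dimensional nondegenerate Hermitian space $V':=F_1^\perp/F_1$. By the induction hypothesis, $V'$ admits a complete self-dual $\widetilde Y$-stable flag, which lifts to $F_2\subset F_3\subset\cdots\subset F_{N-1}$ inside $F_1^\perp$, and together with $F_0,F_1,F_N$ completes the flag.

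It remains to exhibit an isotropic vector in $\ker Y$. If $\dim\ker Y\geq 2$, then either $\tau|_{\ker Y}$ is degenerate, in which case any nonzero vector in its radical is isotropic, or else it is nondegenerate, and we invoke the standard fact that every nondegenerate Hermitian form over $\FF$ of dimension $\geq 2$ admits an isotropic vector. If $\dim\ker Y=1$, then $Y$ has a single Jordan block of size $N$, and $\ker Y=\mathrm{im}(Y^{N-1})=\FF\cdot Y^{N-1}u$ for any $u\notin\mathrm{im}(Y)=\ker Y^{N-1}$; iterating the skew-Hermitian identity $\tau(Ya,b)=-\tau(a,Yb)$ then yields
$$
\tau(Y^{N-1}u,Y^{N-1}u)=(-1)^{N-1}\tau(u,Y^{2(N-1)}u)=0,
$$
since $Y^{2(N-1)}=0$ whenever $N\geq 2$.

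The main obstacle I expect is verifying that $U(N,q^2)$ acts transitively on complete self-dual flags of $(\FF^N,\tau)$---equivalently, that all Borel subgroups of $U(N,q^2)$ defined over $\F_q$ are $U(N,q^2)$-conjugate. This can be deduced from a Lang--Steinberg argument analogous to the one invoked at the end of the proof of Proposition~\ref{prop6.A}, or alternatively by a Gram--Schmidt-type procedure along the inductive flag: one picks $f_1\in F_1\setminus F_0$, rescales a representative $f_N\in F_N\setminus F_{N-1}$ so that $\tau(f_1,f_N)=1$, and then applies the induction in $V'$ to produce $f_2,\ldots,f_{N-1}$ with $\tau(f_r,f_s)=\delta_{r+s,N+1}$; the resulting basis change is an element of $U(N,q^2)$ carrying $Y$ to a strictly upper triangular matrix.
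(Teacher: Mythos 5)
Your proposal is correct and follows essentially the same route as the paper: induction on dimension, producing an isotropic line inside the kernel and passing to the nondegenerate quotient $F_1^\perp/F_1$, the only notable variation being that you handle the one-dimensional-kernel case by showing $\ker Y=\Ran(Y^{N-1})$ is automatically isotropic, whereas the paper rules that case out (for $\dim V\ge 2$) via an orthogonal-decomposition contradiction. Your explicit justification that $U(N,q^2)$ acts transitively on complete self-dual flags (Witt/Gram--Schmidt or Lang--Steinberg) merely fills in a translation step the paper leaves implicit.
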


Note that this is not evident, in contrast to the case of $\gl(n,q)$.
This statement depends on our chosen presentation of $\uu(N, q^2)$, i.e. on the choice of the form $\tau$ used to define it.

\begin{proof}
In invariant terms, the claim is equivalent to the following. Let $V$ be a vector space over $\wt\F$, of dimension $2m$ or $2m+1$, $\tau$ be a nondegenerate sesquilinear form on $V$, and $X:V\to V$ be a nilpotent, $\tau$-skew-Hermitian operator. Then there exists a complete flag $\{V_i\}$, which is preserved by $X$ and has the form
\begin{gather*}
\{0\}\subset V_1\subset\dots\subset V_m\subset  V_{m-1}^\perp\subset\dots\subset V_1^\perp\subset V, \quad \text{for $\dim V=2m$,}\\
\{0\}\subset V_1\subset\dots\subset V_m\subset V_m^\perp\subset  V_{m-1}^\perp\subset\dots\subset V_1^\perp\subset V, \quad \text{for $\dim V=2m+1,$}
\end{gather*}  
where the first $m$ subspaces are $\tau$-isotropic and the symbol $(\cdots)^\perp$ means orthogonal complement. 

We prove this by induction on $\dim V$. If $X=0$, there is nothing to prove, so we assume $X\ne0$. Since $X$ is nilpotent, its kernel $\ker X$ is nonzero. We claim that $\ker X$ contains a nonzero isotropic vector $v$ unless $\dim V=1$. Indeed, if $\dim(\ker X)\ge2$, then this holds true because any subspace of dimension $\ge2$ contains a nonzero isotropic vector. Next, if $\dim(\ker X)=1$ and $\ker X$ is non-isotropic, then we have the orthogonal decomposition $V=\ker X\oplus (\ker X)^\perp$, which is also $X$-invariant, but then $(\ker X)^\perp$ must be null and hence $\dim V=1$, because otherwise $X$ would have a nontrivial kernel in $(\ker X)^\perp$, which is impossible. 

The case $\dim V=1$ being trivial, we assume $\dim V\ge2$. Then we take as $V_1$ the one-dimensional isotropic subspace spanned by $v$. If $\dim V=2$, we are done. If $\dim V>2$, then the subspace $V_1^\perp$ is strictly larger than $V_1$. Since it is $X$-invariant (here we use the fact that $X$ is $\tau$-skew-Hermitian), our task is reduced to the quotient space  $V_1^\perp/V_1$. This argument yields the desired induction step. 
\end{proof}

\subsection{General conjugacy classes in $\uu(N, q^2)$}\label{sect6.3}
The arguments of Subsection \ref{sect2.4} are extended to the case of $\uu(N,q^2)$ with minor modifications. 

We denote by $\wt\T_N$ the set of conjugacy classes in $\uu(N, q^2)$. A matrix $X\in\uu(N, q^2)$ belonging to a class $\tau\in\wt\T_N$ is said to be \emph{of type $\tau$}. 

Let $\NSin(\uu(N,q^2))\subset\uu(N,q^2)$ be the subset of nonsingular matrices. It is $U(N,q^2)$-invariant. Let  $\wt\Si_N\subset\wt\T_N$ be the subset of conjugacy classes contained in $\NSin(\uu(N,q^2))$. Elements of $\wt\Si_N$ will be called \emph{nonsingular classes} or \emph{nonsingular types}.

\begin{lemma}\label{lemma6.A}
There is a natural bijection 
$$
\wt\T_N\leftrightarrow \bigsqcup_{s=0}^N (\wt\Si_s\times\Y_{N-s}).
$$
\end{lemma}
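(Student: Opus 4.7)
The plan is to imitate the proof of Lemma \ref{lemma2.A}, with the extra care that the Fitting decomposition of the underlying vector space is not only $X$-invariant but also $\tau$-orthogonal, so that the form restricts nondegenerately to each summand.

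Concretely, realize $X\in\uu(N,q^2)$ as a linear operator on $V := \FF^N$ equipped with the form $\tau$ from \eqref{std_form}. Set
\[
V_0 := \bigcup_{k\ge 1}\ker(X^k) = \ker(X^N), \qquad V' := \bigcap_{k\ge 1}\mathrm{Im}(X^k) = \mathrm{Im}(X^N).
\]
Standard linear algebra gives the Fitting decomposition $V = V'\oplus V_0$; both summands are $X$-invariant, $X|_{V'}$ is invertible, and $X|_{V_0}$ is nilpotent. The first step is to show that this decomposition is moreover $\tau$-orthogonal. Indeed, for $u\in V'$, write $u=X^N u'$; then for any $v\in V_0$,
\[
\tau(u,v) = \tau(X^N u', v) = (-1)^N\,\tau(u', X^N v) = 0,
\]
using the $\tau$-skew-Hermitian relation iterated $N$ times and the fact that $X^N v = 0$. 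Consequently $V'\perp V_0$, and since $\tau$ is nondegenerate on $V$, its restrictions $\tau' := \tau|_{V'}$ and $\tau_0 := \tau|_{V_0}$ are nondegenerate Hermitian forms on $V'$ and $V_0$ of dimensions $s$ and $N-s$, where $s := \dim V'$.

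Next, observe that the pair $(V',V_0)$ is uniquely determined by $X$ (no choices are involved), hence the decomposition is canonical and in particular $U(N,q^2)$-equivariant. Thus to each $X$ we associate the pair $(X|_{V'},X|_{V_0})$, where $X|_{V'}$ is a nonsingular $\tau'$-skew-Hermitian operator and $X|_{V_0}$ is a nilpotent $\tau_0$-skew-Hermitian operator. Because all nondegenerate sesquilinear Hermitian forms on a vector space of given dimension over $\wt\F$ are equivalent, we may identify $(V',\tau')$ with the standard $s$-dimensional Hermitian space and $(V_0,\tau_0)$ with the standard $(N-s)$-dimensional one; under such identifications $X|_{V'}$ determines an element of $\wt\Si_s$ and $X|_{V_0}$ determines, by Proposition \ref{prop6.A}, a partition in $\Y_{N-s}$. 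This defines a map $\wt\T_N \to \bigsqcup_{s=0}^N (\wt\Si_s\times\Y_{N-s})$.

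For the inverse direction, given $s$ and a pair in $\wt\Si_s\times\Y_{N-s}$, one picks representatives on standard Hermitian spaces of dimensions $s$ and $N-s$ and forms their orthogonal direct sum; the result is a $\tau$-skew-Hermitian operator on an $N$-dimensional Hermitian space, which, after identifying it with the standard one, yields an element of $\uu(N,q^2)$ whose $U(N,q^2)$-conjugacy class only depends on the initial pair. The main (and only genuinely delicate) point in checking bijectivity is that if two such direct-sum operators are conjugate in $U(N,q^2)$, then their nonsingular and nilpotent parts are separately conjugate via the corresponding smaller unitary groups $U(s,q^2)$ and $U(N-s,q^2)$; but this follows immediately from the uniqueness and $U(N,q^2)$-equivariance of the Fitting decomposition established above, together with Witt's extension theorem (or equivalently the equivalence of any two nondegenerate Hermitian forms of the same dimension) to identify the stabilizer of the decomposition with $U(s,q^2)\times U(N-s,q^2)$. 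This yields the claimed bijection.
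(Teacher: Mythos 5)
Your proof is correct and follows essentially the same route as the paper: the canonical Fitting decomposition $V=V'\oplus V_0$, the observation that it is $\tau$-orthogonal so the form restricts nondegenerately to each summand, and the identification of the two restricted operators (via equivalence of Hermitian forms and Proposition \ref{prop6.A}) as the complete pair of invariants. You merely spell out details the paper leaves implicit, such as the explicit computation $\tau(X^N u',v)=(-1)^N\tau(u',X^Nv)=0$ and the verification that conjugacy of direct sums forces conjugacy of the pieces under the smaller unitary groups.
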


(Here we regard $\wt\Si_0$ as a singleton, so that $\wt\Si_0\times\Y_N$ is identified with $\Y_N$.)

\begin{proof}
We argue as in the proof of Lemma \ref{lemma2.A}. In invariant terms, the set $\wt\T_N$ can be identified with the set of equivalence classes of triples $(V,\tau,X)$, where $V$ is an $N$-dimensional vector space over $\wt\F$, $\tau$ is a nondegenerate sesquilinear Hermitian form on $V$, and $X: V\to V$ is a $\tau$-skew-Hermitian operator. We consider again the canonical $X$-invariant decomposition $V=V'\oplus V_0$ with the property that $X\big|_{V'}$ is nonsingular and $X\big|_{V_0}$ is nilpotent, and observe that it is orthogonal with respect to $\tau$. This gives us two invariants: the type of $X\big|_{V'}$ and the Jordan type of $X\big|_{V_0}$, which determine the equivalence class of $(V,\tau,X)$ uniquely. In this way we obtain an embedding $\wt\T_N\hookrightarrow \bigsqcup_{s=0}^N (\wt\Si_s\times\Y_{N-s})$.  Finally, this map is also obviously surjective, which leads to  the desired bijection. 
\end{proof} 

Thus, the parametrization of general conjugacy classes in $\uu(N,q^2)$ is reduced to the explicit description of the nonsingular types. For completeness, we give it in the remark below, but in fact we do not use it  in the present paper.

\begin{remark}[cf. Remark \ref{rem2.A}]\label{rem6.A}
Let $f(x)\in\FF[x]$ be a nonconstant, monic polynomial with nonzero constant coefficient:
\begin{equation}\label{poly_p}
f(x) = x^d + c_1 x^{d-1} + \dots + c_{d-1} x + c_d, \text{ where } d = \deg(f)\geq 1 \text{ and } c_d \neq 0.
\end{equation}
Define $\wt{f}(x) := (-1)^{\deg(f)}\cdot\overline{f(-x)}$, that is,
$$
\wt{f}(x) = x^d + \wt c_1x^{d-1} + \dots + \wt c_{d-1} x + \wt c_d,\qquad \wt c_i := (-1)^i \overline{c_i}.
$$
The roots of the polynomial $\wt{f}(x)$ (in the algebraic closure of $\FF$) are the result of applying the map $x \mapsto -F(x) = -\overline{x} = -x^{q}$ to the roots of $f(x)$.

We say that  $f$ is almost-irreducible if its roots form a single orbit under the map $x \mapsto -F(x)$.  More explicitly, a polynomial  $f$ of the form \eqref{poly_p} is almost-irreducible if either
\smallskip

$\bullet$ $f$ is irreducible in $\FF[x]$ and $f = \wt{f}$ ($\deg(f)$ is odd in this case), or

$\bullet$ $f = g\wt{g}$, where $g$ is irreducible in $\FF[x]$ and $g \neq \wt{g}$ ($\deg(f)$ is even in this case). 

\smallskip

Denote by $\wt\Phi'$ the set of almost-irreducible polynomials in $\FF[x]$. There is a bijective correspondence between elements of $\wt\Si_s$ and maps $\wt{\boldsymbol{\mu}}: \wt\Phi'\to \Y$ such that $\wt{\boldsymbol{\mu}}(f) = \emptyset$ for all but finitely many polynomials $f\in\wt\Phi'$ and
$$
\sum_{f\in\wt\Phi'}\deg(f)|\wt{\boldsymbol{\mu}}(f)|=s,
$$
where $\deg(f)$ is the degree of $f$. This result can be extracted from \cite{Wa} or \cite{E1}. 
\end{remark}

\section{Invariant measures on skew-Hermitian matrices}\label{sect7}

We need to differentiate between unitary groups of even and odd dimension.

It is convenient to index rows and columns of matrices in the even unitary group $U(2n, q^2)$ by the $(2n)$-element set $\{-n, \dots, -1, 1, \dots, n\}$. This leads to the inclusions $U(2n, q^2) \hookrightarrow U(2n+2, q^2)$, and therefore to the inductive limit $U(2\infty, q^2) := \varinjlim U(2n, q^2)$. It is naturally a group of infinite size matrices of format $(\Z\setminus\{0\})\times(\Z\setminus\{0\})$. The group $U(2\infty, q^2)$ is countable, in fact, the $(i, j)$-entry of a matrix from $U(2\infty, q^2)$ equals $\de_{i, j}$, for all but finitely many entries.

Likewise, index rows and columns of matrices in the odd unitary group $U(2n+1, q^2)$ by the $(2n+1)$-element set $\{-n, \dots, 0, \dots, n\}$. This leads to the inductive limit group $U(2\infty+1, q^2) := \varinjlim U(2n+1, q^2)$, which is also countable and whose elements are infinite size matrices of format $\Z\times\Z$.

The superscripts $\e$ and $\o$ make reference to \emph{even} and \emph{odd}, respectively.
Both $U(2\infty, q^2)$ and $U(2\infty+1, q^2)$ are called \emph{infinite unitary groups}.

\subsection{The group $U(2\infty,q^2)$ and the space $\La^{\e}(q^2)$}

The infinite size matrices in this subsection are of format $(\Z\setminus\{0\})\times(\Z\setminus\{0\})$ --- they are now \emph{two-sided infinite} matrices. Let $\Mat^{\e}_{\infty}(q^2)$ be the space of matrices $M = [m_{i, j}]_{i, j\in\Z\setminus\{0\}}$ with entries in $\FF$. For $n\in\Z_{\geq 0}$, let $\La^{\e}_n(q^2)\subset\Mat^{\e}_{\infty}(q^2)$ be the subset of matrices $M$ such that:

\smallskip

$\bullet$  $m_{i, j} = 0$ whenever $i \geq j$, and $\max\{|i|, |j|\} > n$;

$\bullet$ $M$ is skew-Hermitian, i.e. $m_{-b, -a} = -\overline{m_{a, b}} (= -m_{a, b}^q)$ for all $a, b\in\Z\setminus\{0\}$. In particular, $m_{-k, k}\in\eps\cdot\F$ for all $k\in\Z\setminus\{0\}$.

\smallskip

For each $n\in\Z_{\geq 0}$, the set $\La^{\e}_n(q^2)$ is a vector space over $\F$, therefore a commutative additive group.
The set $\La^{\e}_0(q^2)$ consists of strictly upper triangular skew-Hermitian matrices of format $(\Z\setminus\{0\})\times(\Z\setminus\{0\})$.

For instance, a matrix from $\La^{\e}_2(q^2)$ looks as follows:
$$M = \begin{bmatrix} \ddots &  &  & & & & &  &  & \iddots \\ & 0 & * & * & * & * & * & * & \star &  \\ & 0 & 0 & * & * & * & * & \star & \overline{*} &  \\ & 0 & 0 & m_{-2, -2} & m_{-2, -1} & m_{-2, 1} &  m_{-2, 2} & \overline{*} & \overline{*} &  \\ & 0 & 0 & m_{-1, -2} & m_{-1, -1} & m_{-1, 1} &  m_{-1, 2} & \overline{*} & \overline{*} & \\ & 0 & 0 & m_{1, -2} & m_{1, -1} & m_{1, 1} &  m_{1, 2} & \overline{*} & \overline{*} &  \\ & 0 & 0 & m_{2, -2} & m_{2, -1} & m_{2, 1} &  m_{2, 2} & \overline{*} & \overline{*} & \\ & 0 & 0 & 0 & 0 & 0 & 0 & 0 & \overline{*} & \\  & 0 & 0 & 0 & 0 & 0 & 0 & 0 & 0 &  \\ \iddots & & & & & & & &  & \ddots \end{bmatrix}.$$
The $4\times 4$ submatrix $[m_{i, j}]_{i, j\in\{-2, -1, 1, 2\}}$ belongs to $\uu(4, q^2)$, an asterisk above the secondary diagonal stands for an arbitrary element from $\FF$, an asterisk with a bar below the secondary diagonal means that those elements are determined (conjugate and multiply by $-1$) by those above the diagonal, and a star on the secondary diagonal stands for an element from $\eps\cdot\F$.

Let $\La^{\e}(q^2)$ be the inductive limit group $\varinjlim \La^{\e}_n(q^2)$ arising from the natural inclusions $\La^{\e}_n(q^2) \hookrightarrow\La^{\e}_{n+1}(q^2)$. As a set, $\La^{\e}(q^2)$ consists of the almost strictly upper triangular skew-Hermitian matrices of format $(\Z\setminus\{0\})\times(\Z\setminus\{0\})$.
For $n\in\Z_{\geq 1}$, let $\La^{\e}_{-n}(q^2)\subset\La^{\e}_0(q^2)$ be the subgroup consisting of those matrices $M\in \La^{\e}_0(q^2)$ for which $m_{i, j} = 0$, whenever $|i|\leq n$ and $|j|\leq n$.
We equip the group $\La^{\e}(q^2)$ with the topology in which the subgroups $\La^{\e}_{-n}(q^2)$ form a fundamental system of neighborhoods of $0$.

Each $\La^{\e}_n(q^2)$ is compact and clopen, so $\La^{\e}(q^2)$ is locally compact. Any compact subset of $\La^{\e}(q^2)$ is contained in some $\La^{\e}_n(q^2)$.

The group $U(2\infty, q^2)$ acts on $\La^{\e}(q^2)$ by conjugation; this action preserves the topology of $\La^{\e}(q^2)$.

For any $M\in\Mat^{\e}_{\infty}(q^2)$, denote by $M^{[n]} := [m_{i, j}]_{i, j = -n, \cdots, -1, 1, \cdots, n}$ its central $(2n)\times (2n)$ submatrix. If $M\in\La_n^{\e}(q^2)$, then $M^{[n]}$ belongs to $\uu(2n, q^2)$. A basis for the topology of $\La^{\e}(q^2)$ is given by the elementary cylinder sets
\begin{equation*}
\Cyl_n^{\e}(X) := \{ M \in\La^{\e}(q^2) \mid M\in\La^{\e}_n(q^2),\ M^{[n]} = X \},\quad n\in\Z_{\geq 0},\quad X\in\uu(2n, q^2).
\end{equation*}
If $X_0$ is the unique element of the zero Lie algebra $\uu(0, q^2)$, we agree that $\Cyl^{\e}_0(X_0) = \La^{\e}_0(q^2)$.

Let $\Nil(\La^{\e}_n(q^2))\subset\La^{\e}_n(q^2)$ be the subset of matrices $M = [m_{i, j}]_{i, j\in\Z\setminus\{0\}}$ for which its submatrix $M^{[n]}$ is nilpotent.
Equivalently, $\Nil(\La^{\e}_n(q^2))$ is the union of cylinder sets $\Cyl_n^{\e}(X)$, where $X$ ranges over all nilpotent matrices from $\uu(2n, q^2)$.
Also let
$$\Nil(\La^{\e}(q^2)) := \bigcup_{n=0}^{\infty}{\Nil(\La^{\e}_n(q^2))} \subset\La^{\e}(q^2).$$
Matrices from $\Nil(\La^{\e}(q^2))$ are called pronilpotent matrices. Proposition \ref{prop6.B} shows that
$$
\Nil(\La^{\e}(q^2)) = \bigcup_{u\in U(2\infty, q^2)}{(u\cdot\La_0^{\e}(q^2)\cdot u^{-1})}.
$$
As a result, $\Nil(\La^{\e}(q^2))$ is clopen and $U(2\infty, q^2)$-invariant.

\subsection{The group $U(2\infty+1,q^2)$ and the space $\La^{\o}(q^2)$}

In this subsection, our infinite size matrices are of format $\Z\times\Z$.
Define the spaces $\Mat^{\o}_{\infty}(q^2)$, $\La_n^{\o}(q^2)$ ($n\in\Z$) and $\La^{\o}(q^2) = \varinjlim \La^{\o}_n(q^2)$, in exactly the same way as in the previous subsection, with the only difference that we replace $\e$ by $\o$ everywhere.

For instance, a matrix from $\La^{\o}_1(q^2)$ looks as follows:
$$M = \begin{bmatrix} \ddots &  &  & & & &  &  & \iddots \\ & 0 & * & * & * & * & * & \star &  \\ & 0 & 0 & * & * & * & \star & \overline{*} &  \\ & 0 & 0 & m_{-1, -1} & m_{-1, 0} &  m_{-1, 1} & \overline{*} & \overline{*} &  \\ & 0 & 0 & m_{0, -1} & m_{0, 0} &  m_{0, 1} & \overline{*} & \overline{*} & \\ & 0 & 0 & m_{1, -1} & m_{1, 0} &  m_{1, 1} & \overline{*} & \overline{*} & \\ & 0 & 0 & 0 & 0 & 0 & 0 & \overline{*} & \\  & 0 & 0 & 0 & 0 & 0 & 0 & 0 &  \\ \iddots & & & & & & &  & \ddots \end{bmatrix}.$$
The $3\times 3$ submatrix $[m_{i, j}]_{i, j\in\{-1, 0, 1\}}$ belongs to $\uu(3, q^2)$ (the asterisks, stars and asterisks with a bar have the same meaning as in the example from the previous subsection).

As before, define the topology on $\La^{\o}(q^2)$ by declaring that the subgroups $\La^{\e}_{-n}(q^2)$ ($n\in\Z_{\geq 1}$) form a fundamental system of neighborhoods of $0$. With respect to this topology, each $\La^{\o}_n(q^2)$ is compact and clopen, so $\La^{\o}(q^2)$ is locally compact. Moreover, the conjugation action of $U(2\infty+1, q^2)$ on $\La^{\o}(q^2)$ preserves the topology.

For any $M\in\Mat^{\o}_{\infty}(q^2)$, denote by $M^{[n]} := [m_{i, j}]_{i, j = -n, \dots, 0, \dots, n}$ its central $(2n+1)\times (2n+1)$ submatrix.
If $M\in\La^{\o}_n(q^2)$, then $M^{[n]}\in\uu(2n+1, q^2)$. The elementary cylinder sets, which form a basis for the topology of $\La^{\o}(q^2)$, are \begin{equation*}
\Cyl^{\o}_n(X) := \{ M \in\La^{\o}(q^2) \mid M\in\La^{\o}_n(q^2),\ M^{[n]} = X \},\quad n\in\Z_{\geq 0},\quad X\in\uu(2n+1, q^2).
\end{equation*}

Also define $\Nil(\La_n^{\o}(q^2))$ ($n\in\Z_{\geq 0}$) and $\Nil(\La^{\o}(q^2))$ as in the previous subsection, by replacing $\e$ by $\o$ everywhere. The set $\Nil(\La^{\o}(q^2))$ is clopen and $U(2\infty + 1, q^2)$-invariant. 

\subsection{Invariant Radon measures}
\begin{definition}
Define $\PUeven$ as the convex cone of  $U(2\infty, q^2)$-invariant Radon measures on $\La^{\e}(q^2)$. Likewise, $\PUodd$ is the convex cone of  $U(2\infty+1, q^2)$-invariant Radon measures on $\La^{\o}(q^2)$.
\end{definition}

Let $G$ stand for any of the groups $U(2\infty, q^2)$, $U(2\infty+1, q^2)$. As pointed out in Section \ref{sect1}, one can define a $\GLB$-type topological completion $\overline G\supset G$.
The completion $\overline G$ is defined as the group of infinite size matrices $M$ with finitely many entries below the diagonal and such that $MW_{\infty}M^* = W_{\infty}$, where $W_{\infty}$ is the infinite size matrix with $1$'s in the secondary diagonal and $0$'s elsewhere, and $M^*$ denotes the conjugate transpose of $M$. The topology on $\overline{G}$ can be uniquely characterized as the group topology such that the subgroup of upper triangular matrices contained in $\overline{G}$ (which is a profinite group) is an open subgroup. One can also give an alternative description, as in the case of  $\GLB$, see section \ref{sect3.1}.

The matrices in the topological completion $\overline{G}\supset G$ are of format $(\Z\setminus\{0\})\times(\Z\setminus\{0\})$, if $G = U(2\infty, q^2)$, and of format $\Z\times\Z$, if $G = U(2\infty+1, q^2)$.
We denote these topological completions by $\UB^{\e}\supset U(2\infty, q^2)$ and $\UB^{\o}\supset U(2\infty+1, q^2)$.

The groups $\UB^{\e}$ and $\UB^{\o}$ act by conjugation on $\La^{\e}(q^2)$ and $\La^{\o}(q^2)$, respectively. These are infinite-dimensional versions of the coadjoint action, as discussed in Section \ref{sect1}. The action maps
$$
\UB^{\e}\times\La^{\e}(q^2) \to \La^{\e}(q^2),\qquad \UB^{\o}\times\La^{\o}(q^2) \to \La^{\o}(q^2),
$$
are continuous. Measures from $\PUeven$ and $\PUodd$ are automatically invariant under the action of the larger groups $\UB^{\e}$ and $\UB^{\o}$, respectively: the proof is the same as in Proposition \ref{prop3.A}.  

\begin{definition}
Let $\PUeven_0\subset\PUeven$ be the subset (which is also a convex cone) of measures which are supported on $\Nil(\La^{\e}(q^2))$. Likewise, $\PUodd_0\subset\PUodd$ is the convex cone of  $U(2\infty+1, q^2)$-invariant Radon measures supported on $\Nil(\La^{\o}(q^2))$.
\end{definition}

\begin{remark}[cf. Remark \ref{rem3.A}]
By virtue of Proposition \ref{prop6.B}, nontrivial measures $P$ in $\PUeven_0$ and $\PUodd_0$ can be normalized so that $P(\La^\e_0) = 1$ and $P(\La^\o_0) = 1$, respectively. Their restrictions to $\La^\e_0$ and $\La^\o_0$ can be characterized as \emph{central probability measures} --- the definition can be adapted from \cite[Definition 4.3]{GKV}.
\end{remark}

\subsection{Relationship among $\PUeven$, $\PUodd$, $\PUeven_0$ and $\PUodd_0$}\label{general_measures_U}

This subsection is an analogue of Subsection \ref{sect5.2} ---  we outline how the structures of $\PUeven$ and $\PUodd$ can be understood from $\PUeven_0$ and $\M_0^{U(2\infty+1, q^2)}$. We omit proofs because they copy the arguments from Subsection \ref{sect5.2}.

Recall (see Subsection \ref{sect6.3}) our notation $\wt\T_N$ for the set of conjugacy classes in $\uu(N,q^2)$ and the decomposition $\wt\T_N=\bigsqcup_{s=0}^N{(\wt\Si_s\times \Y_{N-s})}$, where $\wt\Si_s$ denotes the set of nonsingular classes in $\uu(s,q^2)$.
Denote
$$\wt\Si := \bigsqcup_{m=0}^{\infty}{\wt\Si_m},$$
and whenever $\si\in\wt\Si_s$, we set $|\si|:=s$.

For any matrix $M$ from $\La^{\e}(q^2)$ or $\La^\o(q^2)$, an analogue of Lemma \ref{lemma5.A} holds: the nonsingular component $\si_n$ of the submatrix $M^{[n]}$ stabilizes as $n$ gets large; we call it the \emph{stable nonsingular type} and denote it by $\si(M)$; in this way, we obtain two maps $\La^{\e}(q^2)\to\wt\Si$ and $\La^{\o}(q^2)\to\wt\Si$.

Setting
$$
\La^{\e,\si}(q^2):=\{M\in\La^\e(q^2): \si(M)=\si\}, \quad \La^{\o,\si}(q^2):=\{M\in\La^\o(q^2): \si(M)=\si\}, \quad \si\in\wt\Si,
$$
we obtain the partitions
$$
\La^\e(q^2)=\bigsqcup_{\si\in\wt\Si}\La^{\e,\si}(q^2), \quad \La^\o(q^2)=\bigsqcup_{\si\in\wt\Si}\La^{\o,\si}(q^2),
$$
into nonempty invariant clopen subsets.

This in turn entails affine-isomorphisms of convex cones
$$
\PUeven \cong \prod_{\si\in\wt\Si}\M^\e_\si, \qquad\PUodd \cong \prod_{\si\in\wt\Si}\M^\o_\si,
$$
where, by definition, $\M^\e_\si\subset\PUeven$ is formed by the measures supported on $\La^{\e,\si}(q^2)\subset \La^\e(q^2)$, and $\M^\o_\si\subset\PUodd$ is formed by the measures supported on $\La^{\o,\si}(q^2)\subset \La^\o(q^2)$.

Thus, the description of $\PUeven$ and $\PUodd$ is reduced to the description of the cones $\M^\e_\si$ and $\M^\o_\si$, where $\si$ ranges over $\wt\Si$.

Note that if $\si$ is the unique element of $\wt\Si_0$, the corresponding invariant clopen subsets $\La^{\e,\si}(q^2)$ and $\La^{\o,\si}(q^2)$ are precisely the sets of pronilpotent matrices, so the corresponding cones coincide with the cones $\PUeven_0$ and $\PUodd_0$, respectively. The final result is an analogue of Proposition \ref{prop5.B}. 

\begin{proposition}\label{prop7.}
For any $\si\in\wt\Si$, the following affine-isomorphisms of convex cones hold
$$
\M^\e_\si\cong \begin{cases}
\PUeven_0, & \text{\rm for $|\si|$ even},\\
\PUodd_0, & \text{\rm for $|\si|$ odd};
\end{cases} \qquad 
\M^\o_\si\cong \begin{cases}
\PUodd_0, & \text{\rm for $|\si|$ even},\\
\PUeven_0, & \text{\rm for $|\si|$ odd}.
\end{cases}
$$
\end{proposition}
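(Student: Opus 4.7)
The strategy parallels the proof of Proposition \ref{prop5.B} in the $\glinfty$ setting; we describe the even case $\M^\e_\si$ and indicate the adjustments for the odd case. Fix $\si\in\wt\Si$ with $s:=|\si|$. Following the scheme of Lemma \ref{lemma4.B}, the first step is to identify measures in $\M^\e_\si$ with nonnegative harmonic functions on a branching graph $\Ga^{\e,\si}$ whose vertices at level $n\geq\lceil s/2\rceil$ are the Young diagrams $\mu\in\Y_{2n-s}$, and whose edge weight $L^{\e,\si}_{n+1,n}(\la,\mu)$ is the number of matrices $Y\in\uu(2n+2,q^2)$ of type $(\si,\la)$ whose central $(2n)\times(2n)$ corner equals a fixed $X\in\uu(2n,q^2)$ of type $(\si,\mu)$. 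That the edge weights are well-defined (independent of the choice of representative $X$) uses the $U(2\infty,q^2)$-invariance together with Proposition \ref{prop6.A}.

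The second step is to evaluate $L^{\e,\si}_{n+1,n}(\la,\mu)$. Choose a representative $X=\begin{bmatrix} S & 0 \\ 0 & N\end{bmatrix}$ in a basis adapted to the Hermitian form $\tau$, with $S$ nonsingular skew-Hermitian of type $\si$ and $N$ nilpotent skew-Hermitian of Jordan type $\mu$. Mimicking the block computation in the proof of Lemma \ref{lemma5.A}, the nonsingularity of $S$ allows us to eliminate, by a block-unitary conjugation, the coupling entries between the $S$-block of $Y$ and its newly added rows and columns. After this reduction, $L^{\e,\si}_{n+1,n}(\la,\mu)$ factors as the product of (i) a combinatorial factor $q^{c(s)}$ depending only on $s$, accounting for the free choice of the (now decoupled) coupling entries, and (ii) the edge weight of the pure-nilpotent branching graph encoding $\PUeven_0$ (when $2n-s$ is even, i.e.\ $s$ even) or of the one encoding $\PUodd_0$ (when $2n-s$ is odd, i.e.\ $s$ odd).

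Thirdly, observe that $\Ga^{\e,\si}$ then differs from a shifted copy of the pure-nilpotent graph for $\PUeven_0$ or $\PUodd_0$ only by a level-dependent multiplicative constant. Such constants can be absorbed into a gauge function on the vertex set, so the two graphs are similar in the sense of Definition \ref{def4.sim}. Applying Lemma \ref{homol_graphs} yields the claimed affine-isomorphism of cones in the $\M^\e_\si$ case. The $\M^\o_\si$ case is entirely analogous: the central corner now has odd size $2n+1$, so the nilpotent complement has size $2n+1-s$, whose parity is opposite to that of $s$. This produces the swapped pairing in the statement.

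The main obstacle is the unitary analogue of the coupling-killing conjugation from Lemma \ref{lemma5.A}. In the $\glinfty$ setting any unitriangular conjugation was admissible, whereas here the conjugating matrix must lie in $U(2n+2,q^2)$. One must exhibit a block unipotent element of this group that simultaneously clears the coupling entries in the newly added rows and columns while respecting the skew-Hermitian constraint; this reduces to solving a linear system of the form $SZ=x$ subject to a Hermitian compatibility relation, and the existence of a solution rests on the invertibility of $S$. A secondary issue is verifying that the residual factor $q^{c(s)}$ depends only on $s$ (and not on $\mu$ or $\la$), and carefully matching the shifted branching graph with the pure-nilpotent one in each of the four parity combinations.
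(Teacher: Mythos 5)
Your proposal is correct and follows essentially the same route as the paper, which in fact omits the proof precisely because it copies Subsection \ref{sect5.2}: fix the nonsingular block $S$, kill the coupling entries by a unitary analogue of the unitriangular conjugation from Lemma \ref{lemma5.A}, note that the resulting edge weights factor as a power of $q$ depending only on $|\si|$ times the pure-nilpotent weights of Proposition \ref{Lmula}, and conclude via similarity of branching graphs (Lemma \ref{homol_graphs}) with exactly the parity bookkeeping you describe. The obstacle you flag does resolve as you expect: conjugating by the unitary transvection sending $v\mapsto v-\tau(v,z)e_{-}$ and $e_{+}\mapsto e_{+}+z+c\,e_{-}$, where $z$ solves $Sz=x_1$ (the coupling vector) and $c+\overline{c}=-\tau(z,z)$ (solvable since $q$ is odd), clears the coupling with the $S$-block while leaving the coupling with the nilpotent block untouched and merely shifting the corner entry by an amount independent of that entry, so the residual factor is exactly $q^{2|\si|}$, uniform in $\mu$ and $\la$.
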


The conclusion is that the structures of $\PUeven$ and $\PUodd$ are determined by $\PUeven_0$ and $\PUodd_0$. Finally, the study of the latter convex cones is undertaken in the next section.

\section{The branching graphs of $\PUeven_{0}$, $\PUodd_0$ and Ennola's duality}\label{sec:ennola}

The main results of this section are two theorems.

Theorem \ref{thm8.A} contains a computation with HL functions; its significance is that it implies the existence of new branching graphs, which are based on HL functions with \emph{negative} parameter $t$. We call them the even and odd HL-deformed Young graphs.

Theorem \ref{Ustructure}  translates the problem of characterizing $\PUeven_0$ and $\PUodd_0$ into the problem of describing the convex cone of nonnegative harmonic functions on these graphs.

As applications, we exhibit examples of measures belonging to $\PUeven_0$ and $\PUodd_0$.

\subsection{The branching graphs $\Ga^{U(2\infty, q^2)}$ and $\Ga^{U(2\infty+1, q^2)}$}\label{branching_U}

Let $n\in\Z_{\geq 0}$ be arbitrary, and set $N = 2n$ or $N = 2n+1$.
Let $\mu\in\Y_{N}$ and $X\in\Nil(\uu(N, q^2))$ be nilpotent of Jordan type $\mu$.
Let $\la\in\Y_{N+2}$ and denote by $\wt{\Li}^{n+1}_n(\la, \mu)$ the number of (nilpotent, skew-Hermitian) matrices of the form
\begin{equation}\label{Yform}
Y := \begin{bmatrix} 0 & -x^*W_{N} & \eps y \\ 0 & X & x \\  0 & 0 & 0 \end{bmatrix},\quad x\in\FF^{N},\ y\in\F,
\end{equation}
which are of type $\la$. Recall that $W_{N}\in\Mat_N(\FF)$ was defined in \eqref{matrixW}. The quantity $\wt{\Li}^{n+1}_n(\la, \mu)$ does not depend on the specific choice of matrix $X$ of Jordan type $\mu$.

Our notation $\wt{\Li}^{n+1}_n(\la, \mu)$ is somewhat imprecise because it is unclear whether $|\mu| = 2n$, $|\la| = 2n+2$, or $|\mu| = 2n+1$, $|\la| = 2n+3$. However, this will not cause any issue.

\begin{definition}\label{def8.A}
Let $\mu\in\Y_N$ and $\la\in\Y_{N+2}$, where $N\ge0$. We write $\mu\nearrow\!\nearrow\la$ if $\mu\subset\la$ and the two boxes of the skew diagram $\th := \la\setminus\mu$ either lie in a single column (so that $\th$ is a vertical domino) or lie in two consecutive columns (in particular, $\th$ may be a horizontal domino).
\end{definition}

The following result gives explicit formulas for $\wt{\Li}^{n+1}_n(\la, \mu)$. This is an analogue of Proposition \ref{prop_borodin}, but the proof is more laborious; it is deferred to Section \ref{sect9}.  

\begin{proposition}\label{Lmula}
Let $n\in\Z_{\geq 0}$, and set $N = 2n$ or $N = 2n+1$. Let $\mu\in\Y_{N}$, $\la\in\Y_{N+2}$ be arbitrary. Recall the notation $m_i(\mu)$ from \eqref{eq2.A}.

{\rm(i)} The quantity $\wt{\Li}^{n+1}_n(\la, \mu)$ is nonzero if and only if $\mu\nearrow\!\nearrow\la$.

{\rm(ii)} Suppose $\mu\nearrow\!\nearrow\la$. The value of $\wt{\Li}^{n+1}_n(\la, \mu)$ depends on the columns where the boxes of $\th=\la\setminus\mu$ lie:

\smallskip

{\rm(1)} if $\th$ lies in a single column, with number $k$, then
$$
\wt{\Li}_n^{n+1}(\la, \mu) = \begin{cases}
q^{2N - 2\sum_{j \geq k}{m_j}(\mu)} \cdot(1 - (-q)^{-m_{k-1}(\mu)})(1 - (-q)^{1-m_{k-1}(\mu)}), 
& k>1,\\
q^{2N - 2\sum_{j \geq 1}{m_j(\mu)}}, & k=1;
\end{cases}
$$
\smallskip

{\rm(2)} otherwise, if $\th$ lies in two consecutive columns, with numbers $k$ and $k+1$,  then
$$
\wt{\Li}_n^{n+1}(\la, \mu) = \begin{cases}
q^{2N - 2\sum_{j \geq k}{m_j(\mu)}} \cdot (q - 1)(1 - (-q)^{-m_{k-1}(\mu)}), & k>1,\\
q^{2N - 2\sum_{j \geq 1}{m_j(\mu)}} \cdot (q - 1), & k = 1.
\end{cases}
$$
\end{proposition}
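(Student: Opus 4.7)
My plan is to prove Proposition~\ref{Lmula} by a direct enumeration of pairs $(x,y)\in\FF^N\times\F$, combining a two-stage extension with a Hermitian-isotropy count from which the Ennola-type $(-q)$-factors emerge naturally. First I would verify that every matrix $Y$ of the form~\eqref{Yform} is automatically skew-Hermitian with respect to $W_{N+2}$---a routine block identity using $W_N^*=W_N$, $\bar\eps=-\eps$, and the skew-Hermiticity of $X$---and nilpotent, and that $\wt\Li^{n+1}_n(\la,\mu)$ depends only on the Jordan type $\mu$ by $U(N,q^2)$-invariance. I would then adopt the canonical basis $\{v_{ij}\}_{(i,j)\in\mu}$ of $V_N$ furnished by the proof of Proposition~\ref{prop6.B}, in which $Xv_{ij}=v_{i,j-1}$ (with $v_{i,0}:=0$) and $\tau$ is ``antidiagonal'' along the Jordan chains; this puts both $X$ and the form in simultaneous normal form and makes all subsequent enumerations explicit.

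The main device is a two-stage decomposition via the $Y$-invariant subspace $U:=V_N\oplus\FF e^+\subset V_{N+2}$. Since $Y(e^+)=0$, $Y|_{V_N}$ maps into $U$, and $Y(e^-)=x+\eps y e^+\in U$, one has $Y(V_{N+2})\subseteq U$, so the Jordan type $\la$ of $Y$ factors through an intermediate type $\nu\in\Y_{N+1}$ (namely that of $Y|_U$); in particular $\mu\nearrow\nu\nearrow\la$, forcing $|\la|=|\mu|+2$. For the first stage, the matrix of $Y|_U$ in the basis $(e^+,v_{ij})$ has the block form $\begin{bmatrix}0&-x^*W_N\\0&X\end{bmatrix}$, whose Jordan type equals that of its transpose, which (after a basis flip) is a $\gl$-type one-vector extension of $X^T$ by the column vector $-W_N\bar x\in\FF^N$. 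Because $X^T$ has Jordan type $\mu$ and $x\mapsto-W_N\bar x$ is a bijection on $\FF^N$, Proposition~\ref{prop_borodin} applied over the field $\FF=\F_{q^2}$ (so that $q$ is replaced by $q^2$) counts the number of $x$'s producing a given $\nu$ with $\nu/\mu$ in column $k$.

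The second stage, counting the $y\in\F$ producing $Y$ of type $\la$, is more delicate because a small computation (e.g.\ $N=3$, $\mu=(1,1,1)$, $\la=(2,2,1)$) shows that the stage-2 condition genuinely depends on $x$ and not only on $\nu$. Working in the canonical form for $Y|_U$, this condition translates into an algebraic equation of the form $H_{\mu,k}(x)+cy=0$ (or a suitable vanishing/nonvanishing condition involving $H_{\mu,k}(x)$), where $H_{\mu,k}$ is a nondegenerate Hermitian form on a certain quotient of $V_N$ of dimension $m=m_{k-1}(\mu)$ over $\FF$ and $c\in\FF$ is explicit. The count then reduces to counting nonzero isotropic vectors of an $m$-dimensional Hermitian form, which equals $(q^m-(-1)^m)(q^{m-1}-(-1)^{m-1})$ in case~(1), and to a complementary anisotropy count in case~(2). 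For shapes $\la/\mu$ that are neither a single column nor two consecutive columns, the depth of $x+\eps y e^+$ in $\mathrm{Im}(Y|_U)^\bullet$ can differ from the depth of $e^+$ by at most one, so the stage-2 equation admits no solutions, giving the vanishing asserted in part~(i).

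The main obstacle is converting these Hermitian-vector counts into the Ennola-like factors $(1-(-q)^{\pm m})$ in the stated formulas; the translation device is the identity $1-(-q)^{-m}=(q^m-(-1)^m)/q^m$. Combining the stage-1 count with the $y$-freedom from stage~2 and, in case~(2), summing the contributions of the two possible intermediate $\nu$'s (corresponding to the two orderings in which the boxes of $\la/\mu$ may be added) produces the stated expressions involving $1-(-q)^{-m_{k-1}(\mu)}$ together with, in case~(1), the additional factor $1-(-q)^{1-m_{k-1}(\mu)}$. The whole computation is the unitary analogue of the $\gl$-calculation behind Proposition~\ref{prop_borodin}, with Ennola's duality manifesting concretely in the $(-1)^m$ signs of the Hermitian-isotropy counts.
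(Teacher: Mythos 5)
Your overall strategy --- direct enumeration of the pairs $(x,y)$, with the Ennola signs entering through counts of isotropic vectors of a Hermitian form --- is the same as the paper's, and your stage-1 observation (the $Y$-invariant subspace $U=\FF e^+\oplus V$, the type of $Y|_U$ computed by transposing and applying Proposition~\ref{prop_borodin} over $\FF=\F_{q^2}$) is correct and a legitimate reorganization. The genuine gap is at stage 2, which is where essentially all the work lies. You assert that once $x$ has depth $k$ (i.e.\ $x\in V_k\setminus V_{k-1}$, where $V_i:=\{v: X^{i-1}v\in\Ran X^i\}$), the placement of the second box is governed by a nondegenerate Hermitian form on a quotient of dimension $m_{k-1}(\mu)$ via an equation of the form $H_{\mu,k}(x)+cy=0$. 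This is precisely what has to be proved, and as stated it is not quite right: for $k\ge2$ the scalar $y$ does not enter at all (it is free and contributes a factor $q$), and the dichotomy is governed solely by the value $\tau_k(x,x)$ of a canonical Hermitian form $\tau_k$ on $V_k$ whose radical is exactly $V_{k-1}$, so that it descends to a nondegenerate form on $V_k/V_{k-1}$ of dimension $m_{k-1}(\mu)$; only for $k=1$ does $y$ enter the condition. Constructing $\tau_k$, showing it is well defined and Hermitian, identifying its radical, and noting that $\tau_i(x,x)=0$ automatically for every $i>k$ --- which is what forbids the second box from landing in any column other than $k$ or $k+1$ and hence proves part (i) --- is the technical heart of the paper's argument (Lemma~\ref{lemma9.C}, the rank reduction of the blocks of $Y^i$, and the count $c_0(m)=q^{2m-1}+(-1)^m q^{m-1}(q-1)$ of Lemma~\ref{lemma9.D}). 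Your sentence about the ``depth of $x+\eps y e^+$ differing from that of $e^+$ by at most one'' does not substitute for this analysis.

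A second, smaller inaccuracy: in case (2) you propose to sum over two intermediate shapes $\nu$, corresponding to the two orders in which the boxes of $\la\setminus\mu$ could be added. In fact only one occurs: the type of $Y|_U$ is always $\mu$ plus a box in column $k(x)$, and when the final shape has boxes in columns $k$ and $k+1$ one necessarily has $k(x)=k$; vectors of depth $k+1$ produce either a vertical domino in column $k+1$ or boxes in columns $k+1$ and $k+2$, never the shape at hand. So the second ordering contributes nothing, and your formula would survive only because that contribution is zero --- another sign that the stage-2 mechanism needs to be derived rather than asserted. The counts you quote, $(q^m-(-1)^m)(q^{m-1}-(-1)^{m-1})$ for nonzero isotropic vectors and the translation $1-(-q)^{-m}=(q^m-(-1)^m)/q^m$, are correct and do reproduce the stated formulas once the structure above is actually established.
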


In both variants, (1) and (2), the second formula (the one with $k=1$) can be viewed as a particular case of the first formula, provided we agree that $m_0(\mu)=+\infty$ and $(-q)^{-m_0(\mu)}:=0$. 

Note also that in variant (1), the right-hand side vanishes if $k>1$ and $m_{k-1}(\mu)\le1$, which agrees with the fact that in such a case appending a vertical domino to the $k$th column of $\mu$ is impossible. Likewise, in variant (2), the right-hand side vanishes if $k>1$ and $m_{k-1}(\mu)=0$, which agrees with the fact that then appending a box to the $k$th column is impossible.

\begin{definition}
The branching graph $\Ga^{U(2\infty, q^2)}$ (resp. $\Ga^{U(2\infty+1, q^2)}$) is defined by the following data:

--- the set $\Y_{\e}$ (resp. $\Y_{\o}$) of partitions of even size (resp. odd size) is the set of vertices, and the disjoint union $\Y_{\e} = \bigsqcup_{n \geq 0}{\Y_{2n}}$ (resp. $\Y_{\o} = \bigsqcup_{n \geq 0}{\Y_{2n+1}}$) defines a grading on the vertices;

--- an edge connects $\mu\in\Y_{2n}$ and $\la\in\Y_{2n+2}$ (resp. $\mu\in\Y_{2n+1}$ and $\la\in\Y_{2n+3}$) iff $\mu\nearrow\!\nearrow\la$; the associated edge weight is $\wt{\Li}^{n+1}_n(\la, \mu)$.
\end{definition}

This formulation is justified by the following lemma. Recall that we adopt the definition of branching graphs given in Definition \ref{def4.A1}. 

\begin{lemma}\label{lemma8.A}
The graded graphs with the vertex sets $\Y_\e$ and $\Y_\o$ and the edges $\mu\nearrow\!\nearrow\la$ satisfy the conditions listed in Definition \ref{def4.A1}.  
\end{lemma}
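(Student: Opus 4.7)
The proof requires verifying the five bulleted conditions of Definition~\ref{def4.A1}. Three of these are immediate: each level $\Y_{2n}$ (resp.\ $\Y_{2n+1}$) is a finite nonempty set, and the root $\Y_0 = \{\emptyset\}$ (resp.\ $\Y_1 = \{(1)\}$) is a singleton; the relation $\mu \nearrow\!\nearrow \lambda$ forces $|\lambda| = |\mu| + 2$, so edges connect only adjacent levels; and simplicity of edges is built into the definition of $\nearrow\!\nearrow$. Strict positivity of the weights will follow from Proposition~\ref{Lmula}, as explained below.

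The plan is then to exhibit, for every $\mu$, some $\lambda$ with $\mu \nearrow\!\nearrow \lambda$, and for every $\lambda$ of size $\geq 2$ (resp.\ $\geq 3$), some $\mu$ with $\mu \nearrow\!\nearrow \lambda$. The upward statement is trivial: take $\lambda := (\mu_1 + 2, \mu_2, \mu_3, \ldots)$, so that the two new boxes sit in row $1$ in the consecutive columns $\mu_1+1$ and $\mu_1+2$. For the downward statement I would use a short case analysis on $\ell := \ell(\lambda)$ and $\lambda_\ell$: (i) if $\lambda_\ell \geq 2$, remove the horizontal domino at the end of row $\ell$; (ii) if $\lambda_\ell = 1$ and $m_1(\lambda) \geq 2$, remove the vertical domino formed by the two bottom boxes of column $1$; (iii) if $\lambda_\ell = 1$, $m_1(\lambda) = 1$, and $\lambda_{\ell-1} \geq 3$, remove the horizontal domino at the end of row $\ell-1$; (iv) if $\lambda_\ell = 1$, $m_1(\lambda) = 1$, and $\lambda_{\ell-1} = 2$, remove the two boxes $(\ell-1, 2)$ and $(\ell, 1)$, which lie in the consecutive columns $1$ and $2$. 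In each case a direct check confirms that the result is a partition and that $\lambda \setminus \mu$ has the required shape.

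Strict positivity of each weight $\wt L^{n+1}_n(\lambda,\mu)$ will be read off from Proposition~\ref{Lmula}. Every such weight is a product of a positive power of $q$, possibly the factor $q-1>0$, and factors of the form $1 - (-q)^{-m}$ and, in the vertical-domino case, $1 - (-q)^{1-m}$, where $m := m_{k-1}(\mu)$. The hypothesis $\mu \nearrow\!\nearrow \lambda$ forces $m \geq 1$ when such a factor appears in the two-consecutive-columns case with $k>1$, and $m \geq 2$ in the vertical-domino case with $k>1$. A parity split then gives
\[
1 - (-q)^{-m} = \begin{cases} 1 - q^{-m}, & m \text{ even},\\ 1 + q^{-m}, & m \text{ odd}, \end{cases} \qquad 1 - (-q)^{1-m} = \begin{cases} 1 - q^{1-m}, & m \text{ odd},\\ 1 + q^{1-m}, & m \text{ even}, \end{cases}
\]
and all four right-hand sides are strictly positive, since $q \geq 2$ and $m$ lies in the indicated range.

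The only step that demands any real attention is the downward-connectivity case analysis, in particular the interlocked special case $\lambda_\ell = 1$, $m_1(\lambda)=1$, $\lambda_{\ell-1} = 2$, where one has to remove a ``staircase'' 2-strip rather than a true domino. There is no genuine obstacle beyond ensuring that this list of cases is exhaustive and that in case (iv) one actually lands in two \emph{consecutive} columns, both of which are straightforward to verify.
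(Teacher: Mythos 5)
Your proof is correct, and for the one substantive condition it takes a genuinely different route from the paper. The paper also dismisses everything except the downward condition as evident, but it then proves the stronger statement that every vertex is joined to the root by a path $\mu^{(0)}\nearrow\!\nearrow\cdots\nearrow\!\nearrow\la$, invoking the known fact that any Young diagram is reduced to its $2$-core by successive removal of $2$-rim hooks (with a citation to Stanley) and then disposing of staircase $2$-cores by removing two boxes in consecutive columns. You instead give a self-contained one-step removal argument, and your case analysis (i)--(iv) is exhaustive and each case does land on a partition with the removed pair in one column or two consecutive columns, including the staircase strip in case (iv); iterating it recovers the paper's path statement anyway (which the paper later uses for the positivity of $\xi_{\la/\varnothing}(t)$ in the Plancherel example), so nothing is lost. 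Two smaller points of comparison: your explicit upward witness $(\mu_1+2,\mu_2,\dots)$ is fine but could be replaced by the even simpler vertical domino in column $1$; and your parity check of the formulas in Proposition \ref{Lmula}(ii) for strict positivity of the weights $\wt\Li^{n+1}_n(\la,\mu)$ is valid but unnecessary, since these weights are cardinalities, hence nonnegative, and Proposition \ref{Lmula}(i) already says they are nonzero exactly when $\mu\nearrow\!\nearrow\la$ (for the graphs $\Y^{\HL}_{\e}(t)$, $\Y^{\HL}_{\o}(t)$ the corresponding positivity is Corollary \ref{cor8.A}, outside this lemma).
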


\begin{proof}
The only condition which is not evident is the following one: for any vertex $\la$ of level $n\geq 1$ (meaning that $\la\in \Y_{2n}$ or $\la\in\Y_{2n+1}$, depending on the parity), there should exist a vertex $\mu\nearrow\!\nearrow\la$.

In fact, we can check that there exists a path $\mu^{(0)}\nearrow\!\nearrow\cdots\nearrow\!\nearrow\mu^{(n)}=\la$ in the graph, where the starting point $\mu^{(0)}$ is the empty diagram or the one-box diagram, depending on the parity.
To check this, use the known fact that an arbitrary Young diagram can be reduced to its \emph{$2$-core} (which is either empty or a staircase shape $(m,m-1,\dots,1)$) by consecutive removal of $2$-rim hooks (the latter are vertical or horizontal dominoes), see e.g. \cite[solution to Exercise 7.59, item g]{StF} and references therein. Next, from a staircase shape of size $3$ or more, one can remove two boxes lying in two consecutive columns; after that the procedure can be iterated; in the end we achieve the root, which is either the empty diagram or the one-box diagram.
\end{proof}

The following lemma is proved exactly in the same way as Lemma \ref{lemma4.B}.

\begin{lemma}\label{lemma8.B}
There is a bijective correspondence between measures $P\in\PUeven_0$ (resp.,  $P\in\PUodd_0$) and nonnegative harmonic functions $\vp^\Ga$ on the branching graph $\Ga=\Ga^{U(2\infty,q^2)}$ (resp., $\Ga=\Ga^{U(2\infty+1,q^2)}$), uniquely characterized by the property that for any $N=2n$ (resp. $N=2n+1$) and any nilpotent matrix $X\in\uu(N,q^2)$ of a given Jordan type $\mu\in\Y_N$, one has
\begin{equation}\label{eq8.E}
P(\Cyl_N(X))=\vp^\Ga(\mu).
\end{equation}
In the equation \eqref{eq8.E}, $\Cyl_N(X)$ stands for $\Cyl_N^{\e}(X)$ if $P\in\PUeven_0$, and for $\Cyl_N^{\o}(X)$ if $P\in\M_0^{U(2\infty+1, q^2)}$.
\end{lemma}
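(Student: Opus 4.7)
The plan is to follow the same three-step strategy as in Lemma \ref{lemma4.B}, replacing the role of Proposition \ref{prop_borodin} by Proposition \ref{Lmula} and adjusting for the fact that the level in the unitary setting jumps matrix size by $2$ rather than by $1$.

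First, I would verify that formula \eqref{eq8.E} is a consistent definition, i.e.\ $P(\Cyl_N(X))$ only depends on the Jordan type $\mu$ of $X$. By Proposition \ref{prop6.A}, any two nilpotent matrices $X_1, X_2\in\uu(N,q^2)$ of the same Jordan type are conjugate by some $u\in U(N,q^2)$. Embedding $u$ into $U(2\infty,q^2)$ (resp.\ $U(2\infty+1,q^2)$) as the identity outside the central $N\times N$ block, a direct block computation shows that the conjugation map $M\mapsto uMu^{-1}$ preserves $\La^{\e}_n(q^2)$ (resp.\ $\La^{\o}_n(q^2)$) and sends $\Cyl_N(X_1)$ bijectively onto $\Cyl_N(X_2)$. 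Invariance of $P$ gives $P(\Cyl_N(X_1))=P(\Cyl_N(X_2))$, so $\vp^\Ga(\mu)$ is well defined.

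Second, I would verify harmonicity. Given $X\in\Nil(\uu(N,q^2))$ of type $\mu$, view $\Cyl_N(X)$ as a cylinder set of level $n+1$. Concretely, for $M\in\La^\e_n(q^2)$ with $M^{[n]}=X$, the central $(N+2)\times(N+2)$ block $M^{[n+1]}$ is forced, by the combined conditions "$m_{i,j}=0$ for $i\ge j$ and $\max\{|i|,|j|\}>n$" and skew-Hermiticity, to take exactly the form \eqref{Yform}, with $x\in\FF^N$ and $y\in\F$ free parameters; the skew-Hermitian condition ties the top row to the right column and forces the top-right corner to lie in $\eps\cdot\F$. The same applies in the odd case with $N=2n+1$. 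Thus
\[
\Cyl_N(X)=\bigsqcup_{Y}\Cyl_{N+2}(Y),
\]
where $Y$ runs over matrices of the shape \eqref{Yform}. By definition of $\wt\Li^{n+1}_n(\la,\mu)$, exactly that many of these $Y$'s have Jordan type $\la$; combined with Proposition \ref{Lmula}(i), which shows the only contributing $\la$'s are those with $\mu\nearrow\!\nearrow\la$, we obtain
\[
\vp^\Ga(\mu) = P(\Cyl_N(X)) = \sum_{Y} P(\Cyl_{N+2}(Y)) = \sum_{\la:\,\mu\nearrow\!\nearrow\la} \wt\Li^{n+1}_n(\la,\mu)\,\vp^\Ga(\la),
\]
which is precisely the harmonicity condition on $\Ga^{U(2\infty,q^2)}$ (resp.\ $\Ga^{U(2\infty+1,q^2)}$).

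Third, for the converse direction, starting from $\vp^\Ga\in\Harm_+(\Ga)$ I would use \eqref{eq8.E} to define a nonnegative, finitely additive set function $P$ on the ring of cylinder sets contained in $\Nil(\La^{\e}(q^2))$ (resp.\ $\Nil(\La^{\o}(q^2))$); harmonicity is exactly the compatibility across levels needed for this to be well defined. An analogue of Lemma \ref{lemma3.A} for $\La^{\e}(q^2)$ and $\La^{\o}(q^2)$ (the proof transfers verbatim, since the key ingredients---cylinder sets being clopen compact, the Carathéodory extension theorem, and the fact that every compact set lies in some $\La^{\e}_n(q^2)$ or $\La^{\o}_n(q^2)$---are all in place) then extends $P$ uniquely to a Radon measure on the ambient space, supported on the pronilpotent part, and $U(2\infty,q^2)$-invariance (resp.\ $U(2\infty+1,q^2)$-invariance) is manifest from the construction. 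The two constructions are inverse to each other by uniqueness of extension, yielding the claimed bijection. No step is a real obstacle here: the only new input beyond Lemma \ref{lemma4.B} is Proposition \ref{Lmula}, and the detailed combinatorial computation behind it is deferred to Section \ref{sect9}.
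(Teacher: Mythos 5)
Your proposal is correct and follows exactly the route the paper takes: the paper proves Lemma \ref{lemma8.B} by repeating the argument of Lemma \ref{lemma4.B} verbatim, with Proposition \ref{Lmula} in place of Proposition \ref{prop_borodin}, the decomposition of $\Cyl_N(X)$ into level-$(n+1)$ cylinders of the shape \eqref{Yform}, and the Carath\'eodory-type extension of Lemma \ref{lemma3.A} transferred to $\La^{\e}(q^2)$ and $\La^{\o}(q^2)$. Your added details (well-definedness via Proposition \ref{prop6.A} and the block computation forcing the form \eqref{Yform}) are accurate and consistent with the paper's intended argument.
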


\subsection{Even and odd HL-deformed Young graphs}

Recall the $Q$-HL functions $Q_{\la}(; t)$, $\la\in\Y$, discussed in Section \ref{HL_sec}. In that section, the parameter $t$ belonged to the interval $(0, 1)$; here, we need that $t \in (-1, 0)$.

It will be convenient to slightly modify the $Q$-HL functions as follows:
$$
\wt{Q}_{\la}(; t) := (-1)^{n(\la)}Q_{\la}(; t),\quad\la\in\Y.
$$
Given $\mu\in\Y$, consider the expansion 
 \begin{equation*}
\left((1 - t^2)p_2 \right) \cdot \wt{Q}_\mu(;t)=\sum_{\la}{\xi_{\la/\mu}(t) \wt{Q}_\la(;t)}.
\end{equation*}
Here, $\la$ ranges over the set of Young diagrams with $|\la|=|\mu|+2$ and $\xi_{\la/\mu}(t)$ are certain coefficients.

In the next theorem we compute these coefficients. Let, as usual, $m_k(\mu)$ denotes the number of rows in $\mu$ of a given length $k$. We also agree that $m_0(\mu) = +\infty$ and so $t^{m_{k-1}(\mu)}=t^{m_{k-1}(\mu)-1}=0$ for $k=1$.

\begin{theorem}\label{thm8.A}
The coefficient $\xi_{\la/\mu}(t)$ vanishes unless $\mu\nearrow\!\nearrow\la$. Next, suppose $\mu\nearrow\!\nearrow\la$ and set $\th=\la\setminus\mu$; then we have: 

\smallskip

{\rm(1)} if $\th$ lies in a single column, with number $k$, then
$$
\xi_{\la/\mu}(t)=(1-t^{m_{k-1}(\mu)})(1-t^{m_{k-1}(\mu)-1});
$$

\smallskip

{\rm(2)} if $\th$ lies in two consecutive columns, with numbers $k$ and $k+1$, then
$$
\xi_{\la/\mu}(t)=(-t)^{m_k(\mu)}(1+t)(1-t^{m_{k-1}(\mu)}).
$$
\end{theorem}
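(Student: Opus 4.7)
The starting point is the identity
$$
(1-t^2)\,p_2 \;=\; 2\,Q_{(2)}(;t)\;-\;(1-t)^2\,p_1^{\,2},
$$
obtained by extracting the coefficient of $y^2$ in
$$
\sum_{r\ge 0} Q_{(r)}(;t)\,y^r \;=\; \prod_i\frac{1-tx_i y}{1-x_i y} \;=\; \exp\!\Bigl(\sum_{r\ge 1}\tfrac{1-t^r}{r}\,p_r\, y^r\Bigr).
$$
Multiplying by $Q_\mu(;t)$ and using $\wt Q_\la=(-1)^{n(\la)}Q_\la$, the coefficient we seek becomes
$$
\xi_{\la/\mu}(t)\;=\;(-1)^{n(\la)-n(\mu)}\bigl[\,2\,\Phi_{\la/\mu}(t)\;-\;A_{\la/\mu}(t)\,\bigr],
$$
where $A_{\la/\mu}(t)$ and $\Phi_{\la/\mu}(t)$ denote the coefficients of $Q_\la(;t)$ in the expansions of $(1-t)^2 p_1^{\,2}\,Q_\mu(;t)$ and $Q_{(2)}(;t)\,Q_\mu(;t)$, respectively.

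By iterating the Pieri rule \eqref{pieri_HL} twice,
$$
A_{\la/\mu}(t) \;=\; \sum_{\nu:\,\mu\nearrow\nu\nearrow\la}\psi_{\nu/\mu}(t)\,\psi_{\la/\nu}(t),
$$
a sum with one or two intermediate diagrams $\nu$ depending on whether the two boxes of $\th := \la\setminus\mu$ share a column or not. The coefficient $\Phi_{\la/\mu}(t)$ is extracted from the horizontal-strip Pieri rule for $q_2=Q_{(2)}(;t)$ (Macdonald \cite[Ch.~III, \S5]{Mac}) combined with the rescaling $Q_\la=b_\la P_\la$. After simplification $\Phi_{\la/\mu}(t)$ vanishes unless $\th$ is a horizontal $2$-strip, in which case it equals $1-t^{m_{k-1}(\mu)}$ if $\th$ occupies columns $k,k+1$, and $(1-t^{m_{k_1-1}(\mu)})(1-t^{m_{k_2-1}(\mu)})$ if $\th$ occupies non-adjacent columns $k_1<k_2$.

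The proof concludes with a case analysis on the shape of $\th$. \emph{Vertical domino in column $k$:} here $\th$ is not a horizontal strip, so $\Phi_{\la/\mu}=0$; the unique intermediate $\nu$ contributes $A_{\la/\mu}=(1-t^{m_{k-1}(\mu)})(1-t^{m_{k-1}(\mu)-1})$; and since the two boxes are added in consecutive rows, $(-1)^{n(\la)-n(\mu)}=-1$, recovering formula (1). \emph{Boxes in columns $k$ and $k+1$:} direct substitution of the formulas for $\Phi_{\la/\mu}$ and $A_{\la/\mu}$ yields $2\Phi_{\la/\mu}-A_{\la/\mu}=(1-t^{m_{k-1}(\mu)})\,t^{m_k(\mu)}(1+t)$; the rows $r_1\ge r_2$ of the two added boxes satisfy $r_1-r_2=m_k(\mu)$ (with $r_1=r_2$ in the horizontal-domino case $m_k(\mu)=0$), so $(-1)^{n(\la)-n(\mu)}=(-1)^{m_k(\mu)}$, giving formula (2). \emph{Boxes in non-adjacent columns $k_1<k_2$ with $k_2-k_1\ge 2$:} the two paths in $A_{\la/\mu}$ each contribute the same factor $(1-t^{m_{k_1-1}(\mu)})(1-t^{m_{k_2-1}(\mu)})$, so $A_{\la/\mu}=2\Phi_{\la/\mu}$ and $\xi_{\la/\mu}(t)=0$, consistent with the vanishing claim. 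The principal technical point is the explicit evaluation of $\Phi_{\la/\mu}(t)$ via Macdonald's horizontal-strip Pieri rule and the careful bookkeeping of multiplicities $m_i(\mu)$ and signs $(-1)^{n(\la)-n(\mu)}$; the degenerate subcases $k=1$ and $k_1=1$ are handled uniformly by the convention $t^{m_0(\mu)}:=0$.
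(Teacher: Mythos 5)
Your proposal is correct and follows essentially the same route as the paper: the identity $(1-t^2)p_2=2Q_{(2)}(;t)-Q_{(1)}(;t)^2$ (your $(1-t)^2p_1^2$ is exactly $Q_{(1)}(;t)^2$), the expression $\xi_{\la/\mu}=(-1)^{n(\la)-n(\mu)}\bigl(2\psi_{\la/\mu}-\sum_\nu\psi_{\nu/\mu}\psi_{\la/\nu}\bigr)$ via the Pieri rule, and the same three-case analysis (vertical domino, adjacent columns, non-adjacent columns) with the same sign and multiplicity bookkeeping. The evaluations of the horizontal-strip coefficients and of the parity of $n(\la)-n(\mu)$ match the paper's, so no further comment is needed.
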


Here is an immediate corollary.

\begin{corollary}\label{cor8.A}
Suppose $t\in(-1, 0)$. Then the coefficients $\xi_{\la/\mu}(t)$ are strictly positive for each pair of diagrams $\mu\nearrow\!\nearrow\la$.
\end{corollary}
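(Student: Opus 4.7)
The plan is to unpack each of the two formulas in Theorem \ref{thm8.A} and verify factor-by-factor that, for $t \in (-1,0)$, every factor is strictly positive, using the constraints on $m_{k-1}(\mu)$ and $m_k(\mu)$ that are forced by the hypothesis $\mu\nearrow\!\nearrow\la$.

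First I would record the following general observation: if $j$ is a positive integer and $t \in (-1,0)$, then $|t^j| < 1$, so $t^j \in (-1,1)$, and therefore $1 - t^j > 0$. The trivial case $j = 0$ gives $1 - 1 = 0$ but never occurs under our sign conventions, because whenever $m_{k-1}(\mu)$ appears in an exponent and $k > 1$, the condition $\mu \nearrow\!\nearrow\la$ forces $m_{k-1}(\mu) \geq 1$ (to have a column of height $k-1$ available to grow a box in column $k$), and for $k = 1$ we use the convention $m_0(\mu) = +\infty$, which makes the relevant factors equal to $1$ rather than $1 - 1$.

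For case~(1), a vertical domino in column $k$: if $k = 1$ then by convention both factors are $1$, so $\xi_{\la/\mu}(t) = 1 > 0$. If $k > 1$, the existence of a vertical domino in column $k$ requires $m_{k-1}(\mu) \geq 2$, so both exponents $m_{k-1}(\mu)$ and $m_{k-1}(\mu) - 1$ are at least $1$, and the opening observation gives $(1 - t^{m_{k-1}(\mu)})(1 - t^{m_{k-1}(\mu)-1}) > 0$.

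For case~(2), a horizontal pair in columns $k$ and $k+1$: the factor $(-t)^{m_k(\mu)}$ is a nonnegative power of $-t \in (0,1)$, hence strictly positive; the factor $1 + t$ is strictly positive because $t > -1$; and the factor $1 - t^{m_{k-1}(\mu)}$ is strictly positive either by convention (when $k = 1$, where it equals $1$) or by the opening observation (when $k > 1$, where the existence of a new box in column $k$ forces $m_{k-1}(\mu) \geq 1$). Multiplying the three strictly positive factors yields $\xi_{\la/\mu}(t) > 0$.

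There is no real obstacle here; the only thing to watch is the bookkeeping of the $k = 1$ edge cases under the convention $m_0(\mu) = +\infty$, and the verification that the combinatorial constraint $\mu \nearrow\!\nearrow\la$ really does rule out any vanishing exponent for $k > 1$. Once those two small points are handled, the corollary follows immediately from the formulas in Theorem~\ref{thm8.A}.
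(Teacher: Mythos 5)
Your proof is correct and is essentially the paper's argument: the paper states the corollary as an immediate consequence of the formulas in Theorem \ref{thm8.A}, and your factor-by-factor check (including the constraints $m_{k-1}(\mu)\ge 2$ for a vertical domino with $k>1$, $m_{k-1}(\mu)\ge 1$ in case (2) with $k>1$, and the $k=1$ convention $m_0(\mu)=+\infty$) is exactly the intended verification.
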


\begin{proof}[Proof of Theorem \ref{thm8.A}]
From the definition of the one-row HL functions $Q_{(r)}(;t)$, it follows that
\begin{equation*}
(1-t^2)p_2=2Q_{(2)}(;t)-(Q_{(1)}(;t))^2.
\end{equation*}
Next, in the expansion 
\begin{equation*}
Q_{(r)}(;t)Q_\mu(;t)=\sum_{\la}\psi_{\la/\mu}(t) Q_\la(;t), \qquad r=1,2,\dots,
\end{equation*}
the coefficients $\psi_{\la/\mu}(t)$ vanish unless $\la\succ\mu$, meaning that $\la\supset\mu$ and $\la\setminus\mu$ is a horizontal strip (of length $r$), see \cite[Ch. III, (5.7$'$)]{Mac}. It follows that
\begin{equation}\label{eq8.A}
\xi_{\la/\mu}(t)=(-1)^{n(\la)-n(\mu)}\left(2\psi_{\la/\mu}(t)-\sum_{\nu: \,\mu\nearrow\nu\nearrow\la}\psi_{\nu/\mu}(t)\psi_{\la/\nu}(t)\right).
\end{equation}

Below we use the recipe for computing the coefficients $\psi_{\la/\mu}(t)$ for $\la\succ\mu$, explained in \cite[Ch. III, (5.8$'$)]{Mac}. We examine three possible cases.

\medskip

1. \emph{$\th$ is a vertical domino lying in column $k$}.  Then $\th$ is not a horizontal strip, so that $\psi_{\la/\mu}=0$. Next, there is a single $\nu$ situated between $\mu$ and $\la$, and 
$$
\psi_{\nu/\mu}(t)=1-t^{m_{k-1}(\mu)}, \quad \psi_{\la/\nu}(t)=1-t^{m_{k-1}(\mu)-1}.
$$
Finally, $n(\la)-n(\mu)=2\mu'_k+1$ is odd. This leads  to the formula in (1).

\medskip

2. \emph{$\th$ lies in two consecutive columns, with numbers $k$ and $k+1$}. Then $\psi_{\la/\mu}=1-t^{m_{k-1}(\mu)}$. Next, there are two intermediate diagrams $\nu$ and we have
$$
\sum_{\nu: \,\mu\nearrow\nu\nearrow\la}\psi_{\nu/\mu}(t)\psi_{\la/\nu}(t)=
(1-t^{m_{k-1}(\mu)})(1-t^{m_k(\mu)+1})+(1-t^{m_k(\mu)})(1-t^{m_{k-1}(\mu)})
$$
Finally, we have $n(\la)-n(\mu)=\mu'_k+\mu'_{k+1}$, which has the same parity as $\mu'_k-\mu'_{k+1}=m_k(\mu)$. This leads to the formula in (2).

\medskip

3. \emph{$\th$ lies in two columns, with numbers $k$ and $\ell$, where $\ell>k+1$}. We claim that in this case the difference in \eqref{eq8.A} is equal to $0$, so that $\xi_{\la/\mu}(t)=0$. Indeed, we have 
$$
2\psi_{\la/\mu}(t)=2(1-t^{m_{k-1}(\mu)})(1-t^{m_{\ell-1}(\mu)}).
$$
Next, there is two variants for $\nu$, and each of them produces the above expression, without the prefactor $2$. Thus, the difference in \eqref{eq8.A} vanishes.

This completes the proof.
\end{proof}

By virtue of Corollary \ref{cor8.A} and  Lemma \ref{lemma8.A}, the following definition makes sense (cf. Definition \ref{def4.HL}). 

\begin{definition}
Let $t\in(-1,0)$ be arbitrary. The \emph{even HL-deformed Young graph} $\Y^{\HL}_{\e}(t)$ (resp. \emph{odd HL-deformed Young graph} $\Y^{\HL}_{\o}(t)$) is the branching graph given by:

--- $\Y_{\e} = \bigsqcup_{n \geq 0}{\Y_{2n}}$ (resp. $\Y_{\o} = \bigsqcup_{n \geq 0}{\Y_{2n+1}}$) is the graded set of vertices;

--- an edge connects $\mu\in\Y_{2n}$ and $\la\in\Y_{2n+2}$ (resp. $\mu\in\Y_{2n+1}$ and $\la\in\Y_{2n+3}$) iff $\mu\nearrow\!\nearrow\la$, and then the corresponding edge weight is $\xi_{\la/\mu}(t)$.
\end{definition}

\subsection{Final result}

\begin{theorem}\label{Ustructure}

{\rm(i)}  The measures $P\in\PUeven_{0}$ {\rm(}resp., $P\in\PUodd_0${\rm)} are in one-to-one correspondence with the nonnegative harmonic functions $\vp$ on the graphs $\Y^{\HL}_{\e}(-q^{-1})$ {\rm(}resp., $\Y^{\HL}_{\o}(-q^{-1})${\rm)}.  

This correspondence $P\leftrightarrow \varphi$ is uniquely determined by the property that the mass of an elementary cylinder set $\Cyl_N(X)$, where $N=2n$ is even {\rm(}resp., $N=2n+1$ is odd{\rm)} and $X\in\Nil(\uu(N,q^2))$ is a nilpotent matrix of Jordan type $\la\in\Y_N$, is given by 
$$
P(\Cyl_N(X))=q^{n(\la)-N(N-1)/2}\varphi(\la).
$$ 
Here, $\Cyl_N(X)$ stands for $\Cyl_N^{\e}(X)$ if $P\in\PUeven_0$ and for $\Cyl_N^{\o}(X)$ if $P\in\PUodd_0$.

{\rm(ii)} The correspondence $P\leftrightarrow \varphi$ establishes affine-isomorphisms of convex cones,
$$
\PUeven_0 \leftrightarrow \Harm_+(\Y^{\HL}_{\e}(-q^{-1})), \qquad
\PUodd_0 \leftrightarrow \Harm_+(\Y^{\HL}_{\o}(-q^{-1})).
$$
In particular, ergodic measures $P$ correspond precisely to extreme harmonic functions $\vp$. 
\end{theorem}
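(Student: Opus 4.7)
The plan is to combine three already available ingredients: Lemma \ref{lemma8.B}, which identifies measures in $\PUeven_0$ (resp.\ $\PUodd_0$) with nonnegative harmonic functions on the branching graph $\Ga^{U(2\infty,q^2)}$ (resp.\ $\Ga^{U(2\infty+1,q^2)}$) via $P(\Cyl_N(X)) = \vp^\Ga(\la)$; the explicit edge weights $\wt\Li^{n+1}_n(\la,\mu)$ furnished by Proposition \ref{Lmula}; and the coefficients $\xi_{\la/\mu}(t)$ computed in Theorem \ref{thm8.A}, which define the edges of $\Y^\HL_\e(t)$ and $\Y^\HL_\o(t)$. The central step is to verify that, at the special value $t = -q^{-1}$, the graphs $\Ga^{U(2\infty,q^2)}$ and $\Y^\HL_\e(-q^{-1})$ (and likewise $\Ga^{U(2\infty+1,q^2)}$ and $\Y^\HL_\o(-q^{-1})$) are similar in the sense of Definition \ref{def4.sim}, with the same gauge function as in the $GL$-case,
$$ f(\la) := q^{n(\la) - \binom{|\la|}{2}}. $$
Granted this, Lemma \ref{homol_graphs} produces the affine-isomorphism $\Harm_+(\Y^\HL_\e(-q^{-1})) \stackrel{\cong}{\to} \Harm_+(\Ga^{U(2\infty,q^2)})$ given by $\vp \mapsto \vp \cdot f$, and similarly in the odd case. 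Composing with Lemma \ref{lemma8.B} yields part (i): $P(\Cyl_N(X)) = \vp^\Ga(\la) = f(\la)\vp(\la) = q^{n(\la) - N(N-1)/2}\vp(\la)$. Part (ii) is then automatic, since multiplication by a strictly positive function is an affine-isomorphism of convex cones which preserves extreme rays, hence sends ergodic measures to extreme harmonic functions.

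The similarity claim reduces, in view of Corollary \ref{cor8.A} and the matching vanishing loci ($\wt\Li^{n+1}_n(\la,\mu) \ne 0 \iff \mu \nearrow\!\nearrow \la \iff \xi_{\la/\mu}(-q^{-1}) \ne 0$), to verifying the identity
$$ \wt\Li^{n+1}_n(\la, \mu) = \xi_{\la/\mu}(-q^{-1}) \cdot \frac{f(\mu)}{f(\la)}, \qquad \mu \nearrow\!\nearrow \la, $$
separately in the two sub-cases of Definition \ref{def8.A}. The substitution $t = -q^{-1}$ gives $(-q)^{-j} = t^j$, so that the factors $(1 - (-q)^{-m_{k-1}(\mu)})$ appearing in Proposition \ref{Lmula} coincide literally with the factors $(1 - t^{m_{k-1}(\mu)})$ of Theorem \ref{thm8.A}, and in the horizontal sub-case one further uses $(-t)^{m_k(\mu)}(1+t) = q^{-m_k(\mu)-1}(q-1)$ to match the factor $(q-1)$ from Proposition \ref{Lmula}(2). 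What remains is routine bookkeeping of $q$-powers: using $\sum_{j \ge k} m_j(\mu) = \mu'_k$, the identity $\binom{|\la|}{2} - \binom{|\mu|}{2} = 2N+1$, and $n(\la) - n(\mu) = \sum_i \bigl(\binom{\la'_i}{2} - \binom{\mu'_i}{2}\bigr)$, one computes $n(\la) - n(\mu) = 2\mu'_k + 1$ in the vertical-domino sub-case and $n(\la) - n(\mu) = 2\mu'_k - m_k(\mu)$ in the horizontal sub-case. A direct substitution in each sub-case then confirms that $f(\mu)/f(\la)$ matches the ratio of the $q$-power prefactors.

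The only genuine obstacle is this case-by-case arithmetic; everything else is formal and parallels the $GL$-case of Section \ref{sect4} (cf.\ \eqref{eq4.f}, where the identical gauge function already appeared). Conceptually, the appearance of the parameter $-q^{-1}$ (rather than $+q^{-1}$) is forced by the identity $(-q)^{-j} = (-q^{-1})^j$, which is the precise manner in which the Ennola-type sign change enters at the level of edge weights.
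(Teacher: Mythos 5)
Your proposal is correct and follows essentially the same route as the paper: the paper's own proof also chains Lemma \ref{lemma8.B} with the similarity of $\Ga^{U(2\infty,q^2)}$ (resp.\ $\Ga^{U(2\infty+1,q^2)}$) and $\Y^\HL_\e(-q^{-1})$ (resp.\ $\Y^\HL_\o(-q^{-1})$), established via the gauge function $f(\la)=q^{n(\la)-\binom{|\la|}{2}}$ and exactly the comparison of Proposition \ref{Lmula} with Theorem \ref{thm8.A} that you outline (this is the paper's Lemma \ref{lemma8.C}). Your case-by-case bookkeeping, including $n(\la)-n(\mu)=2\mu'_k+1$ and $2\mu'_k-m_k(\mu)$ and the substitution $t=-q^{-1}$, matches the paper's computation.
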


We need a lemma linking the quantities $\wt{\Li}^{n+1}_n(\la, \mu)$ computed in Proposition \ref{Lmula} with the coefficients $\xi_{\la/\mu}(t)$ computed in Theorem \ref{thm8.A}. Recall the function defined in \eqref{eq4.f}:
$$
f(\la)=q^{n(\la)-\binom{|\la|}2}, \quad \la\in\Y,
$$
 
\begin{lemma}\label{lemma8.C}
Let $\mu\in\Y_N$, $\la\in\Y_{N+2}$, where $N=2n$ or $N=2n+1$. If $\mu\nearrow\!\nearrow\la$, then 
\begin{equation}\label{xiN}
\wt{\Li}^{n+1}_n(\la, \mu)= \xi_{\la/\mu}(-q^{-1})\cdot\frac{f(\mu)}{f(\la)} =
\xi_{\la/\mu}(-q^{-1})\cdot q^{n(\mu) - n(\la) + 2N+1}.
\end{equation}
\end{lemma}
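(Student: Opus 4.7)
The proof is a direct comparison of the explicit formulas from Proposition~\ref{Lmula} with those from Theorem~\ref{thm8.A} under the specialization $t=-q^{-1}$. I would first verify the second equality: since $|\mu|=N$ and $|\la|=N+2$, a short computation gives $\binom{N+2}{2}-\binom{N}{2}=2N+1$, hence $f(\mu)/f(\la)=q^{n(\mu)-n(\la)+2N+1}$. So the whole statement reduces to the identity
\[
\wt{L}^{n+1}_n(\la,\mu)=\xi_{\la/\mu}(-q^{-1})\cdot q^{n(\mu)-n(\la)+2N+1},
\]
which I would check case by case according to the shape of $\theta=\la\setminus\mu$.

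The key auxiliary tool is the standard identity $n(\la)=\sum_{i\ge1}\binom{\la'_i}{2}$, together with $\mu'_k=\sum_{j\ge k}m_j(\mu)$. If $\theta$ is a vertical domino in column $k$, then $\la'_k=\mu'_k+2$ and no other column changes, so
\[
n(\la)-n(\mu)=\binom{\mu'_k+2}{2}-\binom{\mu'_k}{2}=2\mu'_k+1=1+2\textstyle\sum_{j\ge k}m_j(\mu).
\]
If $\theta$ has one box in column $k$ and one box in column $k+1$, then $\la'_k=\mu'_k+1$, $\la'_{k+1}=\mu'_{k+1}+1$, and
\[
n(\la)-n(\mu)=\mu'_k+\mu'_{k+1}=2\textstyle\sum_{j\ge k}m_j(\mu)-m_k(\mu).
\]

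With these in hand, the verification is routine algebra. In case (1), substituting $t=-q^{-1}$ in Theorem~\ref{thm8.A}(1) gives $\xi_{\la/\mu}(-q^{-1})=(1-(-q)^{-m_{k-1}(\mu)})(1-(-q)^{1-m_{k-1}(\mu)})$, which already matches the non-$q$-power factor in Proposition~\ref{Lmula}(1); multiplying by $q^{n(\mu)-n(\la)+2N+1}$ and using the first computation of $n(\la)-n(\mu)$ turns the exponent into $2N-2\sum_{j\ge k}m_j(\mu)$, exactly as required. In case (2), using $(-t)^{m_k(\mu)}=q^{-m_k(\mu)}$, $1+t=1-q^{-1}$, and $1-t^{m_{k-1}(\mu)}=1-(-q)^{-m_{k-1}(\mu)}$, one gets $\xi_{\la/\mu}(-q^{-1})=q^{-m_k(\mu)}(1-q^{-1})(1-(-q)^{-m_{k-1}(\mu)})$; after multiplying by $q^{n(\mu)-n(\la)+2N+1}$, the second formula for $n(\la)-n(\mu)$ collapses the exponent to $2N-2\sum_{j\ge k}m_j(\mu)+1$, and then factoring $(q-1)=q(1-q^{-1})$ matches the expression in Proposition~\ref{Lmula}(2). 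The boundary case $k=1$ works by the convention $m_0(\mu)=+\infty$, $(-q)^{-\infty}=0$, so it is automatically covered.

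There is no genuine obstacle here: the content is entirely a careful bookkeeping of signs and exponents. The only point requiring mild attention is the identity $(-q^{-1})^m=(-q)^{-m}$ (which makes the $t$-powers in Theorem~\ref{thm8.A} align perfectly with the $(-q)$-powers in Proposition~\ref{Lmula}), and making sure the two expressions for $n(\la)-n(\mu)$ are the ones predicted by the respective geometry of the skew shape $\theta$.
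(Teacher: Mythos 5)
Your proposal is correct and follows essentially the same route as the paper: a direct case-by-case comparison of Proposition \ref{Lmula} with Theorem \ref{thm8.A} at $t=-q^{-1}$, using precisely the relations $n(\la)-n(\mu)=2\mu'_k+1$ (vertical domino) or $\mu'_k+\mu'_{k+1}=2\mu'_k-m_k(\mu)$ (two consecutive columns) together with $\mu'_k=\sum_{j\ge k}m_j(\mu)$. Your exponent bookkeeping, the identity $(-q^{-1})^m=(-q)^{-m}$, and the treatment of the $k=1$ boundary case via the convention $m_0(\mu)=+\infty$ are all accurate.
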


\begin{proof}
This follows from a comparison between the formulas of Proposition \ref{Lmula} and those of Theorem \ref{thm8.A}. There are two variants that we denoted as (1) and (2). We use the relations
$$
n(\la)-n(\mu)=\begin{cases} 
2\mu'_k+1 & \text{in the case $(1)$}\\
\mu'_k+\mu'_{k+1}=2\mu'_k-m_k(\mu) & \text{in the case $(2)$}
\end{cases}
$$
and
$$
\sum_{j\ge k}m_j(\mu)=\mu'_k.
$$
\end{proof}

\begin{proof}[Proof of Theorem \ref{Ustructure}]
(i) This follows from the chain of bijections
$$
P\leftrightarrow \vp^\Ga \leftrightarrow \vp,
$$
where the first bijection is given by Lemma \ref{lemma8.B} and the second bijection is given by Lemma \ref{lemma8.C}, which establishes the similarity of the branching graphs with edge weights $\wt L^{n+1}_n(\la,\mu)$ and $\xi_{\la/\mu}(-q^{-1})$. 

(ii) This claim is a consequence of (i). 
\end{proof}

It is interesting to compare  Proposition \ref{HLgraph} (for the branching graph $\Y^{\HL}(t)$ related to $GL(\infty, q)$) and Theorem \ref{Ustructure} (for the branching graphs $\Y^{\HL}_{\e}(t)$, $\Y^{\HL}_{\o}(t)$ related to $U(2\infty, q^2)$, $U(2\infty+1, q^2)$).
The formulas specifying the link between invariant measures and harmonic functions on HL-deformed graphs look identical, with the main difference being that in the former case, the HL parameter $t$ is specialized to $q^{-1}$, while in the latter case, it is specialized to $-q^{-1}$.

It is known that the sign flip $q \leftrightarrow -q$ arises in the representation theory of the finite unitary groups. Namely,  the images of the irreducible characters of $GL(n, q)$ and $U(n, q^2)$, under appropriate characteristic maps, coincide after the sign flip $q \leftrightarrow -q$; see \cite{E1}, \cite{E2}, \cite{TV}. This phenomenon is called \emph{Ennola's duality}. 
Our results suggest that a version of Ennola's duality might exist in our infinite-dimensional setting.

It is an open problem to describe explicitly the set of nonnegative harmonic functions on the graphs $\Y^\HL_\e(t)$ and $\Y^\HL_\o(t)$ with negative $t\in(-1,0)$, as it was done in Proposition \ref{prop4.B} for the graph $\Y^\HL(t)$ with positive $t\in(0,1)$ (Problem \ref{problem1.B} from the introduction). 

\begin{remark}
For the HL-deformed graph $\Y^{\HL}(t)$ (in fact, for more general branching graphs), Kerov was able to obtain a list of extreme nonnegative harmonic functions, and conjectured the completeness of this list. A special case of his construction is described in detail in \cite[Sect. 4]{GO}. The conjecture was proved much later by Matveev \cite{Mat}; in the case of our interest, the result is stated in Proposition \ref{prop4.B}.
Kerov's construction is based on the coalgebra structure of the ring of symmetric functions.
It is unclear whether his method can be adapted to the graphs $\Y^\HL_\e(t)$ and $\Y^\HL_\o(t)$.
\end{remark}

\subsection{Examples}

The problem just stated can be reformulated as follows (cf. Remark \ref{rem4.A}). Consider the decomposition 
$\Sym=\Sym^\e\oplus \Sym^\o$, where $\Sym^\e$ and $\Sym^\o$ are the linear subspaces spanned by the homogeneous elements of even and odd degree, respectively. Next, for $t\in(-1,0)$, let $C^\HL_\e(t)\subset \Sym^\e$ and $C^\HL_\o(t)\subset\Sym^\o$ be the convex cones spanned by the functions $\wt Q_\la(;t)$, where $|\la|$ is assumed to be even or odd, respectively.
Then we are interested in linear functionals  $\Phi$ on $\Sym^\e$ or $\Sym^\o$, subject to the following conditions:
\begin{gather}
\text{$\Phi(p_2 F)=\Phi(F)$ for any $F\in\Sym^\e$ or $F\in\Sym^\o$ ($p_2$-harmonicity);} \label{eq8.C} \\
\text{$\Phi$ is nonnegative on $C^\HL_\e(t)$ or $C^\HL_\o(t)$, respectively (positivity)}. \label{eq8.D}
\end{gather}

These  functionals form convex cones which are in a natural bijective correspondence with the cones of nonnegative harmonic functions. Adding the extra normalization condition $\Phi(1)=1$ if $\Phi:\Sym^\e\to\R$, or $\Phi(p_1)=1$ if $\Phi:\Sym^\o\to\R$, we obtain bases of the cones. 

In this subsection, we construct examples of such functionals for a general value of $t\in(-1, 0)$.  For $t=-q^{-1}$, the functionals give rise to invariant measures, via the relation
\begin{equation}\label{measures_spec}
P(\Cyl_N(X)) = \frac{q^{n(\la) - \frac{N(N-1)}{2}}}{(1 - q^{-2})^{\frac{N}{2}}} \cdot \Phi( \wt{Q}_{\la}(; -q^{-1}) ),
\end{equation}
where $X\in\uu(N, q^2)$ is any nilpotent matrix of Jordan type $\la$.

\subsubsection{Plancherel-type functionals}\label{sec:plancherel}

Consider the basis $\{p_\rho: \rho\in\Y\}$ in $\Sym$ formed by the products of power-sums. We define the functionals $\Phi^\Planch_\e:\Sym^\e\to\R$ and $\Phi^\Planch_\o:\Sym^\e\to\R$ as follows (below $n=0,1,2,\dots$):
\begin{equation*}
\Phi^\Planch_\e(p_\rho)=\begin{cases} 1, & \rho=(2^n), \\
0, & \text{otherwise}; 
\end{cases}
\qquad
\Phi^\Planch_\o(p_\rho)=\begin{cases} 1, & \rho=(2^n, 1), \\
0, & \text{otherwise}.
\end{cases}
\end{equation*}

\begin{proposition}
These functionals satisfy the conditions \eqref{eq8.C}-- \eqref{eq8.D}.
\end{proposition}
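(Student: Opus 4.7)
The plan is to check the harmonicity condition \eqref{eq8.C} and the positivity condition \eqref{eq8.D} separately. Harmonicity is a one-line verification on the basis $\{p_\rho\}$: since $p_2 \cdot p_\rho = p_{\rho \cup (2)}$, the equation $\Phi^\Planch_\bullet(p_2 p_\rho) = \Phi^\Planch_\bullet(p_\rho)$ reduces to the combinatorial tautology ``$\rho \cup (2) = (2^n)$ iff $\rho = (2^{n-1})$'' in the even case, and the analogous statement with $(2^n, 1)$ in place of $(2^n)$ in the odd case.

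For positivity, my plan is to produce manifestly non-negative expressions for $\Phi^\Planch_\bullet(\wt Q_\la(;t))$. The starting point is the Pieri rule from Theorem \ref{thm8.A}, which can be rearranged as
\[
p_2 \cdot \wt Q_\mu(;t) \;=\; \sum_{\mu \nearrow\!\nearrow \la}\frac{\xi_{\la/\mu}(t)}{1-t^2}\, \wt Q_\la(;t).
\]
Iterating this formula $n$ times, starting from the two ``roots'' $\wt Q_\emptyset = 1$ (even graph) and $\wt Q_{(1)} = (1-t)\,p_1$ (odd graph), a straightforward induction yields
\begin{align*}
p_2^n &\;=\; \sum_{|\la|=2n} \frac{D_\e(\la;t)}{(1-t^2)^n}\, \wt Q_\la(;t),\\
p_2^n p_1 &\;=\; \frac{1}{1-t}\sum_{|\la|=2n+1} \frac{D_\o(\la;t)}{(1-t^2)^n}\, \wt Q_\la(;t),
\end{align*}
where $D_\bullet(\la;t)$ denotes the sum, over oriented paths from the root of $\Y^\HL_\bullet(t)$ to $\la$, of the products of edge weights $\xi_{\bullet/\bullet}(t)$. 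By Corollary \ref{cor8.A} and the reachability established in the proof of Lemma \ref{lemma8.A}, these quantities are strictly positive for $t\in(-1,0)$.

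To pass from these expansions to the values of the functionals, I would employ the standard Hall--Littlewood inner product on $\Sym$ (see \cite[Ch.~III, \S4]{Mac}), under which $\{P_\la\}$ and $\{Q_\mu\}$ are dual bases and $\langle p_\rho, p_\sigma\rangle_t = \delta_{\rho\sigma}\, z_\rho \prod_i (1-t^{\rho_i})^{-1}$. Since by definition $\Phi^\Planch_\e(F)$ (resp.\ $\Phi^\Planch_\o(F)$) is the coefficient of $p_{(2^n)}$ (resp.\ $p_{(2^n, 1)}$) in the power-sum expansion of a homogeneous element $F$, it may be written as $\langle p_{(2^n)}, F\rangle_t / \langle p_{(2^n)}, p_{(2^n)}\rangle_t$ and analogously in the odd case. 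Combining with $\langle \wt Q_\mu, \wt Q_\la\rangle_t = b_\la(t)\, \delta_{\mu\la}$, which is immediate from $\wt Q_\la = (-1)^{n(\la)}\, b_\la(t)\, P_\la$, the various $(1-t^2)$ and $(1-t)$ factors cancel cleanly and produce
\[
\Phi^\Planch_\e\bigl(\wt Q_\la(;t)\bigr) \;=\; \frac{b_\la(t)\, D_\e(\la;t)}{2^n\, n!},\qquad
\Phi^\Planch_\o\bigl(\wt Q_\la(;t)\bigr) \;=\; \frac{b_\la(t)\, D_\o(\la;t)}{2^n\, n!}.
\]
Both are manifestly non-negative for $t\in(-1,0)$, since each factor $1-t^j$ in $b_\la(t) = \prod_i\prod_{j=1}^{m_i(\la)}(1-t^j)$ from \eqref{b_def} is positive on $(-1,1)$. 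I do not foresee any substantial obstacle; the one point requiring care is correctly tracking the cancellation of the $(1-t^2)$ and $(1-t)$ normalizations, after which positivity drops out immediately from Corollary \ref{cor8.A}.
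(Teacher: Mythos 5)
Your proposal is correct and follows essentially the same route as the paper: harmonicity is read off from the definition on the power-sum basis, and positivity is obtained by writing $\Phi^\Planch_\bullet(\wt Q_\la(;t))$ as an HL inner product against $p_2^n$ (resp.\ $p_2^n p_1$), expanding $p_2^n$ in the $\wt Q_\nu$ basis so that the coefficient becomes a sum of path weights in $\Y^\HL_\bullet(t)$, which is strictly positive by Corollary \ref{cor8.A} and Lemma \ref{lemma8.A}, and finally using $b_\la(t)>0$ for $t\in(-1,0)$. Your explicit treatment of the odd case (with root $\wt Q_{(1)}=(1-t)p_1$ and the extra $(1-t)$ cancellation) matches what the paper leaves as ``similar.''
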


\begin{proof}

The $p_2$-harmonicity property is clear from the very definition. Let us check the positivity. In fact we will prove a stronger claim:
\begin{equation}\label{eq8.B}
\Phi^\Planch_\e(\wt Q_\la(;t))>0, \quad \la\in\Y_{2n}; \qquad \Phi^\Planch_\o(\wt Q_\la(;t))>0, \quad \la\in\Y_{2n+1}.
\end{equation}

Below $(\ccdot,\ccdot)$ is the scalar product in $\Sym$ depending on the HL parameter $t$, see \cite[Ch. III, Sect. 4]{Mac}. We use the fact that both $\{Q_\la(;t): \la\in\Y\}$ and $\{p_\rho: \rho\in\Y\}$ are orthogonal bases. 

Examine the even case. For $\la\in\Y_{2n}$, we have 
$$
\Phi^\Planch_\e(\wt Q_\la(;t))=\frac{(\wt Q_\la(;t), p_2^n)}{(p_2^n, p_2^n)}.
$$
By \cite[Ch. III, (4.11)]{Mac},
$$
(p_2^n,p_2^n)=(1-t^2)^{-n}\cdot 2^n n!>0.
$$
Next, let $\xi_{\la/\varnothing}$ stand for the coefficient of $\wt Q_\la(;t)$ in the expansion 
$$
\left((1-t^2)p_2\right)^n=\sum_{\nu\in\Y_{2n}}\xi_{\nu/\varnothing}(t) \wt Q_\nu(;t). 
$$
Then we have
$$
(\wt Q_\la(;t), p_2^n)=\frac{\xi_{\la/\varnothing}(t)b_{\la}(t)}{(1 - t^2)^n}, \qquad b_\la(t):=(Q_\la(;t), Q_\la(;t))=(\wt Q_\la(;t), \wt Q_\la(;t)).
$$

A formula for the quantity $b_{\la}(t)$ was previously displayed in \eqref{b_def}; it clearly shows that $b_\la(t)>0$, if $t\in (-1, 0)$. It remains to prove that $\xi_{\la/\varnothing}(t)>0$. But this coefficient equals the sum of the weights of all paths $\varnothing\nearrow\!\nearrow\dots\nearrow\!\nearrow\la$ joining $\la$ to the root $\varnothing$ in the graph $\Y^\HL_\e$, where the weight of a path is equal to the product of its edge weights. By Lemma \ref{lemma8.A}, the set of these paths is nonempty, which entails the desired inequality.

In the odd case the argument is similar. In this case the root of $\Y^\HL_\o$ is the one-box diagram.
\end{proof}

\subsubsection{Functionals connected with the principal specialization} 
Consider the specializations $\Sym\to\R$ defined by
\begin{equation}\label{eq8.E2}
\Phi_m(F):=F(c_m,c_m t,\dots,c_mt^{m-1}), \ m=1,2,\dots; \quad \Phi_\infty(F):=F(c_\infty,c_\infty t,c_\infty t^2,\dots),
\end{equation}
where the positive constants $c_1,c_2,\dots, c_\infty$ are:
$$
c_m = \sqrt{\dfrac{1-t^2}{1-t^{2m}}}, \qquad c_\infty = \sqrt{1-t^2}.
$$
We are interested in the restrictions of $\Phi_m$ and $\Phi_{\infty}$ to $\Sym^\e$ and to $\Sym^\o$.

\begin{proposition}
The functionals $\Phi_m$, $\Phi_\infty$ satisfy the conditions \eqref{eq8.C}-- \eqref{eq8.D},  both for the even and odd cases.
\end{proposition}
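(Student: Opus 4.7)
The plan is to verify the two conditions \eqref{eq8.C} and \eqref{eq8.D} separately, exploiting the fact that both $\Phi_m$ and $\Phi_\infty$ are algebra homomorphisms $\Sym\to\R$, being defined by substitution of specific numerical values into the variables. The $p_2$-harmonicity condition $\Phi(p_2F)=\Phi(F)$ therefore reduces to the single equation $\Phi(p_2)=1$, which I would verify by direct evaluation:
\[
\Phi_m(p_2)=c_m^2\sum_{i=0}^{m-1}t^{2i}=c_m^2\cdot\frac{1-t^{2m}}{1-t^2}=1,\qquad \Phi_\infty(p_2)=\frac{c_\infty^2}{1-t^2}=1,
\]
the chosen normalizations of $c_m$ and $c_\infty$ being precisely what makes this work. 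Multiplicativity then gives $\Phi(p_2F)=\Phi(p_2)\Phi(F)=\Phi(F)$, i.e.\ \eqref{eq8.C}.

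For the positivity condition \eqref{eq8.D}, the cornerstone will be the explicit principal-specialization identity
\[
Q_\la(1,t,\ldots,t^{m-1};t)=t^{n(\la)}\prod_{i=1}^{\ell(\la)}\bigl(1-t^{m-i+1}\bigr),\qquad \ell(\la)\le m,
\]
(with $Q_\la$ vanishing at this specialization when $\ell(\la)>m$). I would extract it from Macdonald's $t$-binomial formula for $P_\la(1,t,\ldots,t^{m-1};t)$ in \cite[Ch.~III, \S2]{Mac}, multiplying by $b_\la(t)=\prod_i(t;t)_{m_i(\la)}$ and observing that the resulting product telescopes (since $m_i(\la)=\la_i'-\la_{i+1}'$). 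As a sanity check, letting $m\to\infty$ recovers the identity $Q_\la(1,t,t^2,\ldots;t)=t^{n(\la)}$ already recorded in Example \ref{ex4.Haar}.

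Plugging this in gives, after accounting for the sign twist $\wt Q_\la=(-1)^{n(\la)}Q_\la$,
\[
\Phi_m(\wt Q_\la(;t))=c_m^{|\la|}(-t)^{n(\la)}\prod_{i=1}^{\ell(\la)}\bigl(1-t^{m-i+1}\bigr),\qquad \Phi_\infty(\wt Q_\la(;t))=c_\infty^{|\la|}(-t)^{n(\la)}.
\]
For $t\in(-1,0)$ every factor on the right is strictly positive: $c_m,c_\infty>0$ by construction, $-t>0$, and each $(1-t^k)$ with $k\ge1$ is positive because $|t^k|<1$. Thus $\Phi_m(\wt Q_\la(;t))\ge0$ (with equality iff $\ell(\la)>m$) and $\Phi_\infty(\wt Q_\la(;t))>0$, giving \eqref{eq8.D} in both the even and odd cases. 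The mechanism $(-1)^{n(\la)}\,t^{n(\la)}=(-t)^{n(\la)}$ is precisely the Ennola-type sign cancellation emphasized after Theorem \ref{Ustructure}.

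The main (quite minor) obstacle is pinning down the principal-specialization formula in the correct form and carrying out the telescoping; once that is in hand, the verification of both \eqref{eq8.C} and \eqref{eq8.D} is routine bookkeeping. I would do this derivation carefully, separately tracking the parity of $|\la|$ so that the same argument handles the restrictions of $\Phi_m$ and $\Phi_\infty$ to both $\Sym^{\e}$ and $\Sym^{\o}$ with no modification.
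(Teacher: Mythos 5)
Your proposal is correct and follows essentially the same route as the paper: harmonicity is reduced to $\Phi(p_2)=1$ via multiplicativity of the specializations, and positivity comes from the principal-specialization formula for $Q_\la(1,t,\dots,t^{m-1};t)$ (which the paper simply cites from Macdonald) together with the observation that $(-1)^{n(\la)}t^{n(\la)}>0$ for $t\in(-1,0)$. The only difference is that you propose to rederive the specialization identity rather than quote it, which is a routine addition.
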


\begin{proof}
The $p_2$-harmonicity property follows from our choices of constants $c_m$ and $c_\infty$, which imply that $\Phi_m(p_2) = \Phi_\infty(p_2) = 1$. The positivity property follows from the fact that (see \cite[Ch. III, Sect. 4, ex. 3]{Mac})
$$
\Phi_m(Q_\la(;t)) = \begin{cases} c_m^{|\la|} t^{n(\la)} \prod\limits_{i=1}^{\ell(\la)}(1-t^{m-i+1}), & \ell(\la)\le m,\\
0, & \ell(\la)>m;
\end{cases}
\qquad
\Phi_\infty(Q_\la(;t)) = c^{|\la|}_{\infty} t^{n(\la)}.
$$
Since $(-1)^{n(\la)}t^{n(\la)}>0$ for negative $t$, we obtain that $\Phi_m(\wt Q_\la(;t))$ is strictly positive for $\ell(\la)\le m$ and is vanishing otherwise, whereas $\Phi_\infty(\wt Q_\la(;t))>0$ for all $\la$. 
\end{proof}

\begin{remark}
The invariant measure $P$ corresponding to $\Phi_\infty$ is the restriction of the (properly normalized) Haar measure to the subset of pronilpotent matrices, both in even and odd cases (cf. Example \ref{ex4.Haar}).
In fact, the corresponding measure $P$ is determined by the relations (see \eqref{measures_spec}):
$$
P(\Cyl_N(X)) = q^{\frac{-N(N-1)}{2}},\quad X\in\uu(N, q^2),\quad N = 0, 1, \cdots.
$$
This relation implies that $P$ is the restriction of a Haar measure because $P(\Cyl_N(X))$ does not depend on the Jordan type of $X$.
\end{remark}

\subsubsection{One more family of functionals}
The functional $\Phi_2$ from \eqref{eq8.E2} is the specialization  $F\mapsto F(\frac1{\sqrt{1+t^2}}, \frac{t}{\sqrt{1+t^2}})$. The following functionals form a one-parameter deformation of this specialization.  

Let $a_1, a_2$ be real numbers such that $a_1>0>a_2$, $a_1^2+a_2^2=1$, and $a_1>|a_2|$. Let $\Phi_{a_1,a_2}: \Sym\to\R$ be the specialization $F\mapsto F(a_1,a_2)$. 

\begin{proposition}
The functionals $\Phi_{a_1,a_2}$ satisfy the conditions \eqref{eq8.C}-- \eqref{eq8.D},  both for the even and odd cases.
\end{proposition}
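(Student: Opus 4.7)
The $p_2$-harmonicity is immediate from the constraint $a_1^2+a_2^2=1$, which says precisely that $p_2(a_1,a_2)=1$; therefore
\[
\Phi_{a_1,a_2}(p_2F)=p_2(a_1,a_2)\cdot F(a_1,a_2)=\Phi_{a_1,a_2}(F),\qquad F\in\Sym,
\]
which restricts to $\Sym^\e$ and $\Sym^\o$ separately.

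For the positivity condition, note first that $P_\la(x_1,x_2;t)\equiv 0$ whenever $\ell(\la)>2$, so $\wt Q_\la(a_1,a_2;t)=0$ in that range, and I need only verify $\wt Q_\la(a_1,a_2;t)\ge 0$ for $\la=(r,s)$ with $r\ge s\ge 0$. For this I plan to use the standard two-variable formula
\[
P_{(r,s)}(x_1,x_2;t)=(x_1x_2)^{s}\cdot\frac{x_1^{m}(x_1-tx_2)-x_2^{m}(x_2-tx_1)}{x_1-x_2},\qquad m:=r-s\ge 1,
\]
together with $P_{(r,r)}(x_1,x_2;t)=(x_1x_2)^{r}$ and $\wt Q_\la=(-1)^{n(\la)}b_\la(t)P_\la$ (with $b_{(r)}(t)=1-t$, $b_{(r,s)}(t)=(1-t)^{2}$ for $r>s\ge 1$, and $b_{(r,r)}(t)=(1-t)(1-t^{2})$).

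Set $a:=a_1$, $b:=-a_2$, $\tau:=-t$; the hypotheses read $a>b>0$ and $\tau\in(0,1)$. Direct substitution and simplification then yield, for $r>s\ge 0$,
\[
\wt Q_{(r,s)}(a_1,a_2;t)=(1+\tau)^{\eps_s}(ab)^{s}\cdot\frac{\Delta_{m}(a,b,\tau)}{a+b},\qquad \Delta_{m}(a,b,\tau):=a^{m}(a-\tau b)-(-1)^{m}b^{m}(\tau a-b),
\]
where $\eps_s=2$ for $s\ge 1$ and $\eps_s=1$ for $s=0$; in the rectangular case the same substitution gives $\wt Q_{(r,r)}(a_1,a_2;t)=(1+\tau)(1-\tau^{2})(ab)^{r}>0$. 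Since $(1+\tau)^{\eps_s}$, $(ab)^{s}$ and $a+b$ are all positive, positivity of $\Phi_{a_1,a_2}$ on $\wt Q_\la$ reduces to the single elementary claim that $\Delta_{m}(a,b,\tau)\ge 0$ for all integers $m\ge 1$ whenever $a>b>0$ and $\tau\in(0,1)$.

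Write $\Delta_{m}=L-\tau M$, where $L:=a^{m+1}+(-1)^{m}b^{m+1}$ and $M:=ab\bigl(a^{m-1}+(-1)^{m}b^{m-1}\bigr)$. Then $L>0$ (for even $m$ this is clear; for odd $m$ it reduces to $a^{m+1}-b^{m+1}>0$, which holds because $a>b>0$), and a direct expansion gives the factorization
\[
L-M=(a-b)\bigl(a^{m}-(-1)^{m}b^{m}\bigr)\ge 0.
\]
Writing $\Delta_{m}=(1-\tau)L+\tau(L-M)$ then displays $\Delta_m$ as a nonnegative combination of $L$ and $L-M$, and completes the argument. The only real task in the proof is the bookkeeping required to reduce the several sub-cases (one-row, two-row non-rectangular, rectangular; even vs.\ odd $m$) to the unified inequality $\Delta_{m}\ge 0$; once that reduction is in hand, the remaining step is the two-line factorization above.
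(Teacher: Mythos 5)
Your proposal is correct and follows essentially the same route as the paper: harmonicity from $a_1^2+a_2^2=1$, vanishing of $\wt Q_\la(a_1,a_2;t)$ for $\ell(\la)>2$, reduction of the two-row case to a factor $(a_1a_2)^s$ times the one-row expression, and then positivity of the very same quantity $a_1^{m}(a_1-ta_2)-a_2^{m}(a_2-ta_1)$ that the paper calls \eqref{easy_ineq}. The only difference is cosmetic: you prove that inequality by the decomposition $\Delta_m=(1-\tau)L+\tau(L-M)$ with $L-M=(a-b)\bigl(a^{m}-(-1)^{m}b^{m}\bigr)$, while the paper normalizes $a_2=-1$ and compares terms directly.
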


\begin{proof}
The $p_2$-harmonicity property holds, because $\Phi_{a_1,a_2}(p_2)=a_1^2+a_2^2=1$. Next, we have $\Phi_{a_1,a_2}(\wt Q_\la(;t))=0$ if $\ell(\la)>2$. We are going to show that  $\Phi_{a_1, a_2}(\wt{Q}_{\la}(;t))\ge0$ for all $\la$ with $\ell(\la)\le2$.

This is clear for $\la=\varnothing$. Suppose $\ell(\la)=1$, so that $\la=(m)$ with $m=1,2,\dots$. Then $n(\la)=0$, so that we have to check that $Q_{(m)}(a_1,a_2;t)>0$. From \cite[Ch. III, (2.9)]{Mac},
\begin{equation}\label{Q_two}
Q_{(m)}(a_1, a_2; t) = (1-t)\cdot\left( a_1^{m}\cdot\frac{a_1 -ta_2}{a_1 - a_2} + a_2^{m}\cdot\frac{a_2-ta_1}{a_2 - a_1} \right)
\end{equation}
Since $1-t>0$ and $a_1-a_2>0$, the desired inequality reduces to  
\begin{equation}\label{easy_ineq}
a_1^{m}(a_1 -ta_2) - a_2^{m}(a_2 -t a_1) \stackrel{?}{>} 0.
\end{equation}

Since the left hand side of \eqref{easy_ineq} is homogeneous on $a_1, a_2$, we can assume that $a_2 = -1$ and $a_1=a>1$. 
Then the inequality turns into
$$
a^{m+1}+(-1)^m\stackrel{?}{>} -t(a^m+(-1)^m a).
$$
Since $t\in(-1,0)$ and $a>1$, it suffices to prove that
$$
a^{m+1}+(-1)^m \stackrel{?}{\ge} a^m+(-1)^m a,
$$
but this follows from the evident inequality
$$
a^m(a-1)\ge (-1)^m (a-1).
$$

It remains to examine the case $\ell(\la) = 2$. Then $\la = (m,n)$ for some integers $m\ge n\ge1$. Observe that  $n(\la)=n$; using this we obtain
\begin{multline*}
\wt Q_{(m,n)}(a_1,a_2;t)=(-1)^n Q_{(m,n)}(a_1,a_2;t)=(-1)^n(a_1a_2)^n Q_{(m-n)}(a_1,a_2;t)\\
=|a_1a_2|^nQ_{(m-n)}(a_1,a_2;t)>0,
\end{multline*}
where the last inequality holds by the case already considered.
\end{proof}

\section{Proof of Propositions \ref{prop_borodin} and \ref{Lmula}}\label{sect9}

\subsection{Two lemmas}\label{sect9.1}

Let $V$ be a finite-dimensional vector space. Given an operator $A$ on $V$, we denote by $\Ran A$ its range and set $\rk A:=\dim(\Ran A)$. 

The following lemma shows how to find the Jordan type of a nilpotent operator.

\begin{lemma}\label{lemma9.A}
Let $X$ be a nilpotent operator on $V$,  $\mu\in\Y$ be its Jordan type, and $\mu'$ be the transposed diagram. Then
$$
\rk X^{i-1} - \rk X^i = \mu'_i,\text{ for all }i \geq 1.
$$
As a consequence,
$$
\rk X^k = \sum_{j \geq k+1}{\mu'_j},\text{ for all }k \geq 0.
$$
\end{lemma}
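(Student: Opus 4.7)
My plan is to reduce to the Jordan canonical form and then read off both formulas by a direct count.

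First I would note that the quantities $\rk X^k$ are invariants of the conjugacy class of $X$, so I can replace $X$ by a matrix in Jordan form. By the definition of the Jordan type $\mu$ recalled in Subsection \ref{sect2.3}, this means we may assume $V = \bigoplus_{j=1}^{\ell(\mu)} V_j$, where each $V_j$ is $X$-invariant and $X|_{V_j}$ is a single nilpotent Jordan block of size $\mu_j$. Since rank is additive on direct sums, this reduces the problem to the case of a single Jordan block.

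For a single nilpotent Jordan block $J$ of size $m$, the operator $J^k$ shifts the standard basis $k$ places (killing the first $k$ vectors), so $\rk J^k = \max(m - k, 0)$. Summing over blocks gives
\begin{equation*}
\rk X^k = \sum_{j \geq 1} \max(\mu_j - k, 0), \qquad k \geq 0.
\end{equation*}

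Taking the telescoping difference,
\begin{equation*}
\rk X^{i-1} - \rk X^i = \sum_{j \geq 1} \bigl( \max(\mu_j - i + 1, 0) - \max(\mu_j - i, 0) \bigr) = \#\{ j \geq 1 \mid \mu_j \geq i \} = \mu'_i,
\end{equation*}
which is the first claim. For the second, since $X$ is nilpotent we have $\rk X^N = 0$ for $N$ large enough, so telescoping yields
\begin{equation*}
\rk X^k = \sum_{i \geq k+1} (\rk X^{i-1} - \rk X^i) = \sum_{j \geq k+1} \mu'_j.
\end{equation*}

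There is no real obstacle here: the entire argument is a standard computation with Jordan blocks, and the only mild subtlety is keeping the indices straight between $\mu$ and $\mu'$.
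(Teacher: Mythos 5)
Your proof is correct and follows the same route as the paper, which treats this as an easy exercise based on the fact that a single nilpotent Jordan block $A$ of size $p$ satisfies $\rk A^k=\max(p-k,0)$ (citing Macdonald). Your write-up simply fills in the reduction to Jordan form, the additivity of rank over blocks, and the telescoping, all of which are exactly the intended details.
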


\begin{proof}
Easy exercise using the fact that if $A$ is a $p\times p$ nilpotent matrix of Jordan type $(p)$, then $\rk A^k=\max(p-k,0)$ (cf. Macdonald \cite[Ch. II, (1.4)]{Mac}). 
\end{proof}

Let $X$ be as in Lemma \ref{lemma9.A}. We assign to it the nested sequence of subspaces 
$$
\{0\}=V_0\subseteq V_1\subseteq V_2\subseteq V_3\subseteq\dots,
$$
where 
\begin{equation}\label{eq9.D}
V_k := \{ v\in V : X^{k-1}v \in\Ran X^k \},  \quad k=1,2,\dots
\end{equation}
(the fact that  $V_k\subseteq V_{k+1}$ follows directly from the very definition). 

\begin{lemma}\label{lemma9.B}
Let $X$ be as above and $\mu$ be its Jordan type. We have
$$
\dim V_k = \dim V - \sum_{j \geq k}{m_j}, \quad k\ge0,
$$
where, as before, $m_j=m_j(\mu):=\#\{i: \mu_i=j\}$.
\end{lemma}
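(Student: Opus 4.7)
The plan is to realize each $V_k$ as a preimage of a subspace under the linear map $X^{k-1}\colon V\to V$, namely $V_k = (X^{k-1})^{-1}(\Ran X^k)$, and then invoke the standard dimension formula for preimages. Recall that for any linear map $f\colon A\to B$ and subspace $C\subseteq B$, one has $\dim f^{-1}(C) = \dim\Ker f + \dim(\Ran f \cap C)$, since $f$ restricts to a surjection $f^{-1}(C)\twoheadrightarrow \Ran f\cap C$ with kernel $\Ker f$.

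The first key observation I would record is that $\Ran X^k = X^{k-1}(\Ran X)\subseteq \Ran X^{k-1}$, so the intersection $\Ran X^{k-1}\cap \Ran X^k$ collapses to $\Ran X^k$ itself. Applying the preimage formula with $f=X^{k-1}$ and $C=\Ran X^k$ then yields
\[
\dim V_k = \dim\Ker X^{k-1} + \rk X^k.
\]
Using rank-nullity to rewrite $\dim\Ker X^{k-1}=\dim V-\rk X^{k-1}$, this simplifies to
\[
\dim V_k = \dim V - \bigl(\rk X^{k-1} - \rk X^k\bigr).
\]

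Finally I would close by invoking Lemma~\ref{lemma9.A}, which identifies the difference $\rk X^{k-1}-\rk X^k$ with $\mu'_k$, together with the defining identity of the transposed partition,
\[
\mu'_k=\#\{i: \mu_i\ge k\}=\sum_{j\ge k} m_j(\mu),
\]
to obtain $\dim V_k = \dim V - \sum_{j\ge k}m_j$, as claimed. The case $k=0$ (where $V_0=\{0\}$ by convention) can be handled separately, with the natural interpretation of the right-hand side as $\dim V-|\mu|=0$ under the convention $m_0 = \dim V-\ell(\mu)$ (so that $\sum_{j\ge0}m_j=|\mu|$), or simply regarded as a separate base case.

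I do not anticipate any real obstacle: the argument is a short linear-algebra exercise whose content lies almost entirely in noting the inclusion $\Ran X^k\subseteq\Ran X^{k-1}$ and appealing to Lemma~\ref{lemma9.A}. The only point to exercise mild care about is the edge case $k=0$ noted above.
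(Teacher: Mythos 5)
Your argument is correct, but it takes a different route from the paper. You identify $V_k=(X^{k-1})^{-1}(\Ran X^k)$ and compute its dimension abstractly: the preimage formula gives $\dim V_k=\dim\Ker X^{k-1}+\dim(\Ran X^{k-1}\cap\Ran X^k)=\dim\Ker X^{k-1}+\rk X^k$ (using $\Ran X^k\subseteq\Ran X^{k-1}$), and then rank--nullity together with Lemma \ref{lemma9.A} and the identity $\mu'_k=\sum_{j\ge k}m_j$ finishes the proof. The paper instead argues concretely: it first treats a single Jordan block $\mu=(p)$ and then, for general $\mu$, exhibits $V_k$ explicitly as the span of the Jordan-basis vectors $v_{ij}$ with either $\mu_i\le k-1$, or $\mu_i\ge k$ and $j\le\mu_i-1$, and counts boxes. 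Your coordinate-free derivation is shorter and makes transparent that the lemma is just Lemma \ref{lemma9.A} in disguise; the paper's explicit basis description of $V_k$ has the side benefit of providing the concrete model that is reused in the proof of Lemma \ref{lemma9.C}. Your treatment of $k=0$ is a harmless convention issue (the paper only ever uses $k\ge1$, and note that your ad hoc choice $m_0=\dim V-\ell(\mu)$ differs from the convention $m_0(\mu)=+\infty$ used elsewhere in the paper, so it is cleaner to treat $k=0$ simply as the trivial base case $V_0=\{0\}$).
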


In particular, we see that $V_k=V$ for $k\ge\mu_1+1$.

\begin{proof}
Examine first the special case when $\mu=(p)$, where $p:=\dim V$. Pick a basis $v_1,\dots,v_p$ of $V$ such that $X v_i=v_{i-1}$ for $i=1,\dots,p$, where $v_0:=0$. Then 
$$
V_k=\begin{cases} \operatorname{span}\{v_1,\dots,v_{p-1}\}, & 1\le k\le p, \\
V, & k\ge p+1.
\end{cases}
$$

More generally, if $\mu$ is arbitrary, we pick a basis $\{v_{ij}\}$ indexed by the boxes $(i,j)\in\mu$ such that $X v_{ij}=v_{i,j-1}$ with the understanding that $v_{i0}:=0$ (see the proof of Proposition \ref{prop6.A}). Then $V_k$ is the linear span of the basis vectors $v_{ij}$ such that $(i,j)$ satisfies one of the following two conditions:

\smallskip
$\bullet$ $\mu_i\le k-1$, or

\smallskip
$\bullet$ $\mu_i\ge k$ and $1\le j\le \mu_i-1$.

\smallskip

The number of such pairs $(i,j)$ equals $|\mu|-\sum_{j \geq k}{m_j}$, which completes the proof. 
\end{proof}

\subsection{Proof of Proposition \ref{prop_borodin}}

We set $V=\F^n$ and fix a nilpotent operator $X$ on $V$, which we identify with the corresponding $n\times n$ matrix. Let  $\mu\in\Y_n$ denote the Jordan type of $X$. Next, given $x\in V$, we consider the matrix 
\begin{equation*}
Y := \begin{bmatrix} X & x\\ 0 & 0 \end{bmatrix}
\end{equation*}
and denote by $\la$ its Jordan type. Our task is  to compute the number of vectors $x\in V$ leading to a given $\la\in\Y_{n+1}$; let us denote this quantity by $L(\la,\mu)$ (instead of the more detailed notation $L^{n+1}_n(\la,\mu)$, as in the original formulation of the proposition). 

Set
\begin{equation*}
\eps_i:=\rk Y^i -\rk X^i, \quad i=0,1,2,\dots\,.
\end{equation*}
By Lemma \ref{lemma9.A},
\begin{equation}\label{eq9.A}
\la'_i-\mu'_i=\eps_{i-1}-\eps_i, \quad i=1,2,3,\dots\,.
\end{equation}

We have $\eps_0=1$, because $Y^0$ and $X^0$ are the identity matrices of size $n+1$ and $n$, respectively. Next, 
\begin{equation*}
Y^i=\begin{bmatrix} X^i & X^{i-1}x\\ 0 & 0 \end{bmatrix}, \quad i\ge1.
\end{equation*}

For a matrix $A$, let $\colsp(A)$ denote the space of vectors spanned by the columns of $A$. We have 
\begin{equation*}
\eps_i=\begin{cases} 0, & X^{i-1}x\in\colsp(X^i),\\
1, & X^{i-1}x\notin\colsp(X^i), 
\end{cases}
\qquad i=1,2,\dots\,.
\end{equation*}

On the other hand, the condition $X^{i-1}x\in\colsp(X^i)$ just means $x\in V_i$. Therefore the above formula can be rewritten as 
\begin{equation*}
\eps_i=\begin{cases} 0, & x\in V_i,\\
1, & x\notin V_i, 
\end{cases}
\qquad i=1,2,\dots\,.
\end{equation*}

Given $x\in V$, let $k=k(x)$ be the smallest positive integer such that $x\in V_k$. Then we obtain
$$
\eps_i=\begin{cases} 1, & 1\le i<k,\\
0, & i\ge k.
\end{cases}
$$

From this and \eqref{eq9.A} (and taking into account the equality $\eps_0=1$) we obtain
\begin{equation*}
\la'_i-\mu'_i=\begin{cases} 1, & i=k, \\ 0, & i\ne k. \end{cases}
\end{equation*}
In words: $\la$ is obtained from $\mu$ by adding a box to the $k$-th column. 

For such $\la$, we have, by the definition of $k=k(x)$,
$$
L(\la,\mu)=\begin{cases} 
q^{\dim V_k}- q^{\dim V_{k-1}}, & k\ge2, \\
q^{\dim V_1}, & k=1.
\end{cases}
$$ 

Finally, applying Lemma \ref{lemma9.B}, we obtain the desired expression \eqref{eq4.C}.

\subsection{One more lemma}

We keep to the definitions and notation introduced in Section \ref{sect9.1}. Assume additionally that the base field is $\FF$, the $\FF$-vector space $V$ is equipped with a nondegenerate sesquilinear Hermitian form $\tau$, and $X$ is a $\tau$-Hermitian nilpotent operator on $V$.

\begin{lemma}\label{lemma9.C}
Let $k \geq 2$. Given two vectors $x,y\in V_k$, pick a vector $x'\in V$ such that $X^{k-1}x = X^kx'$ {\rm(}which is possible by the very definition of $V_k${\rm)}. 

{\rm(i)} The quantity $\tau(X^{k-1}x',y)$ does not depend on the choice of $x'$. 

{\rm(ii)} The map $\tau_k : V_k \times V_k\to\FF$ defined by
$$
\tau_k(x,y):=\tau(X^{k-2}x,y)-\tau(X^{k-1}x',y)
$$
is a Hermitian form.

{\rm(iii)} The radical $\Rad\tau_k$ of this form is equal to $V_{k-1}$.
\end{lemma}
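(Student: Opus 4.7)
The plan hinges on one reformulation. Setting $u := X^{k-2}x - X^{k-1}x'$, we have $\tau_k(x,y) = \tau(u, y)$, and since $Xu = X^{k-1}x - X^k x' = 0$, the vector $u$ lies in $\Ker X$. This single observation drives all three parts.

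For part (i), two valid choices $x'_1, x'_2$ differ by $w := x'_1 - x'_2 \in \Ker X^k$, and the resulting discrepancy is $\tau(X^{k-1}w, y)$. Using $\tau$-Hermiticity of $X^{k-1}$ together with $X^{k-1}y = X^k y''$ for some $y''$ (which holds because $y\in V_k$), this equals $\tau(w, X^k y'') = \tau(X^k w, y'') = 0$. For part (ii), well-definedness from (i) and the linearity of the auxiliary constraint on $x'$ give sesquilinearity. The Hermitian symmetry $\overline{\tau_k(y,x)} = \tau_k(x,y)$ reduces, after expanding and using Hermiticity of $\tau$ and of $X^{k-2}$, to the identity $\tau(X^{k-1}x, y') = \tau(X^{k-1}x', y)$; this in turn follows from the chain $\tau(X^{k-1}x, y') = \tau(X^k x', y') = \tau(x', X^k y') = \tau(x', X^{k-1}y) = \tau(X^{k-1}x', y)$, combining the defining relations $X^k x' = X^{k-1}x$, $X^k y' = X^{k-1}y$ with repeated $\tau$-Hermiticity of $X$.

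The substance of the lemma is part (iii), and the crucial auxiliary fact I would establish first is
\[
V_k^\perp \subseteq \Ran X^{k-1},
\]
where orthogonality is with respect to $\tau$. This follows from the obvious inclusion $\Ker X^{k-1} \subseteq V_k$ (since $0 \in \Ran X^k$) by passing to $\tau$-orthogonals, using the identity $(\Ker X^{k-1})^\perp = \Ran X^{k-1}$, which is valid because $X^{k-1}$ is $\tau$-Hermitian. Once this is in hand, the two inclusions of (iii) are immediate. If $x \in V_{k-1}$, write $X^{k-2}x = X^{k-1}x''$; then $X^{k-1}x = X^k x''$, so we may take $x' = x''$, giving $u = 0$ and hence $\tau_k(x, \ccdot) \equiv 0$. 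Conversely, if $x \in \Rad \tau_k$, then $u \in V_k^\perp \subseteq \Ran X^{k-1}$, so $X^{k-2}x = u + X^{k-1}x' \in \Ran X^{k-1}$, i.e., $x \in V_{k-1}$.

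The only genuine obstacle is the auxiliary inclusion $V_k^\perp \subseteq \Ran X^{k-1}$; everything else is formal bookkeeping with $\tau$- and $X$-Hermiticity. Happily this inclusion has a two-line derivation via $\Ker X^{k-1} \subseteq V_k$ and the Hermiticity of $X^{k-1}$, so the lemma as a whole should go through cleanly.
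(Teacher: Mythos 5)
Your proof is correct, but it takes a genuinely different route from the paper. The paper proves Lemma \ref{lemma9.C} by passing to the explicit model of $X$ from the proof of Proposition \ref{prop6.A}, reducing to the case of a single Jordan block, and then verifying the three claims by a case analysis ($k\ge p+2$, $k=p+1$, $2\le k\le p$) in coordinates adapted to that block. You instead argue invariantly: writing $\tau_k(x,y)=\tau(u,y)$ with $u=X^{k-2}x-X^{k-1}x'$, you handle (i) and (ii) by repeated use of the $\tau$-Hermiticity of powers of $X$ (your chain $\tau(X^{k-1}x,y')=\tau(X^kx',y')=\tau(x',X^ky')=\tau(x',X^{k-1}y)=\tau(X^{k-1}x',y)$ is exactly the needed symmetry identity), and you settle (iii) via the auxiliary inclusion $V_k^\perp\subseteq\Ran X^{k-1}$, obtained from $\Ker X^{k-1}\subseteq V_k$ together with $(\Ker X^{k-1})^\perp=\Ran X^{k-1}$; the latter uses the Hermiticity of $X^{k-1}$ for one inclusion and the nondegeneracy of $\tau$ for the dimension count, both of which are available here. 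The two inclusions $V_{k-1}\subseteq\Rad\tau_k$ (choose $x'$ with $X^{k-2}x=X^{k-1}x'$, so $u=0$) and $\Rad\tau_k\subseteq V_{k-1}$ (then $u\in V_k^\perp\subseteq\Ran X^{k-1}$, so $X^{k-2}x\in\Ran X^{k-1}$) are exactly as you say. What each approach buys: the paper's reduction to one Jordan block keeps the verification entirely elementary and concrete, at the cost of a basis-dependent case check; your argument is coordinate-free, needs no model basis, and makes transparent why the radical is precisely $V_{k-1}$ (it is the preimage statement $X^{k-2}x\in\Ran X^{k-1}$ read through the orthogonality relation $(\Ker X^{k-1})^\perp=\Ran X^{k-1}$). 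One small presentational remark: the observation $u\in\Ker X$, which you advertise as driving all three parts, is not actually used anywhere in your argument; what does the work is $u\in V_k^\perp$ for radical elements, so you could drop that remark without loss.
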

\begin{proof}
Consider the model of $X$ described in the proof of Proposition \ref{prop6.A}. Using it one can immediately reduce the general situation to the particular case when $X$ consists of a single Jordan block. Then we may suppose that $V$ has a basis $v_1,\dots v_p$ such that $Xv_i=v_{i-1}$ (where $v_0:=0$) and $\tau(v_i,v_j)=\de_{i+j,p+1}$. Below we write $x=(x_1,\dots,x_p)$ meaning that $x=\sum x_i v_i$. A similar notation is used for $x'$ and $y$. 

We examine separately three possible cases: 
$k\ge p+2$, $k=p+1$, and $2\le k\le p$. 

\smallskip

$\bullet$ $k\ge p+2$. Then $V_k=V_{k-1}=V$. On the other hand, $X^{k-2}x = X^{k-1}x' = 0$ for all $x, x'\in V$.
So all claims hold for trivial reasons. 

\smallskip

$\bullet$ $k=p+1$. Then $V_k=V_{p+1}=V$ and $V_{k-1}=V_p$ consists of the vectors with last coordinate $0$. Next, the condition $X^{k-1}x=X^k x'$ amounts to $X^p x=X^{p+1}x'$, which holds for any choice of $x'$ because both sides equal $0$. Thus, $x'$ may be arbitrary. On the other hand, $X^{k-1}x'=X^px'=0$, so $\tau_k(x,y)$ does not depend on $x'$ and we have
$$
\tau_k(x,y)=\tau(X^{p-1} x,y)=x_p \bar y_p.
$$
Again, all claims hold true.

\smallskip

$\bullet$ $2\le k\le p$. Then $V_k=V_{k-1}$, which means that $\tau_k$ must be identically equal to $0$. Let us take any $x, y\in V_k$ and verify that $\tau_k(x, y) = 0$. The space $V_k$ is the subspace of vectors with the last coordinates equal to $0$. Thus, $x_p=y_p=0$. Next, given $x$, we may take $x'=(0,x_1,\dots,x_{p-1})$. With this choice we have $x=Xx'$ and hence $\tau_k(x,y)=0$, as desired. Any other possible choice for $x'$ consists in adding a vector $u\in \ker X^k$.  For such a vector we have $u_{k+1}=\dots=u_p=0$. Then we have
$$
\tau(X^{k-1}u,y)=u_k \bar y_p=0,
$$
because $y_p=0$. Thus, we still get $\tau_k(x,y)=0$.   

This completes the proof.
\end{proof}

\subsection{Proof of Proposition \ref{Lmula}}

It is convenient to switch from skew-Hermitian matrices (as in the original formulation of the proposition) to Hermitian matrices. This is done simply from the relation $\Herm(N, q^2) = \eps\cdot\uu(N, q^2)$, see Section \ref{U_def}.

Let $V=\FF^N$ and $\tau: V\times V\to\FF$ be the Hermitian form given by the matrix $W=W_N$. We fix a $\tau$-Hermitian nilpotent operator $X$ on $V$, which we identify with the corresponding $N\times N$ matrix. Let  $\mu\in\Y_N$ denote the Jordan type of $X$. Next, given $x\in V$ and $z\in\F$, we form the matrix 
\begin{equation*}
Y := \begin{bmatrix} 0 & x^*W & z\\ 0 & X & x\\ 0 & 0 & 0\end{bmatrix}
\end{equation*}
and denote by $\la$ its Jordan type. Our task is  to compute the number of pairs  $(x,z)\in V\times\F$ leading to a given $\la\in\Y_{N+2}$; let us denote this quantity by $\wt L(\la,\mu)$ (instead of $\wt L^{n+1}_n(\la,\mu)$, as in the original formulation of the proposition). 

We have
\begin{equation*}
Y^i = \begin{bmatrix} 0 & x^*WX^{i-1} & z_i \\ 0 & X^i & X^{i-1}x \\  0 & 0 & 0 \end{bmatrix},\quad i\ge1,
\end{equation*}
where
\begin{equation}\label{eq9.C}
z_i=\begin{cases} z, & i=1, \\ x^* WX^{i-2}x, & i\ge2.
\end{cases}
\end{equation}
Note that
\begin{equation}\label{eq9.E}
WX^m =(X^*)^mW, \quad m\ge0,
\end{equation}
because $X$ is $\tau$-Hermitian.

As before, we set
$$
\eps_i:=\rk Y^i- \rk X^i, \quad i\ge0,
$$
and we still have
\begin{equation}\label{eq9.G}
\la'_i-\mu'_i=\eps_{i-1}-\eps_i, \quad i\ge1.
\end{equation}

In this case, $\eps_0=2$ (not $1$ as before!) and 
$$
\rk Y^i= \rk Y^{[i]}, \qquad Y^{[i]}:=
\begin{bmatrix} x^*WX^{i-1} & z_i \\ X^i & X^{i-1}x  \end{bmatrix},\quad i\ge1.
$$
Thus,
$$
\eps_i:=\rk Y^{[i]}- \rk X^i, \quad i\ge1.
$$

Recall the notation $\colsp(A)$ for the space of column vectors spanned by the columns of a given matrix $A$. Likewise, let $\rowsp(A)$ denote the space of row vectors spanned by the rows of $A$.

Observe that 
$$
X^{i-1}x\in \colsp(X^i) \Leftrightarrow x^*WX^{i-1} \in\rowsp(X^i),
$$
because $X$ is $\tau$-Hermitian. Recall that this condition is also equivalent to $x\in V_i$, see \eqref{eq9.D}. 

Given $i\ge1$, we examine two possible cases depending on whether $x$ lies in $V_i$ or not. 

\smallskip

$\bullet$ If $x\notin V_i$, then $X^{i-1}x\notin \colsp(X^i)$ and  $x^*WX^{i-1} \notin\rowsp(X^i)$, which entails that $\eps_i=2$. 

\smallskip

$\bullet$ Suppose now that $x\in V_i$. By the definition of $V_i$, there exists a vector $x'\in V$ such that $X^i x'=X^{i-1} x$. Then we may kill the lower-right block of the matrix $Y^{[i]}$ by subtracting from the last column the linear combination of the first $N$ columns with the coefficients equal to the coordinates of $x'$. Next, we can also kill the upper-left block in a similar way. This leads us to the matrix 
$$
\wt Y^{[i]}:=\begin{bmatrix} 0 & z_i-x^*W X^{i-1}x'\\ X^i & 0 \end{bmatrix}
=\begin{bmatrix} 1 & -(x')^*W\\ 0 & 1\end{bmatrix} Y^{[i]} 
\begin{bmatrix}  1 & -x'\\ 0 & 1\end{bmatrix},
$$
which has the same rank as $Y^{[i]}$ (the second equality is verified with the use of \eqref{eq9.E}). 

This gives us:
\begin{equation}\label{eq9.F}
\eps_i=\begin{cases} 0, & z_i-x^*W X^{i-1}x'=0,\\ 1, & z_i-x^*W X^{i-1}x'\ne0. \end{cases}
\end{equation}
We also note that, by virtue of \eqref{eq9.C} and Lemma \ref{lemma9.C}, 
\begin{equation*}
z_i-x^*W X^{i-1}x'=x^*W X^{i-2}x-x^*W X^{i-1}x'=\tau_i(x,x), \quad i\ge2.
\end{equation*}

\medskip

Now we are in a position to describe the possible form of $\la$ and compute the desired quantity $\wt L(\la,\mu)$. 

As before, we denote by $k=k(x)$ the least positive integer such that $x\in V_k$. Since $V_i=V$ for all $i\ge \mu_1+1$, we have $1\le k\le \mu_i+1$. We examine separately two cases linked to the alternative in \eqref{eq9.F}. Then each of them is  subdivided into two cases depending on whether $k\ge2$ or $k=1$.  

\smallskip

Case (1A): $k\ge2$ and $\tau_k(x,x)=0$. We claim that in this case
\begin{equation*}
\eps_i=\begin{cases} 2, & 0\le i\le k-1, \\0, & i\ge k.
\end{cases}
\end{equation*}
Indeed, we always have $\eps_0=2$. If $1\le i\le k-1$, then  $x\notin V_i$, whence $\eps_i=2$. For $i=k$ we have $\eps_k=0$, because $x\in V_k\setminus V_{k-1}$ and $\tau_k(x, x)=0$ by the assumption. For $i\ge k+1$ the same result holds because we still have  $x\in V_i$ and $\tau_i(x,x)=0$, where the latter equality holds for the reason that $x\in V_{i-1}$ and $V_{i-1}$ is the radical of $\tau_i$ (Lemma \ref{lemma9.C}). 

\smallskip

Case (1B): $k=1$ and $z-x^*Wx'=0$. (Here we assume that $x'$ is chosen in advance and is fixed.) We claim that in this case
\begin{equation*}
\eps_i=\begin{cases} 2, & i=0, \\0, & i\ge 1.
\end{cases}
\end{equation*}
The proof is as the same as in the previous case, the only difference is that the role of the quantity $\tau_1(x,x)$ (which is not defined) is played by $z_1-x^*Wx'=z-x^*Wx'$.

\smallskip

Case (2A): $k\ge2$ and $\tau_k(x,x)\ne0$. We claim that in this case
\begin{equation*}
\eps_i=\begin{cases} 2, & 0\le i\le k-1, \\1, & i=k, \\ 0, & i\ge k+1.
\end{cases}
\end{equation*}
Here we argue again as in Case (1A); the only difference arises for $i=k$ because now $\tau_k(x,x)\ne0$.

\smallskip

Case (2B): $k=1$ and $z-x^*Wx'\ne0$. (Again, $x'$ is fixed in advance.) We claim that in this case
\begin{equation*}
\eps_i=\begin{cases} 2, & i=0, \\1, & i=1, \\ 0, & i\ge 2.
\end{cases}
\end{equation*}
Indeed, this is checked as in Case (1B).

\smallskip

From \eqref{eq9.G} and the above expressions for $\eps_i$ it is seen that in Case (1) (that is, (1A) and (1B)), $\la$ is obtained from $\mu$ by appending a vertical domino to the $k$th column, while in Case (2) (that is, (2A) and (2B)),  one box is appended to the $k$th column and another box is appended to the $(k+1)$th column. Thus, in both cases, $\mu\nearrow\!\nearrow\la$, as claimed in Proposition \ref{Lmula}.

We proceed to the calculation of $\wt L(\la,\mu)$. We need a notation:

\begin{definition} 
Let $E$ be a finite-dimensional vector space over $\FF$ and $\kappa: E\times E\to\FF$ a nondegenerate sesquilinear Hermitian form. Then
we set 
\begin{equation}\label{eq9.H}
c_0(E):=\#\{ x\in E : \kappa(x, x) = 0 \}, \quad c_1(E):=\#\{x\in E: \kappa(x,x)\ne0\}.
\end{equation}

\medskip

Observe that the choice of $\kappa$ does not affect the definition of $c_0(E)$ and $c_1(E)$, because all forms $\kappa$ on the vector space $E$ are all equivalent.

Note that 
\begin{equation*}
c_0(E)+c_1(E)=q^{2\dim E}.
\end{equation*}
\end{definition}

We claim that the following formulas hold:

\begin{align*}
&\text{Case (1A):} \quad \wt L(\la,\mu)=q(c_0(V_k/V_{k-1})-1)q^{2\dim V_{k-1}}, \quad k=2,3,\dots, \\
&\text{Case (1B):} \quad \wt L(\la,\mu)=q^{2 \dim V_1},\\
&\text{Case (2A):} \quad \wt L(\la,\mu)=qc_1(V_k/V_{k-1})q^{2\dim V_{k-1}}, \quad k=2,3,\dots, \\
&\text{Case (2B):} \quad \wt L(\la,\mu)=q^{2 \dim V_1} (q-1).
\end{align*}

From \eqref{eq9.H} it follows that the sum of these expressions equals $q^{2\dim V+1}$, as it should be (because this is the the total number of pairs $(x,z)\in V\times \F$). 

These formulas are derived directly from the definition of the cases. For instance, let us comment on Case (1A). Here $z\in\F$ may be arbitrary, whence the factor $q$. Next, we have to count the number of vectors $x\in V_k\setminus V_{k-1}$ with $\tau_k(x,x)=0$. Recall that $V_{k-1}$ is the radical of the form $\tau_k$, so that we may treat $\tau_k$ as a nondegenerate form on $V_k/V_{k-1}$.
The number of vectors	 $x$ with the required conditions is equal to the number of pairs $(u, v)$, where $u$ is a \emph{nonzero} vector in the quotient space $V_k/V_{k-1}$ with $\tau_k(u, u) = 0$, and $v$ is any vector in $V_{k-1}$. The number of possible vectors $u$ is $c_0(V_k/V_{k-1})-1$, whereas the number of possible vectors $v$ is $q^{2\dim V_{k-1}}$ -- the cardinality of $V_{k-1}$.
The verification of the other formulas is similar. 

To complete the computation we need the following explicit expressions for $c_0(E)$ and $c_1(E)$ (see Lemma \ref{lemma9.D} below): 
\begin{equation}\label{eq9.I}
c_0(E)=q^{2m - 1} + (-1)^{m} q^{m - 1}(q - 1), \quad 
c_1(E)=q^{2m} - c_0(E),  \quad m:=\dim E. 
\end{equation}

Finally, we need explicit expressions for the dimensions entering our formulas, and they are afforded by Lemma \ref{lemma9.B}:

\begin{equation}\label{eq9.J}
\dim(V_k/V_{k-1})=m_{k-1}, \quad \dim V_{k-1}=N-\sum_{j\ge k-1}m_j, \quad \dim V_1=N - \sum_{j\ge 1} m_j.
\end{equation}

Substituting \eqref{eq9.I} and \eqref{eq9.J} into the expressions for $\wt L(\la,\mu)$ we get the formulas of Proposition  \ref{Lmula}. It remains to check \eqref{eq9.I}.

\begin{lemma}\label{lemma9.D}
The quantities $c_0(E)$ and $c_1(E)$ defined by \eqref{eq9.H} are given by \eqref{eq9.I}.
\end{lemma}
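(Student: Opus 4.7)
The plan is to reduce everything to a counting problem on the norm map. Since all nondegenerate sesquilinear Hermitian forms on $E$ are equivalent, I would pass to the diagonal form $\kappa(x,y) = \sum_{i=1}^m x_i\overline{y_i}$, so that $\kappa(x,x) = \sum_{i=1}^m N(x_i)$, where $N\colon\FF\to\F$, $N(z)=z\bar z = z^{q+1}$, is the norm map. I would invoke the standard facts $N(0)=0$ and that $N\colon\FF^*\to\F^*$ is a surjective group homomorphism with kernel of order $q+1$; in particular, $|N^{-1}(a)|=q+1$ for each $a\in\F^*$. Writing $r_m(a) := \#\{x\in\FF^m : \kappa(x,x)=a\}$, we have $c_0(E)=r_m(0)$ and $\sum_{a\in\F}r_m(a)=q^{2m}$.

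The crucial symmetry step would be this: for $\lambda\in\FF^*$, the substitution $x\mapsto \lambda x$ scales $\kappa(x,x)$ by $N(\lambda)\in\F^*$, and since $N\colon\FF^*\twoheadrightarrow\F^*$ the multiplicative group $\F^*$ acts transitively on itself through this substitution. Hence $r_m(a)$ is independent of $a\in\F^*$; denote this common value $r_m^*$. The total count then gives
$$r_m(0) + (q-1)\,r_m^* = q^{2m}.$$
Next I would derive a one-step recursion for $r_{m+1}(0)$ by splitting on the last coordinate in $\sum_{i=1}^{m+1}N(x_i)=0$. The choice $x_{m+1}=0$ contributes $r_m(0)$; each of the $q^2-1$ nonzero choices of $x_{m+1}$ forces $\kappa(x',x') = -N(x_{m+1})\in\F^*$ and contributes $r_m^*$. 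Substituting $r_m^* = (q^{2m}-r_m(0))/(q-1)$ yields
$$r_{m+1}(0) = r_m(0) + (q^2-1)\,r_m^* = -q\,r_m(0) + (q+1)\,q^{2m},$$
with initial datum $r_1(0)=1$.

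The remaining step is purely mechanical: verify that the proposed closed form $r_m(0)=q^{2m-1}+(-1)^m q^{m-1}(q-1)$ solves this recursion. Plugging it in:
$$-q\bigl[q^{2m-1}+(-1)^m q^{m-1}(q-1)\bigr] + (q+1)q^{2m} = q^{2m+1} + (-1)^{m+1}q^{m}(q-1),$$
which is exactly the formula at level $m+1$, and the initial condition $q-(q-1)=1$ checks out. The formula $c_1(E) = q^{2m}-c_0(E)$ is then immediate from the total count. There is no substantive obstacle; the one step that requires care is the scaling symmetry argument for constancy of $r_m(a)$ on $\F^*$, which rests on surjectivity of the norm.
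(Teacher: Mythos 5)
Your proof is correct and follows essentially the same route as the paper: diagonalize the form, set up a one-coordinate recursion in $m$ using the fact that the nonzero fibers of the norm map $N:\FF\to\F$ have size $q+1$, and check that the closed formula satisfies the recursion with the right initial value $c_0(1)=1$. The only cosmetic difference is that you condition on the last coordinate, which is why you need the extra (correct) scaling argument that $r_m(a)$ is constant on $a\in\F^*$, whereas the paper conditions on the value of the partial sum $a=x_1\overline{x_1}+\dots+x_{m-1}\overline{x_{m-1}}$ and uses only the aggregate count $c_1(m-1)$, arriving at the equivalent recurrence $c_0(m)=c_0(m-1)+(q+1)\,c_1(m-1)$.
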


\begin{proof}
Write $c_0(m)$ and $c_1(m)$ instead of $c_0(E)$ and $c_1(E)$. The equality $c_1(m)=q^{2m}-c_0(m)$ is evident. Next, $c_0(m)$ equals the number of solutions in $\FF^m$ of the equation
$$
x_1\overline{x_1}+\dots+ x_m\overline{x_m}=0.
$$
Clearly, $c_0(1)=1$, which agrees with \eqref{eq9.I}. Next, for $m\ge2$ we set
$$
a:=x_1\overline{x_1}+\dots+ x_{m-1}\overline{x_{m-1}}
$$
and observe that the equation $x_m\overline{x_m}=-a$ has a single solution $x_m=0$ for $a=0$, and $q+1$ solutions for every $a\in\F\setminus\{0\}$. This entails the recurrence relation 
\begin{equation*}
c_0(m)=c_0(m-1)+c_1(m-1)(q+1), \quad m=2,3,\dots\,.
\end{equation*}
On  the other hand, \eqref{eq9.I} satisfies this recurrence, which proves the lemma.
\end{proof}

This completes the proof of Proposition \ref{Lmula}.

\section{Generalized spherical representations}\label{sect10}

The contents of this section are not used in the body of the paper, so we omit detailed proofs. Our purpose here is to explain a representation-theoretic meaning of coadjoint-invariant Radon measures.

Let $V$ be a locally compact Abelian (LCA) group, and $\wh V$ its Pontryagin dual (the group of characters). Next, let $\mathcal K$ be a compact group acting on $V$ by automorphisms. Then it also acts by automorphisms on $\wh V$, and we set $\mathcal G:=\mathcal K\ltimes \wh V$ (the semidirect product). By a \emph{spherical representation} of $(\mathcal G,\mathcal K)$ we mean a pair $(T,\xi)$, where $T$ is a unitary representation of the group $\mathcal G$ on a Hilbert space $H=H(T)$ and $\xi\in H$ is a distinguished cyclic $\mathcal K$-invariant vector. A well-known result due to Mackey says that there is a one-to-one correspondence $P\leftrightarrow (T,\xi)$ between finite $\mathcal K$-invariant Borel measures $P$ on $V$ and (equivalence classes of) spherical representations $(T,\xi)$ of $(\mathcal G,\mathcal K)$. 

This correspondence looks as follows. Given $P$, we form the Hilbert space $H:=L^2(V, P)$. The distinguished vector $\xi$ is the constant function equal to $1$. The representation $T$ is defined by setting for $\eta\in H$ and $v\in V$
\begin{equation}\label{eq1}
(T(k) \eta)(v) := \eta(k^{-1}\cdot v), \qquad  k\in \mathcal K,
\end{equation}
and
\begin{equation}\label{eq2}
(T(\wh v)\eta)(v) := \chi(\wh v, v), \qquad \wh v\in \wh V,
\end{equation}
where $\chi$ denotes the canonical pairing between $\wh V$ and $V$ (a bicharacter of $\wh V\times V$). Then $(T,\xi)$ is a spherical representation, and every spherical representation is obtained in this way from a (unique) measure $P$. 

We are going to extend this correspondence to the case when $\mathcal K$ is no longer compact and the measures $P$ are allowed to be infinite. On the other hand, we impose restrictions on  $V$. 

We need a few definitions. By a \emph{lattice} in a LCA group $V$ we mean an open compact subgroup; we denote by $\Lat(V)$ the set of all lattices in $V$. Next, if $X$ is a closed subgroup of a LCA group $V$, then we will denote by $X^\perp$ the subgroup of $\wh V$ formed by the characters that identically equal $1$ on $X$. Note that if $X\in\Lat(V)$, then $X^\perp\in\Lat(\wh V)$. 

We are interested in LCA groups $V$ satisfying the following condition:

(*) There exists a two-sided infinite sequence of nested lattices $\{X_n: n\in\Z\}$ such that 
$$
\cdots\supset X_n\supset X_{n+1}\supset\cdots, \qquad \bigcap X_n=\{0\}, \qquad \bigcup X_n=V.
$$
Note that if $V$ satisfies (*), then so does $\wh V$; for the corresponding chain of lattices, one can take $Y_n:=X_{-n}^\perp$. 

In what follows, we assume that $V$ satisfies (*). A basic example is the additive group of a local non-Archimedean field $\F((z))$, where $\F$ is a finite field. 

We denote by $S(V)$ the space of \emph{Schwartz--Bruhat functions} --- these are the complex valued functions on $V$ that are compactly supported and locally constant. Let also $S[\wh V]$ be the space of complex measures on $V$ which are absolutely continuous with respect to the Haar measure and whose densities are Schwartz--Bruhat functions. Both $S(V)$ and $S[\wh V]$ are commutative $*$-algebras, with respect to pointwise multiplication and convolution product, respectively. 

The Fourier transform establishes an algebra isomorphism between $S(V)$ and $S[\wh V]$. From this it is easy to obtain a version of Bochner's theorem: 

\begin{proposition}\label{prop1}
The Fourier transform establishes a bijection $P\leftrightarrow \varphi$ between Radon measures $P$ on $V$ and positive functionals $\varphi$ on the $*$-algebra $S[\wh V]$.  
\end{proposition}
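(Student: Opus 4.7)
The plan is to obtain the bijection by transporting structure through the Fourier isomorphism $\mathcal{F}: S(V) \stackrel{\cong}{\to} S[\wh V]$ of $*$-algebras, and then identifying positive functionals on $S(V)$ with Radon measures on $V$ directly.

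First, I would translate the positivity condition across $\mathcal{F}$. Under the Fourier isomorphism, the natural involution on $S[\wh V]$ pulls back to complex conjugation on $S(V)$, and the convolution product on $S[\wh V]$ pulls back to pointwise multiplication. Thus a positive functional $\varphi$ on $S[\wh V]$ corresponds to a linear functional $L$ on $S(V)$ satisfying $L(\overline f \cdot f) = L(|f|^2)\ge0$ for every $f\in S(V)$. Since any nonnegative $g\in S(V)$ is locally constant and compactly supported, its pointwise square root $\sqrt g$ is again in $S(V)$ and $g=|\sqrt g|^2$. Hence the positive functionals on $S[\wh V]$ are in one-to-one correspondence with the linear functionals $L:S(V)\to\C$ which are nonnegative on the cone of nonnegative Schwartz--Bruhat functions.

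Second, I would show that such $L$ are in bijection with Radon measures $P$ on $V$. In one direction, $P\in\Lat$-admissible Radon measures defines $L_P(f):=\int_V f\, dP$, which is well-defined because $f$ has compact support (so the integral is finite) and is visibly nonnegative on nonnegative $f$. In the other direction, given $L$, fix the chain $\{X_n\}$ provided by condition $(*)$. For every clopen compact $U\subset V$ the indicator $\mathbf 1_U$ lies in $S(V)$, so setting $P(U):=L(\mathbf 1_U)$ produces a nonnegative, finitely additive set function on the ring of clopen subsets of each $X_n$, with total mass $L(\mathbf 1_{X_n})<\infty$. Because $X_n$ is compact, any disjoint decomposition of a clopen subset into clopen subsets is finite, so the hypothesis of the Carathéodory extension theorem is automatic; arguing exactly as in Lemma~\ref{lemma3.A} produces a Borel measure on each $X_n$, and these patch to a Radon measure on $V$ since any compact subset of $V$ is contained in some $X_n$. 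By linearity one checks $L(f)=\int_V f\, dP$ first on indicators of clopen compacts, then on all of $S(V)$.

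Combining the two steps gives the desired bijection $P\leftrightarrow\varphi$ characterized by $\varphi(\mathcal F(f))=\int_V f\, dP$. The main technical point is the countable additivity in step two, but it requires no new idea beyond the compactness argument already used in Lemma~\ref{lemma3.A}; everything else is a transparent transfer of structure along Fourier duality.
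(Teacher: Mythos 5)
Your proposal is correct and follows exactly the route the paper indicates (the paper only sketches this: it notes that the Fourier transform is a $*$-algebra isomorphism $S(V)\to S[\wh V]$ and that the proposition is then an easy Bochner-type statement). Your two steps --- transporting positivity via square roots of nonnegative locally constant functions, and identifying positive functionals on $S(V)$ with Radon measures by the Carath\'eodory argument of Lemma~\ref{lemma3.A} together with condition $(*)$ --- are a sound filling-in of the details the paper omits.
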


Let us emphasize that it is the Radon condition that allows one to define the Fourier transform of $P$.

Suppose now that $T$ is a continuous unitary representation of the group $\wh V$ on a Hilbert space $H$. We are going to define the \emph{Gelfand triple} 
$$
H_\infty\subset H\subset H^\infty.
$$
For $X\in\Lat(\wh V)$, let $H^X$ denote the subspace of $X$-invariant vectors in $H$. By definition,
$$
H_\infty:=\bigcup_{X\in\Lat(\wh V)} H^X
$$
(the subspace of \emph{smooth vectors}). It is an inductive limit space, $H_\infty=\varinjlim H^X$, with respect to natural embeddings $H^X\to H^Y$ corresponding to pairs of lattices $X\supset Y$. We equip $H_\infty$ with the inductive limit topology, where each $H^X$ is endowed with its usual norm topology. Next, $H^\infty$ is defined as the space of continuous antilinear functionals on $H_\infty$; it is nothing else than the projective limit $\varprojlim H^X$, where, for each pair $X\supset Y$, the arrow $H^X\leftarrow H^Y$ is the natural projection. Elements of $H^\infty$ are called \emph{generalized vectors}.  

The representation $T$ gives rise to a $*$-representation of the algebra $S[\wh V]$ on $H$, which preserves the subspace $H_\infty\subset H$ and extends to the space $H^\infty$. Furthermore, if $\xi\in H^\infty$ and $f\in S[\wh V]$, then $T(f)\xi\in H_\infty$. A generalized vector $\xi$ is said to be \emph{cyclic} if the subspace $\{T(f)\xi: f\in S[\wh V]\}$ is dense in $H$.  

\begin{proposition}\label{prop2}
There exists a natural bijective correspondence $P\leftrightarrow (T,\xi)$ between Radon measures $P$ on $V$ and (equivalence classes of) pairs $(T,\xi)$, where $T$ is a unitary representation of $\wh V$ and $\xi$ is a distinguished generalized cyclic vector.
\end{proposition}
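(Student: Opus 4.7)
The plan is to factor the desired correspondence through Proposition \ref{prop1}, which already identifies Radon measures on $V$ with positive functionals on the $*$-algebra $S[\wh V]$. It therefore suffices to establish a natural bijection between such positive functionals and equivalence classes of pairs $(T,\xi)$, via a GNS-style construction adapted to the Gelfand-triple framework of the section.

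Starting from a Radon measure $P$, I would take $H:=L^2(V,P)$ with the multiplication representation $(T(\wh v)\eta)(v):=\chi(\wh v,v)\eta(v)$. For any lattice $X\in\Lat(\wh V)$, the annihilator $X^\perp$ is a lattice in $V$, hence compact, so $P(X^\perp)<\infty$, and a direct computation identifies $H^X$ with $L^2(X^\perp,P|_{X^\perp})$. The ``constant function $1$'' on $V$ need not lie in $H$, but the family $\xi_X:=1_{X^\perp}\in H^X$ is compatible with the orthogonal projections $H^Y\twoheadrightarrow H^X$ for $X\supset Y$ (namely, restriction from $Y^\perp$ to $X^\perp\subset Y^\perp$), and thus defines an element $\xi\in H^\infty=\varprojlim H^X$. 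A Fourier-inversion calculation shows that $T(f)\xi$ coincides with $f$ itself, viewed as an element of $S(V)\subset H_\infty$, and cyclicity follows from density of Schwartz--Bruhat functions in $L^2(V,P)$.

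In the reverse direction, from an abstract pair $(T,\xi)$ I would define $\vp(f)$ by pairing $T(f)\xi\in H_\infty$ against $\xi\in H^\infty$: choose any $Y\in\Lat(\wh V)$ for which $T(f)\xi\in H^Y$ and set $\vp(f):=\langle T(f)\xi,\xi_Y\rangle_H$, which is independent of $Y$ by the projective compatibility of the family $(\xi_X)$. Positivity is the $*$-representation identity $\vp(f^*\ast f)=\|T(f)\xi\|_H^2\ge 0$, after which Proposition \ref{prop1} supplies the Radon measure $P$. Mutual inverseness of the two constructions is a direct unwinding of definitions in one direction, and a standard GNS uniqueness argument in the other: cyclicity of $\xi$ produces a unitary intertwiner $T(f)\xi\mapsto f$ from $H$ onto $L^2(V,P)$ that carries the abstract generalized vector $\xi$ to the canonical ``constant-function-$1$'' generalized vector just constructed.

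The one genuine obstacle is the precise interpretation of $\langle T(f)\xi,\xi\rangle$ when $\xi$ lies in $H^\infty$ rather than $H$; the key enabling fact, already noted in the section, is that $T(f)$ maps $H^\infty$ into $H_\infty$, making the pairing well-defined via the natural duality $H^\infty=(H_\infty)'$. Everything else is a routine extension of the classical Mackey/GNS correspondence from finite measures and genuine cyclic vectors to the present setting of Radon measures of possibly infinite mass together with cyclic \emph{generalized} vectors.
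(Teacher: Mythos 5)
Your proposal is correct and follows essentially the same route as the paper's (sketched) proof: from $P$ you build $H=L^2(V,P)$ with the multiplication representation and the constant function $1$ as a generalized cyclic vector, and conversely you read off the positive functional $f\mapsto\langle T(f)\xi,\xi\rangle$ on $S[\wh V]$ and invoke Proposition \ref{prop1}. The extra details you supply (the projective family $1_{X^\perp}$ realizing $\xi\in H^\infty$, positivity via $\|T(f)\xi\|^2$, and the GNS-type uniqueness argument) are exactly the points the paper leaves implicit.
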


\begin{proof}[Sketch of proof]
Given $P$, the corresponding pair $(T,\xi)$ is constructed as follows. The Hilbert space $H$ is $L^2(V,P)$. The operator $T(\wh v)$ on $H$ is defined by \eqref{eq2}. The subspace $H_\infty\subset H$ consists of compactly supported $L^2$-functions. The space $H^\infty\supset H$ is formed by the locally-$L^2$-functions. The distinguished vector $\xi$ is the constant function $1$. 

Conversely, given $(T,\xi)$, the matrix element
$$
\varphi(\wh v):=(T(\wh v)\xi,\xi)
$$
is well defined,  and it is a positive functional on $S[\wh V]$. Then we use the correspondnce $\varphi\mapsto P$ from Proposition \ref{prop1}. 
\end{proof}

Suppose now that there is a group $\mathcal K$ acting on $V$ (and hence on $\wh V$) by automorphisms. Then we may form the semidirect product $\mathcal G:=\mathcal K\ltimes\wh V$.  If $T$ is a unitary representation of $\mathcal G$ on a Hilbert space $H$, then the action of $\mathcal K$ preserves the  Gelfand triple associated with the restriction of $T$ to $\wh V$. By a \emph{generalized spherical representation} of $(\mathcal G,\mathcal K)$ we mean a pair $(T,\xi)$, where $T$ is as above and $\xi\in H^\infty$ is a generalized $\mathcal K$-invariant cyclic vector $\xi$. 

Denote by $\mathcal P(V)^{\mathcal K}$ the set of $K$-invariant Radom measures on $V$. Given $P\in \mathcal P(V)^{\mathcal K}$, we assign to it a generalized spherical representation of the group $\mathcal G$ by formulas \eqref{eq1} and \eqref{eq2}; the distinguished vector $\xi$ being, as before, the constant function $1$. 

\begin{proposition}\label{prop3}
The correspondence $P\mapsto (T,\xi)$ just described establishes a bijection between the set $\mathcal P(V)^{\mathcal K}$ and the set of equivalence classes of generalized spherical representations of $(\mathcal G,\mathcal K)$. Under this correspondence, $T$ is irreducible if and only if $P$ is ergodic.
\end{proposition}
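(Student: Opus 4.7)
The strategy is to reduce Proposition \ref{prop3} to Proposition \ref{prop2} by viewing the $\mathcal K$-equivariance as an additional layer of structure on top of the bijection already established for the group $\wh V$ alone. The verification then splits into three blocks: well-definedness in both directions, compatibility with equivalence, and the irreducibility/ergodicity correspondence.

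First I would check the forward direction. Given $P \in \mathcal{P}(V)^{\mathcal K}$, formulas \eqref{eq1} and \eqref{eq2} each define unitary operators on $H = L^2(V, P)$ (the $\mathcal K$-invariance of $P$ is needed for \eqref{eq1} to be isometric), and their compatibility reduces to the identity $T(k) T(\wh v) T(k)^{-1} = T(k \cdot \wh v)$, which follows from $\chi(k \cdot \wh v, v) = \chi(\wh v, k^{-1} \cdot v)$, i.e.\ from the duality between the two $\mathcal K$-actions on $V$ and $\wh V$. The constant function $\xi \equiv 1$ is visibly $\mathcal K$-invariant under \eqref{eq1}, and its cyclicity as a generalized vector for $T|_{\wh V}$ is already part of Proposition \ref{prop2}. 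Hence $(T,\xi)$ is a generalized spherical representation.

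Conversely, given a generalized spherical pair $(T, \xi)$, restrict $T$ to $\wh V$. Proposition \ref{prop2} produces a unique Radon measure $P$ on $V$ such that $(T|_{\wh V}, \xi)$ is equivalent to the canonical model on $L^2(V, P)$ with $\xi$ the constant function $1$. I would then argue that $P$ is $\mathcal K$-invariant. Fix $k \in \mathcal K$ and consider the pushforward $k_* P$. The pair obtained by conjugating the canonical model by $T(k)$ corresponds, by the uniqueness part of Proposition \ref{prop2}, to the measure $k_* P$; but on the other hand this conjugated pair has distinguished vector $T(k) \xi = \xi$ (by $\mathcal K$-invariance of $\xi$) and, via the covariance relation above, realizes the same representation of $\wh V$ on the same Hilbert space. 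Uniqueness in Proposition \ref{prop2} then forces $k_* P = P$. The roundtrip identities $P \mapsto (T,\xi) \mapsto P$ and $(T,\xi) \mapsto P \mapsto (T,\xi)$ (up to equivalence) are inherited from Proposition \ref{prop2}.

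For the irreducibility/ergodicity equivalence, work in the concrete model $H = L^2(V,P)$. By a standard spectral-theoretic argument for the action of the LCA group $\wh V$ on $H$ via the multiplication operators \eqref{eq2}, the commutant of $T(\wh V)$ in $\mathcal B(H)$ coincides with the algebra $L^\infty(V, P)$ acting by pointwise multiplication; the key input here is that the characters $\chi(\wh v, \cdot)$, with $\wh v \in \wh V$, separate points of $V$ and generate, in an appropriate $w^*$-sense, all of $L^\infty(V,P)$. Intersecting with the commutant of $T(\mathcal K)$ (which acts by \eqref{eq1}) gives $L^\infty(V, P)^{\mathcal K}$. By Schur's lemma, $T$ is irreducible iff this commutant reduces to scalars, i.e.\ iff every bounded $\mathcal K$-invariant measurable function on $V$ is $P$-a.e.\ constant, which is precisely the ergodicity of $P$ with respect to $\mathcal K$.

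The main obstacle I anticipate is not any single step but rather the careful handling of the generalized vector $\xi \in H^\infty$ and the meaning of its $\mathcal K$-invariance: $\mathcal K$ typically does not preserve any individual $H^X$ but permutes the lattices $X \in \Lat(\wh V)$, so one must check that the $\mathcal K$-action on $H_\infty = \varinjlim H^X$ extends (by duality) to $H^\infty = \varprojlim H^X$ coherently, and that pairings $\langle \xi, T(f)\eta \rangle$ transform correctly under $k \in \mathcal K$ for $f \in S[\wh V]$ and $\eta \in H_\infty$. Once this functorial compatibility is nailed down, the argument above becomes essentially formal.
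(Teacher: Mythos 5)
Your proposal is correct and follows what is evidently the intended argument: the paper deliberately omits detailed proofs in Section \ref{sect10}, and your route --- reduce to Proposition \ref{prop2} for the restriction to $\wh V$, deduce $\mathcal K$-invariance of $P$ from invariance of $\xi$ plus uniqueness, and identify the commutant of $T(\wh V)$ with $L^\infty(V,P)$ so that irreducibility of $T$ becomes triviality of $L^\infty(V,P)^{\mathcal K}$, i.e.\ ergodicity --- is the standard Mackey-type argument this section is modeled on. The only step worth writing out explicitly is that the intertwiner $U$ supplied by Proposition \ref{prop2} automatically intertwines the full $\mathcal G$-actions (so the round trip works on equivalence classes): the operator $U T(k) U^{-1} T_P(k)^{-1}$ commutes with $T_P(\wh V)$, hence is multiplication by a unimodular function by your commutant identification, and it fixes the generalized vector $1$, hence equals the identity --- so this uses exactly the ingredients you already have and leaves no genuine gap.
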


For instance, one may take (using the notation of section \ref{sect1.1}) $V=\bar\g^*$, $\wh V=\bar\g$, $\mathcal K=\overline G$. 
Then Proposition \ref{prop3} says that coadjoint-invariant Radon measures give rise to generalized spherical representations of the pair $(\overline G\ltimes\bar\g, \overline G)$ (cf. \cite[Sect. 5.1]{GKV}), and ergodic measures produce irreducible representations.

\bigskip

Cesar Cuenca:

\smallskip

$^1$Department of Mathematics, Harvard University, Cambridge, MA, USA.

\smallskip

Email address: cesar.a.cuenk@gmail.com

\bigskip

Grigori Olshanski:

\smallskip

$^2$Institute for Information Transmission Problems, Moscow, Russia;

\smallskip

$^3$Skolkovo Institute of Science and Technology, Moscow, Russia;

\smallskip

$^4$HSE University, Moscow, Russia.

\smallskip

Email address: olsh2007@gmail.com


\begin{thebibliography}{9}

\bibitem{SAG}
A. Borel, R. Carter, C. W. Curtis, N. Iwahori, T. A. Springer, R. Steinberg. Seminar on Algebraic Groups and Related Finite Groups. Lecture Notes in Mathematics 131. Springer-Verlag, 1970. 

\bibitem{B}
A. M. Borodin. The law of large numbers and the central limit theorem for the Jordan normal form of large triangular matrices over a finite field. Journal of Mathematical Science (New York) 96, Issue 5 (1999), 3455--3471. 

\bibitem{BorC}
A. Borodin and I. Corwin. Macdonald processes. Probability Theory and Related Fields 158, no. 1-2 (2014), 225--400.

\bibitem{BC0}
R. C. Bose and I. M. Chakravarti. Hermitian varieties in a finite projective space PG$(N, q^2)$. Canadian Journal of Mathematics 18 (1966), 1161--1182.

\bibitem{BP}
A. Bufetov and L. Petrov. Law of Large Numbers for infinite random matrices over a finite field. Selecta Math. New Ser. 21 (2015), 1271--1338.

\bibitem{BC}
N. Burgoyne and R. Cushman. Conjugacy classes in linear groups. J. Algebra 44 (1977), Issue 2, 339--362.

\bibitem{E1}
V. Ennola. On the conjugacy classes of the finite unitary groups. Ann. Acad. Sci. Fenn. Ser. A I, No. 313 (1962), 13 pages.

\bibitem{E2}
V. Ennola. On the characters of the finite unitary groups, Ann. Acad. Sci. Fenn. Ser. A I, No. 323 (1963), 35 pages.

\bibitem{F-2002}
J. Fulman. Random matrix theory over finite fields. Bull. Amer. Math. Soc. 39 (2002), 51--85.

\bibitem{Gloeckner}
H. Gl\"ockner. Fundamentals of direct limit Lie theory. Compositio Math. 141 (2005), 1551--1577.

\bibitem{GO}
A. Gnedin and G. Olshanski. Coherent permutations with descent statistic and the boundary problem for the graph of zigzag diagrams. International Mathematics Research Notices 2006 (2006), 51968--51968.

\bibitem{GKV}
V. Gorin, S. Kerov and A. Vershik. Finite traces and representations of the group of infinite matrices over a finite field. Advances in Math. 254 (2014), 331--395.

\bibitem{Ke1}
S. V. Kerov. Generalized Hall-Littlewood symmetric functions and orthogonal polynomials. Representation theory and dynamical systems, Adv. Soviet Math., 9, Amer. Math. Soc., Providence, RI (1992), 67--94.

\bibitem{Ke2}
S. V. Kerov. Asymptotic representation theory of the symmetric group and its applications in analysis. Translated from the Russian manuscript by N. V. Tsilevich. Translations of Mathematical Monographs, 219. American Mathematical Society, Providence, RI (2003).

\bibitem{K}
A. A. Kirillov. Variations on the triangular theme. In: Lie Groups and Lie Algebras: E. B. Dynkin's Seminar. (Amer. Math. Soc. Transl. Ser. 2, vol. 169). AMS,1995, 43--73.

\bibitem{K2}
A. A. Kirillov. Merits and demerits of the orbit method. Bulletin of the American Mathematical Society 36, no. 4 (1999), 433--488.

\bibitem{Lehrer}
G. I. Lehrer. Fourier transforms, nilpotent orbits, Hall polynomials and Green functions. In: Finite Reductive Groups:
Related Structures and Representations. Proceedings of an International Conference held in Luminy, France (M. Cabanes, ed.) Birkh\"auser Boston, 1997, pp. 291--309.

\bibitem{Letellier}
E. Letellier. Fourier Transforms of Invariant Functions on Finite Reductive Lie Algebras. Lecture Notes in Mathematics 1859. Springer Verlag 2005. 


\bibitem{Lewis}
D. W. Lewis. The isometry classification of Hermitian forms over division algebras. Linear Algebra Appl. 43 (1982), 245--272.

\bibitem{Mac}
I. G. Macdonald. Symmetric functions and Hall polynomials. Oxford University Press, 1995.

\bibitem{Mat}
K. Matveev. Macdonald-positive specializations of the algebra of symmetric functions: Proof of the Kerov conjecture. Annals of Mathematics 189, Issue 1 (2019), 277--316.

\bibitem{Ols-2003a}
G. Olshanski. The problem of harmonic analysis on the infinite-dimensional unitary group. J. Funct. Anal. 205 (2003), 464--524.

\bibitem{Phelps}
R. R. Phelps. Lectures on Choquet's theorem. Springer Science \& Business Media, 2001.

\bibitem{Springer}
T. A. Springer. Generalization of Green's polynomials. In: Representation theory of finite groups and related topics. Proceedings of Symposia in Pure Mathematics 21. Amer. Math. Soc. 1971, pp.  148--153.

\bibitem{StF}
R. P. Stanley. Enumerative Combinatorics, Volume 2. Cambridge Studies in Advanced Mathematics 62. Cambridge University Press, 1999.

\bibitem{TV}
N. Thiem and C. R. Vinroot. On the characteristic map of finite unitary groups. Advances in Mathematics 210, issue 2 (2007), 707--732.

\bibitem{VK-1990}
A. M. Vershik and S. V. Kerov. The Grothendieck group of infinite symmetric group and symmetric functions (with the elements of $K_0$-functor of AF-algebras). In: A. M. Vershik and D. P. Zhelobenko (eds). Representation of Lie groups and related topics. Bordon and Breach, 1990, pp. 39--117.

\bibitem{VK-1998}
A. M. Vershik and S. V. Kerov. On an infinite-dimensional group over a finite field. Funct. Anal. Appl. 32 (1998), 147--152.

\bibitem{VK-2007} 
A. M. Vershik and S. V. Kerov. Four drafts on the representation theory of the group of infinite matrices over a finite field. J. Math. Sci.(New York) 147 (6) (2007), 7129--7144; arXiv:0705.3605.

\bibitem{Wa}
G. E. Wall. On conjugacy classes in the unitary, symplectic, and orthogonal groups, Journal of the Australian Mathematical Society 3 (1963), 1--63.

\end{thebibliography}
\end{document}